\documentclass[a4paper,11pt,leqno]{article}
\usepackage{amsmath,amsfonts,amsthm,amssymb}
\usepackage[left=3cm, right=3cm, top=3cm, bottom=3.5cm]{geometry}
\usepackage{graphicx}
\usepackage{tikz}
\numberwithin{equation}{section}

\theoremstyle{plain}
\newtheorem{theorem}{Theorem}[section]

\newtheorem{lemma}[theorem]{Lemma}
\newtheorem{proposition}[theorem]{Proposition}
\newtheorem{corollary}[theorem]{Corollary}

\theoremstyle{remark}
\newtheorem{remark}{Remark}

\theoremstyle{definition}

\newcommand{\R}{\mathbb{R}}

\begin{document}

\title{Existence of periodic orbits near heteroclinic connections}
\author{{Giorgio
    Fusco,\footnote{Universit\`a dell'Aquila, ; e-mail:
      {\texttt{fusco@univaq.it}}}}\ \
Giovanni F. Gronchi,\footnote{Dipartimento di Matematica, Universit\`a
  degli Studi di Pisa, Largo B. Pontecorvo 5, Pisa, Italy; e-mail:
  {\texttt{giovanni.federico.gronchi@unipi.it}}}\ \
Matteo Novaga\footnote{Dipartimento di Matematica, Universit\`a degli
  Studi di Pisa, Largo B. Pontecorvo 5, Pisa, Italy; e-mail:
  {\texttt{matteo.novaga@unipi.it}}} }
\date{}
\maketitle

\begin{abstract}
We consider a potential $W:\R^m\rightarrow\R$ with two different
global minima $a_-, a_+$ and, under a symmetry assumption, we use a
variational approach to show that the Hamiltonian system
\begin{equation}\ddot{u}=W_u(u),
\label{*}
\end{equation}
has a family of $T$-periodic solutions $u^T$ which, along
a sequence $T_j\rightarrow+\infty$, converges locally to a heteroclinic
solution that connects $a_-$ to $a_+$. We then focus on the elliptic
system
\begin{equation}
\Delta u=W_u(u),\;\; u:\R^2\rightarrow\R^m,
\label{**}
\end{equation}
that we interpret as an infinite dimensional analogous of \eqref{*},
where $x$ plays the role of time and $W$ is replaced by the action
functional $J_\R(u)=\int_\R(\frac{1}{2}\vert u_y\vert^2+W(u))dy$. We
assume that $J_\R$ has two different global minimizers $\bar{u}_-,
\bar{u}_+:\R\rightarrow\R^m$ in the set of maps that connect $a_-$ to
$a_+$. We work in a symmetric context and prove, via a minimization
procedure, that \eqref{**} has a family of solutions
$u^L:\R^2\rightarrow\R^m$, which is $L$-periodic in $x$,
converges to $a_\pm$ as $y\rightarrow\pm\infty$ and, along a sequence
$L_j\rightarrow+\infty$, converges locally to a heteroclinic solution
that connects $\bar{u}_-$ to $\bar{u}_+$.
\end{abstract}

\tableofcontents

\section{Introduction}
The dynamics of the Newton equation
\begin{equation}
\ddot{u}=W^\prime(u),\qquad \;W(u)=\frac{1}{4}(1-u^2)^2,
\label{equation}
\end{equation}
includes a heteroclinic solution $u^H:\R\rightarrow\R$ that connects
$-1$ to $1$:
\[
 \lim_{t\rightarrow\pm\infty}u^H(t)=\pm 1,
\]
and a family of $T$-periodic solutions $u^T$ that, along a sequence
$T_j\rightarrow+\infty$, converges to $u^H$
\[
\lim_{T\rightarrow+\infty}u^T(t)=u^H(t),
\]
uniformly in compact intervals.

Each map $t\rightarrow u^T(t)$
 satisfies
\[
u^T\left(\frac{T}{4}-t\right)=u^T\left(\frac{T}{4}+t\right),\;\; t\in\R,
\]
and therefore oscillates twice for period on the same trajectory with
extremes at $u^T(\pm\frac{T}{4})$ where the speed
$\dot{u}^T(\pm\frac{T}{4})$ vanishes and for this reason is called a
\emph{brake orbit}. There is a large literature on brake orbits
\cite{Seifert}, \cite{R}, \cite{benci}, \cite{ZL}.

We can ask whether a similar picture holds true in the vector case
where $W:\R^m\rightarrow\R$, $m>1$ satisfies
\begin{equation}
0=W(a_\pm)<W(u),\;\;u\neq a_\pm,
\label{W}
\end{equation}
for some $a_-\neq a_+\in\R^m$, or even in the infinite dimensional
case where the potential $W$ is replaced by a functional
$J:\mathcal{H}\rightarrow\R$, where $\mathcal{H}$ is a suitable
function space, with two distinct global minima
$\bar{u}_\pm\in\mathcal{H}$ that correspond to the zeros $a_\pm$ of
$W$ in the finite dimensional case.

If we assume that $W$ is of class $C^2$ and that $a_\pm$ are non
degenerate in the sense that the Hessian matrix $W_{uu}(a_\pm)$ is
positive definite, the existence of a family of $T$-periodic brake
maps that, as $T\rightarrow+\infty$, converges to a heteroclinic
connection between $a_-$ and $a_+$ can be established by direct
minimization of the action functional
\[
 J_{(t_1^u,t_2^u)}(u)=\int_{t_1^u}^{t_2^u}\Big(\frac{1}{2}\vert\dot{u}\vert^2+W(u)\Big)ds,\qquad
 -\infty<t_1^u<t_2^u<+\infty,
\]
on a suitable set of admissible maps $u\in
H^1((t_1^u,t_2^u);\R^m)$. Indeed the non degeneracy of $a_\pm$ implies
that, for small $\delta>0$, the boundary of the set $\{u\in\R^m:
W(u)>\delta\}$ is partitioned into two compact connected subsets
$\Gamma_-$ and $\Gamma_+$ that satisfy the condition
\begin{equation}
 W_u(u)\neq 0,\;\;u\in \Gamma_\pm.
\label{condition}
\end{equation}
Then Theorem 5.5 in \cite{AS} or Corollary 1.5 in \cite{fgn} yields
the existence of a brake orbit $u^\delta$ that oscillates between
$\Gamma_-$ and $\Gamma_+$ and whose period $T_\delta$ diverges to
$+\infty$ as $\delta\rightarrow 0^+$. Even though the condition
(\ref{condition}) can be relaxed by allowing $\Gamma_\pm$ to contain
hyperbolic critical points of $W$ \cite{fgn}, the extension of this
approach to the infinite dimensional setting requires new ideas to
overcome the difficulties related to the formulation of a condition
analogous to (\ref{condition}) and to the non compactness of the
boundary of the sets $\{u\in\mathcal{H}:
J(u)-J(\bar{u}_\pm)>\delta\}$.  To avoid these pathologies the idea is
to minimize on a set of $T$-periodic maps. But we can not expect that
$u^\delta$ is a minimizer in the class of maps of period
$T=T_\delta$. Indeed, returning to the case $m=1$, we note that, as a
solution of (\ref{equation}), $u^{_T}$ is a critical point of the
action functional
\begin{equation}
J_{(0,T)}(u):=\int_0^T\Big(\frac{1}{2}\vert\dot{u}\vert^2+W(u)\Big) dt,
\label{functional}
\end{equation}
in the set of $H^1$ $T$-periodic maps but is not a minimizer. In fact
it is well known \cite{cp}, \cite{fh}, \cite{bnn} that, in the
dynamics of the scalar parabolic equation
\[
u_\tau=u_{tt}-W^\prime(u),\;\;u(t+T)=u(t),
\]
nearest layers attract each other and therefore, for large $T$,
$u^{_T}$ has Morse index $1$ in the context of periodic perturbations.

To mode out this instability we work in a symmetric context. We assume
that $W$ is invariant under a reflection $\gamma:\R^m\rightarrow\R^m$,
that is,
\begin{equation}
W(\gamma u)=W(u),\;\;u\in\R^m.
\label{Winv}
\end{equation}
In the finite dimensional case we assume that $\gamma$ exchanges $a_-$
with $a_+$:
\begin{equation}
a_\pm=\gamma\, a_\mp,
\label{gammaa}
\end{equation}
and we restrict ourselves to equivariant maps:
\[
u(-t)=\gamma u(t),\;\;t\in\R.
\]
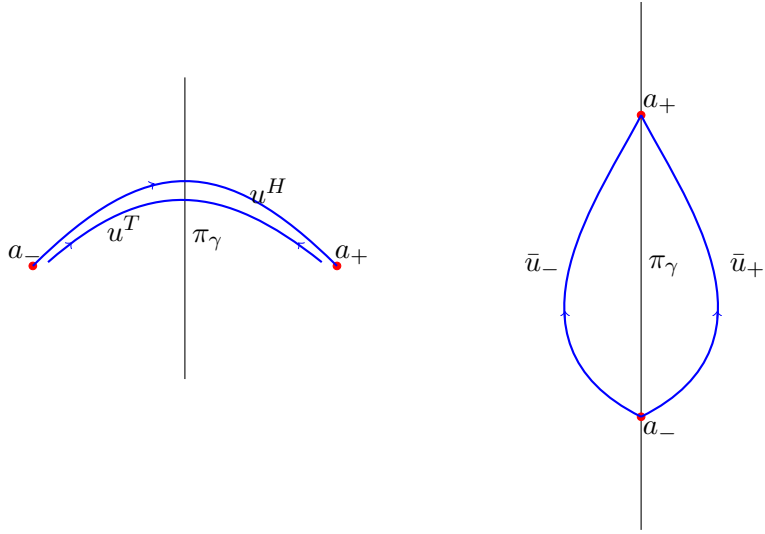
\begin{figure}
  \begin{center}
\begin{tikzpicture}
{\draw (6,-1.5)--(6,2.5);
\node at (6.3,.35){$\pi_\gamma$};
\draw[red, fill] (4,0) circle [radius=0.05];
\draw[red, fill] (8,0) circle [radius=0.05];
\draw [blue, thick](4,0).. controls (5.5,1.5) and(6.5,1.5)..(8,0);
\draw [blue, thick](4.2,+.05).. controls (5.5,1.2) and(6.5,1.1)..(7.8,+.05);
 \node at (3.9,.15){$a_-$};
\node at (8.2,.15){$a_+$};
\node at (7.1,1){$u^H$};
\node at (5.2,.5){$u^T$};
\draw [blue] [->](4.5,.3)--(4.51,.31);
\draw [blue] [->](7.5,.3)--(7.49,.31);
\draw [blue] [->](5.55,1.07)--(5.6,1.095);}

{\draw (12,-3.5)--(12,3.5);
\node at (12.3,0){$\pi_\gamma$};
\draw[red, fill] (12,-2) circle [radius=0.05];
\draw[red, fill] (12,+2) circle [radius=0.05];
\draw [blue, thick](12,-2).. controls (10,-1) and(11.5,1)..(12,+2);
\draw [blue, thick](12,-2).. controls (14,-1) and(12.5,1)..(12,+2);
\node at (12.25,-2.19){$a_-$};
\node at (12.25,2.15){$a_+$};
\node at (10.7,0){$\bar{u}_-$};
\node at (13.4,0){$\bar{u}_+$};
\draw [blue] [->](11,-.7)--(11,-.6);
\draw [blue] [->](13,-.7)--(13,-.6);}
\end{tikzpicture}
\end{center}
\caption{The symmetry of $W$: finite dimension (left); infinite dimension (right)}
\label{figsym}
\end{figure}
We show that, under these restrictions and minimal assumptions on $W$,
the existence of periodic solutions to
\begin{equation}
\ddot{u}=W_u(u),\qquad W_u(u)=\left(\frac{\partial W}{\partial
  u_1}(u),\ldots,\frac{\partial W}{\partial u_m}(u)\right)^\top,
\label{system}
\end{equation}
can be established by minimizing $J_{(0,T)}$ on a suitable set of
$T$-periodic maps.

In the infinite dimensional case our choice for the functional that
replaces $W$ is the action functional
\[
J_\R(u)=\int_\R \Big(\frac{1}{2}\vert
u^\prime\vert^2+W(u)\Big)ds,\;\;u\in\bar{\mathrm{u}}+H^1(\R;\R^m),
\]
where $W$ satisfies (\ref{W}) and $\bar{\mathrm{u}}$ is a smooth map
such that $\lim_{s\rightarrow\pm\infty}\bar{\mathrm{u}}(s)=a_\pm$ with
exponential convergence.  We assume that (\ref{Winv}) holds with
$\gamma$ a reflection that, in analogy with the finite dimensional
case, satisfies
\begin{equation}
\bar{u}_\pm(s)=\gamma\bar{u}_\mp(s),\;\;s\in\R,
\label{barupm}
\end{equation}
with $\bar{u}_-$ and $\bar{u}_+$ distinct global minimizers of $J_\R$
on $\bar{\mathrm{u}}+H^1(\R;\R^m)$. The maps $\bar{u}_-$ and
$\bar{u}_+$ represent two distinct orbits that connect $a_-$ to $a_+$:
\begin{equation}  
  \lim_{s\rightarrow\pm\infty}\bar{u}_\pm(s)=a_\pm.
  \label{het-limit}
\end{equation}
We assume that $\bar{u}_-$ and $\bar{u}_+$ are unique modulo
translation.  Note that (\ref{het-limit}) and (\ref{barupm}) imply
that $a_\pm=\gamma a_\pm$, that is $a_\pm$ belong to the plane
$\pi_\gamma$ fixed by $\gamma$, see Figure \ref{figsym}.  We restrict ourselves to symmetric maps and
replace the dynamical equation (\ref{system}) with
\[
\ddot{u}=\nabla_{L^2}J_\R(u)=-u^{\prime\prime}+W_u(u).
\]
This is actually an elliptic system which, after setting $x=t$ and
$y=s$ takes the form
\begin{equation}
 u_{xx}+u_{yy}=\Delta u=W_u(u).
\label{elliptic}
\end{equation}
We prove that for all $L\geq L_0$, for some $L_0>0$, there is a classical
solution $u^L:\R^2\rightarrow\R^m$ of (\ref{elliptic}) which is equivariant:
\[u^L(-x,y)=\gamma u^L(x,y),\] $L$-periodic in
$x\in\R$ and such that, along a subsequence $L_j\rightarrow+\infty$, converges locally to a
heteroclinic solution that connects $\bar{u}_-$
and $\bar{u}_+$. That is, to a map
$u^H:\R^2\rightarrow\R^m$ that satisfies \eqref{elliptic} and

\begin{equation}
  \begin{split}
 &\lim_{y\rightarrow\pm\infty} u^H(x,y)=a_\pm,\\
    &\lim_{x\rightarrow\pm\infty} u^H(x,y)=\bar{u}_\pm(y).
  \end{split}
  \label{properties1}
\end{equation}
We remark that, in the proof of this, there is an extra difficulty which is not present in the finite dimensional case:
$\bar{u}_-$ and $\bar{u}_+$ are not isolated but any translate $\bar{u}_-(\cdot-r)$ or $\bar{u}_+(\cdot-r)$, $r\in\R$, is again a global minimizer of $J_\R$. Therefore for each $x$ there is a $\bar{u}\in\{\bar{u}_-,\bar{u}_+\}$
and a translation $h(x)$ that determines the point $\bar{u}(\cdot-h(x))$ in the manifolds generated by $\bar{u}_-$ and $\bar{u}_+$ which is the closest to the \emph{fiber} $u^L(x,\cdot)$ of $u^L$. The map $h$ depends on $L$ and to prove convergence to a heteroclinic solution one needs to control $h$ and show that can be bounded by a quantity that does not depend on $L$ and that, for $L_j\rightarrow+\infty$, converges to a limit map $h^\infty:\R\rightarrow\R$ with a definite limit for $x\rightarrow\pm\infty$.
\medskip


The paper is organized as follows. After stating our main results,
that is Theorem \ref{periodic} in Section \ref{finite} and Theorem
\ref{periodic1} in Section \ref{infinite}, we prove Theorem
\ref{periodic} and Theorem \ref{periodic1} in Sections \ref{finite1}
and \ref{infinite1} respectively. The approach used in Section
\ref{infinite1} is inspired by \cite{f}. We include an Appendix where
we present an elementary proof of a property of the functional $J_\R$.

\subsection{The finite dimensional case}
\label{finite}
We assume that $W:\R^m\rightarrow\R$ is a continuous function that
satisfies (\ref{W}), (\ref{Winv}) and (\ref{gammaa}). We also assume
that there is a non-negative function $\sigma:[0,+\infty)\rightarrow\R$
  such that $\int_0^{+\infty}\sigma(r)dr=+\infty$ and\footnote{This
    condition was first introduced in \cite{monteil}}
\begin{equation}
  {\sqrt{W(z)}\geq\sigma(\vert z\vert),\;\; z\in\R^m.}
  \label{sigma}
\end{equation}

\begin{remark}
The assumptions on $W$ imply (see for example \cite{monteil},
\cite{ZS} and \cite{fgn}) the existence of a Lipschitz continuous map
$u^H:\R\rightarrow\R^m$ that satisfies
\begin{equation}
  \begin{split}
    &\lim_{t\rightarrow\pm\infty}u(t)=a_\pm,\\
    &\frac{1}{2}\vert\dot{u}\vert^2-W(u)=0,\\
    &u(-t)=\gamma u(t),\;\;t\in\R.
  \end{split}
  \label{properties0}
\end{equation}
We refer to a map with these properties as a heteroclinic connection
between $a_-$ and $a_+$.
\label{remark1}
\end{remark}

Define
\begin{equation}
  \mathcal{A}^T := \biggl\{u\in H^1_T(\R;\R^m),\;
  u\Bigl(\frac{T}{4}+t\Bigr) = u\Bigl(\frac{T}{4}-t\Bigr),\;
  u(-t)=\gamma u(t),\;\;t\in\R\biggr\},
  \label{admissible}
\end{equation}
and observe that there exists $\tilde{u}\in\mathcal{A}^T$ and a
constant $C_0>0$ independent of $T>4$ such that
\begin{equation}
  J_{(0,T)}(\tilde{u})\leq C_0.
  \label{energy-bound}
\end{equation}
Indeed the map $\tilde{u}$ can be defined by
\[
  \begin{split}
    &\tilde{u}(t)=\frac{1}{2}(a_++a_-+t(a_+-a_-)),\;\;t\in[-1,1],\\
    &\tilde{u}(t)=a_+,\;\;t\in[1,\frac{T}{2}-1].
  \end{split}
\]
Since we are interested in periodic orbits near $u^H$ we restrict our
search to orbits lying in a large ball.  Fix $M$ as the solution of the
equation
\begin{equation}
C_0=\sqrt{2}\int_{2(|a_+|\lor|a_-|)}^M\sigma(s)ds.
\label{Mdef}
\end{equation}
We determine $T$-periodic maps near heteroclinic solutions by
minimizing the action functional (\ref{functional}) on the set
$\mathcal{A}^T\cap\{\|u\|_{L^\infty}\leq 2M\}$.

\begin{theorem}
Assume that $W:\R^m\rightarrow\R$ is a continuous function that
satisfies \eqref{W}, \eqref{Winv}, \eqref{gammaa} and \eqref{sigma}.
Then, there exists $T_0$ such that for each $T\geq T_0$ there exists a
$T$-periodic minimizer $u^T$ of the functional \eqref{functional} in
$\mathcal{A}^T\cap\{\|u\|_{L^\infty}\leq 2M\}$, which is
Lipschitz continuous and satisfies
\begin{enumerate}
\item  $J_{(0,T)}(u^T)\leq C_0,\quad\quad\|u^T\|_{L^\infty}\leq M$,
\item $u^T(-t)=\gamma u^T(t)$,
\item $\frac{1}{2}\vert\dot{u}^T\vert^2-W(u^T)=-W(u^T(\pm\frac{T}{4})), a.e.$
\end{enumerate}
For each $0<q\leq q_0$, for some $q_0>0$, there is a $\tau_q>0$
such that for each $T>4\tau_q$
\begin{equation}
\begin{split}
  &\vert u^T(t)-a_+\vert< q,\;\;t\in\left[\tau_q,\displaystyle\frac{T}{2}-\tau_q\right],\;\;q\in(0,q_0],\cr
\end{split}
\label{*2}
\end{equation}
and therefore
\[
\lim_{T\rightarrow+\infty}u^T\Bigl(\pm\frac{T}{4}\Bigr)=a_\pm.
\]
Moreover, there is a sequence $T_j\rightarrow+\infty$ and a
heteroclinic connection between $a_-$ and $a_+$
$u^H:\R\rightarrow\R^m$ such that
\[
\lim_{j\rightarrow+\infty}u^{T_j}(t)=u^H(t),
\]
uniformly in compacts.

If $W$ is of class $C^1$, then $u^T$ is a classical $T$-periodic
solution of \eqref{system}.
\label{periodic}
\end{theorem}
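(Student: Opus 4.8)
I would obtain $u^T$ by the direct method and extract its basic properties cheaply, then concentrate the work on the uniform estimate \eqref{*2}, which is what forces the limit $T\to\infty$ to be a heteroclinic.

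\emph{Existence and confinement.} The set $\mathcal A^T\cap\{\|u\|_{L^\infty}\le 2M\}$ is non‑empty (it contains $\tilde u$, with $\|\tilde u\|_{L^\infty}=|a_+|\lor|a_-|<M$), and it is weakly closed in $H^1_T$: the constraints defining $\mathcal A^T$ are linear, and since $H^1_T\hookrightarrow C^0$ compactly the bound $\|u\|_{L^\infty}\le 2M$ and $T$‑periodicity pass to weak limits. As $J_{(0,T)}$ is coercive (via $J\ge\tfrac12\int|\dot u|^2$) and weakly lower semicontinuous there ($W$ is continuous and bounded on $\{|z|\le 2M\}$), a minimizer $u^T$ exists with $J_{(0,T)}(u^T)\le J_{(0,T)}(\tilde u)\le C_0$. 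To get $\|u^T\|_{L^\infty}\le M$ I use the elementary bound, valid for $u\in H^1$, $J_{(t_1,t_2)}(u)\ge\sqrt2\int_{t_1}^{t_2}\sqrt{W(u)}\,|\dot u|\ge\sqrt2\bigl|\int_{|u(t_1)|}^{|u(t_2)|}\sigma(s)\,ds\bigr|$, together with: for $T\ge T_0$ large $u^T$ must attain some value of modulus $<2(|a_+|\lor|a_-|)$, for otherwise $u^T$ would avoid a fixed neighbourhood of $\{a_\pm\}$ and $C_0\ge J_{(0,T)}(u^T)\ge cT$ with $c=\min\{W:\ 2(|a_+|\lor|a_-|)\le|z|\le 2M\}>0$. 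The displayed estimate and \eqref{Mdef} then rule out $\|u^T\|_{L^\infty}>M$, so the constraint $\|u\|_{L^\infty}\le2M$ is inactive.

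\emph{Conservation of energy and Lipschitz bound.} For smooth $T$‑periodic $\eta$ that are odd and odd about $T/4$, the maps $u^T(\cdot+\varepsilon\eta(\cdot))$ lie in $\mathcal A^T\cap\{\|u\|_{L^\infty}\le 2M\}$ for small $\varepsilon$; differentiating $J_{(0,T)}$ at $\varepsilon=0$ gives $\int_0^T\bigl(\tfrac12|\dot u^T|^2-W(u^T)\bigr)\dot\eta=0$. The function $g:=\tfrac12|\dot u^T|^2-W(u^T)$ is even about $0$ and about $T/4$ (by the symmetries of $u^T$ and orthogonality of $\gamma$), so its Fourier series contains only the modes $\cos(4k\pi t/T)$, all of which are killed by the admissible $\eta$; hence $g\equiv c$ a.e. Since $c=T^{-1}\bigl(J_{(0,T)}(u^T)-2\int_0^T W(u^T)\bigr)\le C_0/T$ and $W(u^T)\le\Lambda:=\max_{\{|z|\le 2M\}}W$, we get $|\dot u^T|^2\le 2(\Lambda+C_0/T)$ a.e., so $u^T$ is Lipschitz with a $T$‑independent constant. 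A corner‑rounding competitor at the turning point (replace $u^T$ by the constant $u^T(T/4\mp\rho)$ on $[T/4-\rho,T/4+\rho]$, and by its $\gamma$‑image at $3T/4$) changes the action by $-2\rho L^2+o(\rho)$, where $L$ is the essential limit of $|\dot u^T|$ at $T/4$; minimality forces $L=0$, i.e. $c=-W(u^T(\pm T/4))=:-w^*$, which gives (i)--(iii). Finally $\tfrac12|\dot u^T|^2=W(u^T)-w^*\ge0$ yields $W(u^T)\ge w^*$ everywhere, so if $w^*>0$ then $u^T$ avoids a fixed neighbourhood of $\{a_\pm\}$ and $C_0\ge J_{(0,T)}(u^T)\ge(\text{positive const})\,T$; hence $w^*\to0$ and $\operatorname{dist}(u^T(T/4),\{a_-,a_+\})\to0$ as $T\to\infty$, and replacing $u^T$ by the minimizer $u^T(\cdot+T/2)$ if necessary we may assume $u^T(T/4)\to a_+$ (so $u^T(-T/4)=\gamma u^T(T/4)\to a_-$).

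\emph{The uniform estimate \eqref{*2} — the main obstacle.} By the symmetries, $u^T|_{[0,T/4]}$ minimizes $J_{(0,T/4)}$ over $\{v\in H^1:\ v(0)\in\pi_\gamma,\ \|v\|_{L^\infty}\le 2M\}$, with $J_{(0,T/4)}(u^T)\le C_0/4$. Using a cut‑off near‑heteroclinic reference map (the equivariant heteroclinic of Remark~\ref{remark1}, truncated at $\pm\Sigma$, joined to $a_\pm$ and then to the constant $a_+$ in the middle, with $\Sigma=\Sigma(T)\to\infty$) as a competitor gives $J_{(0,T)}(u^T)\le 2\mathfrak e+o_T(1)$, where $\mathfrak e=\sqrt2\,\mathrm{dist}_{\sqrt W}(a_-,a_+)$ is the cost of a heteroclinic; so $J_{(0,T/4)}(u^T)\le\mathfrak e/2+o_T(1)$. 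On the other hand, by the $\gamma$‑symmetry of $W$ one has $\mathrm{dist}_{\sqrt W}(\pi_\gamma,a_\pm)=\tfrac12\mathrm{dist}_{\sqrt W}(a_-,a_+)$, so any sub‑path of $u^T|_{[0,T/4]}$ joining $\pi_\gamma$ to $B_{q_0}(a_+)$, or $B_{q_0}(a_-)$ to $B_{q_0}(a_+)$, costs $\ge\mathfrak e/2-\epsilon(q_0)$, resp. $\ge\mathfrak e-2\epsilon(q_0)$. Comparing with the upper bound shows that, for $q_0$ small and $T$ large, $u^T|_{[0,T/4]}$ performs a single half‑transition: it stays out of $B_{q_0/4}(a_-)$ and reaches $B_{q_0/4}(a_+)$ at $T/4$. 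The measure estimate $|\{t\in(0,T/4):\operatorname{dist}(u^T(t),\{a_\pm\})\ge q\}|\le C_0/(4\mu_q)$, $\mu_q:=\min\{W:\ |z|\le M,\ \operatorname{dist}(z,\{a_\pm\})\ge q\}>0$, then shows $u^T$ enters $B_q(a_+)$ by some time $\le\tau_q$ depending only on $q$; and it cannot leave $B_q(a_+)$ before $T/4$, because an excursion out of $B_{q/2}(a_+)$ (which, by the previous step, stays out of $B_{q_0/4}(a_-)$) can be strictly improved — clipping $|u^T-a_+|$ to $q/2$ saves at least $\tfrac{q^2}{2\ell}-\beta_q\ell$ for a short excursion of width $\ell$, while a geodesic detour to $a_+$ and back saves $\ge\mu_{q/2}\ell-2\rho_q^+$ for a long one, both positive for $q$ small — the surgery being made admissible by performing it simultaneously on the finitely many symmetric copies of the excursion interval. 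Reflecting about $T/4$ gives \eqref{*2} and $u^T(\pm T/4)\to a_\pm$. I expect this step, and in particular making the excursion‑removal estimates uniform for a merely continuous $W$ that may be degenerate at $a_\pm$, to be the main technical difficulty.

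\emph{Passage to the limit and classical regularity.} Since $\{u^T\}_{T\ge T_0}$ is equi‑Lipschitz and uniformly bounded, Arzelà--Ascoli and a diagonal argument give $T_j\to\infty$ with $u^{T_j}\to u^H$ in $C_{\mathrm{loc}}(\R;\R^m)$; $u^H$ is Lipschitz, $u^H(-t)=\gamma u^H(t)$, and the reflection about $T_j/4$ disappears in the limit. Passing \eqref{*2} to the limit gives $\lim_{t\to\mp\infty}u^H(t)=a_\mp$, so $u^H$ is non‑constant. A standard lower‑semicontinuity/surgery argument shows $u^H$ is a local minimizer of $J_\R$ on connections; the reparametrization argument then gives $\tfrac12|\dot u^H|^2-W(u^H)\equiv c^H$ a.e., and $c^H\ge0$ (from $\tfrac12|\dot u^H|^2\ge0$ and $\inf W(u^H)=0$) together with $c^H\le0$ (from $\int_K\tfrac12|\dot u^H|^2\le\liminf_j\int_K(W(u^{T_j})-w^*_j)=\int_K W(u^H)$) forces $c^H=0$; hence $u^H$ satisfies \eqref{properties0} and is a heteroclinic connection between $a_-$ and $a_+$, and $u^{T_j}\to u^H$ uniformly on compacts. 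Finally, if $W\in C^1$, taking variations $\phi\in C^\infty_c((0,T/4);\R^m)$ and extending them to $\mathcal A^T$ by the symmetry rules shows $u^T$ solves $\ddot u=W_u(u)$ weakly on $(0,T/4)$; bootstrapping gives $u^T\in C^2$, and the conditions defining $\mathcal A^T$ — in particular $\dot u^T(T/4)=0$ — are exactly the matching conditions that make the symmetric extension a classical $T$‑periodic solution of \eqref{system} on $\R$.
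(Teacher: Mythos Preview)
Your proposal is essentially correct and shares the paper's skeleton (direct minimization, inner variations for energy conservation, a cut-and-paste to get trapping, Arzelà--Ascoli for the limit), but your route to the key estimate \eqref{*2} is more elaborate than necessary and has a soft spot.

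For \eqref{*2} you take a detour: a sharp upper bound $J_{(0,T)}(u^T)\le 2\mathfrak e+o_T(1)$ via a truncated-heteroclinic competitor, then a $\sqrt W$-geodesic single-transition argument to exclude visits to $B_{q_0}(a_-)$ on $[0,T/4]$, then a measure bound for the first entry into $B_q(a_+)$, and finally an excursion-removal to pin $u^T$ there. The paper is more direct. From $u^T(0)\in\pi_\gamma$ and the crude bound $C_0$ alone one already has $t_{q'}\le C_0/W_m(q')$ with $|u^T(t_{q'})-a|=q'$ for some $a\in\{a_\pm\}$, and the reflection about $T/4$ gives the same at $T/2-t_{q'}$; a single cut-and-paste lemma (Lemma~\ref{lemma1}: if $|u(t_i)-a|\le q'$, $i=1,2$, but $|u(t^*)-a|\ge q$ in between, one can strictly lower $J$ while forcing $|v-a|<q$ on $[t_1,t_2]$) then traps $u^T$ in $B_q(a)$ on $[t_{q'},T/2-t_{q'}]$. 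No sharp upper bound, no single-transition argument, and no appeal to the existence of a heteroclinic are needed. Your excursion-removal is this same lemma in disguise, but your short/long dichotomy with fixed inner radius $q/2$ need not cover all excursion widths $\ell$ for a merely continuous $W$: the overlap of the two regimes boils down to a comparison of the type $\mu_{q/2}\gtrsim W_M(q/2)$, which can fail. The cure, exactly as in Lemma~\ref{lemma1}, is to replace $q/2$ by a much smaller $q'=q'(q)$: the replacement cost $\frac{{q'}^2}{2\delta_{q,q'}}+\delta_{q,q'}W_M(q')$ (with $\delta_{q,q'}=(q-q')^2/C_0$ coming from the H\"older bound) tends to $0$ as $q'\to0$, while the excursion cost stays bounded below by $\int_{q'}^{q}\sqrt{2W_m(s)}\,ds>0$.

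Conversely, you are more careful than the paper in two places: your corner-rounding competitor gives a clean identification $c=-W(u^T(\pm T/4))$ (the paper's one-line ``$\dot u^T(T/4)=0$'' is informal for a merely Lipschitz $u^T$), and you actually argue the heteroclinic limit, including $c^H=0$ via lower semicontinuity, which the paper leaves largely implicit.
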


Note that, if $a_\pm$ is nondegenerate in the sense that the Hessian
matrix $W_{uu}(a_\pm)$ is positive definite or, more generally, if
\[
W_u(u)\cdot(u-a_\pm)\geq\mu\vert u-a_\pm\vert^2,\;\;\text{ for }\;\vert u-a_\pm\vert\leq r_0,
\]
for some $\mu>0$, $r_0>0$, then \eqref{*2} can be strengthened to 
\[
\vert u^T(t)-a_+\vert\leq Ce^{-ct},\;\;t\in\Bigl[0,\frac{T}{4}\Bigr],
\]
where $c, C$ are positive constants independent of $T$.
This follows by 
\[
\frac{d^2}{d t^2}\vert u^T(t)-a_+\vert^2\geq 2\ddot{u}^T\cdot(u^T-a_+)=2W_u(u^T)\cdot(u^T-a_+)\geq2\mu\vert u^T-a_+\vert^2,
\]
and a comparison argument.

\begin{remark}
Depending on the behavior of $W$ in a neighborhood of $a_\pm$ it may
happen that the map $u^H$ connects $a_-$ and $a_+$ in a finite time,
that is, $\exists\:\tau_0<+\infty:
u^H((-\tau_0,\tau_0))\cap\{a_-,a_+\}=\emptyset$,
$u^H(\pm\tau_0)=a_\pm$. We do not exclude this case. A sufficient
condition for $\tau_0=+\infty$, is
\[
W(u)\leq c\vert u-a_\pm\vert^2,
\]
for $u$ in a neighborhood of $a_\pm$.

Note that, if $\tau_0<+\infty$, one can immediately construct a
$T$-periodic map $u^T$ ($T=4\tau_0$) that satisfies
(\ref{properties0}), by setting
\[
u^T\Bigl(\frac{T}{4}+t\Bigr)=u^H\Bigl(\frac{T}{4}-t\Bigr),\;\;\text{
  for }\;t\in\Bigl(0,\frac{T}{2}\Bigr).
\]
\label{remark2}
\end{remark}

\subsection{The infinite dimensional case}
\label{infinite}
We assume that $W:\R^m\rightarrow\R$ is of class $C^3$, that
(\ref{W}), (\ref{Winv}) and (\ref{barupm}) hold with $\bar{u}_\pm$ as
before. Moreover we assume
\begin{description}
\item[$\mathbf{h}_1$]
  $\liminf_{\vert u\vert\rightarrow+\infty}W(u)>0$ and there is $M>0$ such that
  \begin{equation}
    W(su)\geq W(u),\;\;\text{ for}\;\vert u\vert = M,\;s\geq 1.
    \label{W-mon}
  \end{equation}
  
\item[$\mathbf{h}_2$] $a_\pm$ are non degenerate in the sense that the
  Hessian matrix $W_{uu}(a_\pm)$ is definite positive.
  
  For each $r\in\R$ $\bar{u}(\cdot-r)$,
  $\bar{u}\in\{\bar{u}_-,\bar{u}_+\}$, is a solution of
  (\ref{system}).  Therefore differentiating (\ref{system}) with
  respect to $r$ yields
  $\bar{u}^{\prime\prime\prime}=W_{uu}(\bar{u})\bar{u}^\prime$ that
  shows that $0$ is an eigenvalue of the operator
  $T:H^2(\R;\R^m)\rightarrow L^2(\R;\R^m)$ defined by
  \[
  T v=-v^{\prime\prime}+W_{uu}(\bar{u})v,\;\;\bar{u}=\bar{u}_\pm,
  \]
  and $\bar{u}^\prime$ is a corresponding eigenvector.
  
  We also assume
\item[$\mathbf{h}_3$] The maps $\bar{u}_\pm$ are non degenerate in the
  sense that $0$ is a simple eigenvalue of $T$.
  
\end{description}


The above assumptions ensure the existence of a heteroclinic
connection between $\bar{u}_-$ and $\bar{u}_+$.  This was proved by
Schatzman in \cite{scha} without restricting to equivariant maps (see
also \cite{f} and \cite{monteil1}). The first existence result for a
heteroclinic that connects $\bar{u}_-$ to $\bar{u}_+$ was given in
\cite{abg} under the assumption that $W$ is symmetric with respect to
the reflection that exchanges $a_\pm$ with $a_\mp$ but without
requiring \eqref{barupm}.

\begin{remark}
It is well known that the non-degeneracy of $a_\pm$ implies
\begin{equation}
  \begin{split}
    &\vert\bar{u}(y)-a_+\vert\leq
    Ke^{-ky},\;\;y>0,\quad\vert\bar{u}(y)-a_-\vert\leq
    Ke^{ky},\;\;y<0,\\
    &\vert\bar{u}^\prime(y)\vert,
    \vert\bar{u}^{\prime\prime}(y)\vert\leq Ke^{-k\vert
      y\vert},\;\;y\in\R,
  \end{split}
  \label{bar-exp}
\end{equation}
for some constants $k>0,K>0$.
\label{remark3}
\end{remark}

Under the above assumptions we prove the following:
\begin{theorem}
There is $L_0>0$ and positive constants $k,K$, $k^\prime,K^\prime$ such that for each $L\geq L_0$ there exists a classical
solution $u^L:\R^2\rightarrow\R^m$ of \eqref{elliptic}, with the
following properties:
\begin{enumerate}
\item 
  $\vert u^L(x,y)-a_-\vert\leq Ke^{ky},\;\;x\in\R,\;y\leq 0$,
  \vskip.1cm
  $\vert u^L(x,y)-a_+\vert\leq Ke^{-ky},\;\;x\in\R,\;y\geq 0$.
  
\item $u^L$ is $L$-periodic in $x\in\R$:
  $u^L(x+L,y)=u^L(x,y),\;\;(x,y)\in\R^2$.
\item $u^L$ is a brake orbit: $u^L(\frac{L}{4}+x,y)=u^L(\frac{L}{4}-x,y),$
\item $u^L$ is equivariant $u^L(-x,y)=\gamma u^L(x,y)$
\item $u^L$ satisfies the identities:
  \vskip.1cm
  $\frac{1}{2}\|u_x^L(x,\cdot)\|_{L^2(\R;\R^m)}^2-J_\R(u^L(x,\cdot))=-J_\R(u^L(\frac{L}{4},\cdot))$,
  \vskip.1cm
  $\hskip.5cm\langle u_x^L(x,\cdot),u_y^L(x,\cdot)\rangle_{L^2(\R;\R^m)}=0,\;x\in\R.$
  \vskip.25cm
\item $u^L$ minimizes
  \[
  \mathcal{J}(u)=\int_{(0,L)\times\R}\Big(\frac{1}{2}\vert\nabla u\vert^2+W(u)\Big)dxdy
  \]
  on the set of the $H_{\mathrm{loc}}^1(\R^2;\R^m)$ maps that satisfy $(ii)-(iv)$ and
  $
  \lim_{y\rightarrow\pm\infty}u(x,y)=a_\pm.
  $

\item 
  $
  \min_{r\in\R}\|u^L(x,\cdot)-\bar{u}_+(\cdot-r)\|_{L^2(\R;\R^m)}\leq K^\prime e^{-k^\prime x},\;\;x\in[0,\frac{L}{4}].
  $
  
  In particular, as $L\rightarrow+\infty$, $u^L(\frac{L}{4},\cdot)$
  converges to the manifold of the translates of $\bar{u}_+$.
\end{enumerate}
Moreover, there exist $\eta\in\R$, a sequence $L_j\rightarrow+\infty$ and a
heteroclinic solution $u^H:\R^2\rightarrow\R^m$ connecting 
$\bar{u}_-$ to $\bar{u}_+$ that satisfy
\[
\lim_{j\rightarrow+\infty}u^{L_j}(x,y-\eta)=u^H(x,y),\;\;(x,y)\in\R^2,
\]
uniformly in $C^2$ in any strip of the form $(-l,l)\times\R$,
for $l>0$.
\label{periodic1}
\end{theorem}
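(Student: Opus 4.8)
The plan is to obtain $u^L$ by direct minimization of $\mathcal{J}$ over the class $\mathcal{A}^L$ of maps $u\in H^1_{\mathrm{loc}}(\R^2;\R^m)$ that are $L$-periodic in $x$, satisfy the brake symmetry $u(\frac{L}{4}+x,y)=u(\frac{L}{4}-x,y)$ and the equivariance $u(-x,y)=\gamma u(x,y)$, obey $u(x,\cdot)-\bar{\mathrm{u}}\in H^1(\R;\R^m)$ for a.e.\ $x$ (hence $\lim_{y\to\pm\infty}u(x,y)=a_\pm$), and satisfy $\|u\|_{L^\infty}\le M$ with $M$ as in $\mathbf{h}_1$. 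Using the heteroclinic connection between $\bar u_-$ and $\bar u_+$ --- which exists under $\mathbf{h}_1$--$\mathbf{h}_3$, cf.\ \cite{scha} and \cite{f} --- on a window around $x=0$, matched near the ends of the window to the appropriate constant-in-$x$ fibres (the mismatch costs exponentially little by \eqref{bar-exp}) and extended by the symmetries, one obtains a test map showing that $\mathcal{A}^L\neq\emptyset$ and that $\mathcal{J}(u)\le c_0L+C_0$, with $c_0:=J_\R(\bar u_\pm)$ and $C_0$ independent of $L$.

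Then the direct method applies. For a minimizing sequence, truncation at level $M$ via \eqref{W-mon} does not increase $\mathcal{J}$ and preserves the $L^\infty$ bound, while the non-degeneracy $\mathbf{h}_2$ and the bound on the fibre energies give a uniform modulus for the convergence $u(x,\cdot)\to a_\pm$ as $y\to\pm\infty$, hence tightness in $y$; weak $H^1_{\mathrm{loc}}$-compactness, lower semicontinuity of $\mathcal{J}$ and closedness of the constraints then yield a minimizer $u^L$. Its Euler--Lagrange equation is $\Delta u=W_u(u)$ in the weak sense; since the symmetries defining $\mathcal{A}^L$ are symmetries of the equation, $u^L$ is a genuine critical point, and elliptic regularity with $W\in C^3$ makes it a classical solution, giving (ii)--(iv); the truncation argument also upgrades minimality in $\mathcal{A}^L$ to minimality in the class of (vi). The identities (v) are the two components, integrated in $y$ against the decay \eqref{bar-exp}, of the divergence-free stress--energy tensor $T_{ij}=u_{x_i}\cdot u_{x_j}-\delta_{ij}\big(\frac12|\nabla u|^2+W(u)\big)$ of solutions of \eqref{elliptic}, evaluated using $u^L_x(\frac{L}{4},\cdot)=0$ from (iii). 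The decay (i) follows from $\Delta|u^L-a_\pm|^2\ge 2\mu|u^L-a_\pm|^2$ near $a_\pm$ (a consequence of $\mathbf{h}_2$) and a comparison argument, once one knows, again from the fibre energy bound, that $u^L(x,\cdot)$ lies near $a_\pm$ for $\pm y$ large, uniformly in $x$ and $L$.

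The heart of the matter is (vii), where the non-compactness of the families $\{\bar u_\pm(\cdot-r)\}_{r\in\R}$ of translated minimizers is the main obstacle. From (v) one reads $\frac12\|u^L_x(x,\cdot)\|^2=E^L(x)-E^L(\frac{L}{4})\ge 0$, with $E^L(x):=J_\R(u^L(x,\cdot))-c_0\ge 0$; together with $\int_0^L E^L\le C_0$ this forces $E^L(\frac{L}{4})\le C_0/L$, and a quantitative form of the uniqueness modulo translation of $\bar u_\pm$ --- the property of $J_\R$ established in the Appendix --- converts smallness of $E^L$ into smallness of the distance to the orbit of $\bar u_+$, so $\rho^L(\frac{L}{4})\to 0$, where $\rho^L(x):=\min_r\|u^L(x,\cdot)-\bar u_+(\cdot-r)\|_{L^2}$. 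A cut-and-paste comparison against a competitor that switches to the exact connection then shows, by minimality, that $\rho^L(x)\le\epsilon_0$ on an interval $[\bar x,\frac{L}{4}]$ with $\bar x$ independent of $L$; inside this tube one writes $u^L(x,\cdot)=\bar u_+(\cdot-h(x))+w(x,\cdot)$ with $w$ orthogonal to $\bar u_+'(\cdot-h(x))$, and the spectral gap from $\mathbf{h}_3$ together with \eqref{elliptic} yields a differential inequality for $\|w(x,\cdot)\|^2$ and $|h'(x)|$ which, with the boundary condition $u^L_x(\frac{L}{4},\cdot)=0$, integrates to $\rho^L(x)\le K'e^{-k'x}$ on $[0,\frac{L}{4}]$ and to a bound on $h$, uniform in $L$, with $|h'(x)|$ decaying exponentially.

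Finally, the uniform bounds --- energy per period, $L^\infty$, the $y$-decay (i) and the control of $h$ --- give, along a subsequence $L_j\to+\infty$ and after a $y$-translation $\eta$ absorbing the common limiting value of the translation parameter $h^{L_j}$, convergence of $u^{L_j}$ in $C^2_{\mathrm{loc}}$ on every strip $(-l,l)\times\R$ to a classical solution $u^H$ of \eqref{elliptic} that is equivariant and satisfies \eqref{properties1}: the limits as $x\to\pm\infty$ follow by passing (vii) and its equivariant reflection to the limit (using $|h'|\in L^1$ near $\pm\infty$, which forces $h$ to converge), and $u^H$ is non-constant in $x$ because $u^L(0,\cdot)\in\pi_\gamma$ stays at $H^1$-distance at least $\frac12\|\bar u_+-\bar u_-\|_{H^1}>0$ from the orbit of $\bar u_+$, so the limiting fibre at $x=0$ cannot lie on it. The main difficulty throughout is the bookkeeping of the translation $h$: obtaining the uniform-in-$L$ bound with exponentially decaying $h'$ and passing it to a limit $h^\infty$ with definite limits as $x\to\pm\infty$ --- precisely the obstruction highlighted in the introduction.
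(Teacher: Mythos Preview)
Your overall architecture---minimize on $\mathcal{A}^L$, Hamiltonian identities via the stress--energy tensor, orthogonal decomposition $u^L(x,\cdot)=\bar u_+(\cdot-h(x))+w(x,\cdot)$, a stay-in lemma by cut-and-paste, and exponential decay of $\|w(x,\cdot)\|$ from a second-order differential inequality using the spectral gap $\mathbf{h}_3$---is the paper's. The gap is in (i) with constants \emph{independent of $L$}. You say (i) follows ``from the fibre energy bound'' once $u^L(x,\cdot)$ is known to lie near $a_\pm$ for $\pm y$ large ``uniformly in $x$ and $L$''; but the only fibre bound available is the integrated one $\int_0^L(J_\R(u^L(x,\cdot))-c_0)\,dx\le C_0$, which on the good set $S_{p^*}$ places $u^L(x,\cdot)$ near \emph{some} translate $\bar u(\cdot-h(x))$ with no a~priori $L$-uniform bound on $h(x)$, and on the bad set $\tilde S_{p^*}$ (of measure $\le C_0/e_{p^*}$) gives nothing pointwise. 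So this step yields only an $L$-dependent $d_L$ (the paper's Lemma~\ref{trap}), enough for existence but not for (i).

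In the paper the order is reversed: after existence one first proves $|h(x)-h(x')|\le C_h$ for \emph{all} $x,x'\in[0,L]$, uniformly in $L$ (Lemmas~\ref{h-bounded}--\ref{h-bounded-all}), treating separately $S_{p^*}$ (via \eqref{hprime} and \eqref{vx-in-sp}), short components of $\tilde S_{p^*}$ (where $h'$ is still controlled by \eqref{derivative}), and long components (a geometric estimate, see \eqref{elem-observ}); this global $h$-bound then gives the $L$-independent (i) (Corollary~\ref{exp-ind-L}), and only then the pointwise estimates \eqref{vDExp-ind-L}--\eqref{vy} on $v$ that are needed to run the stay-in lemma (Lemma~\ref{stay-in}) and the differential inequality (Lemma~\ref{expD}). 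Your $h$-control, extracted from the differential inequality inside the tube $[\bar x,\frac{L}{4}]$, both presupposes these estimates and covers none of $\tilde S_{p^*}$. (A minor point: your test map invokes the heteroclinic from \cite{scha}; the paper uses the elementary linear interpolation $\tilde u(x,\cdot)=\frac12(\bar u_++\bar u_-+x(\bar u_+-\bar u_-))$ on $[-1,1]$ extended by $\bar u_+$, so that the theorem yields an independent existence proof for $u^H$.)
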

Note that a by product of this theorem is a new proof of the existence
of a heteroclinic solution $u^H$ in the class of equivariant maps.

\section{The proof of Theorem \ref{periodic}}
\label{finite1}

From (\ref{energy-bound}) we can restrict ourselves to consider maps in the
subset
\begin{equation}
  \mathcal{A}^T_{C_0,M} =
  \{u\in\mathcal{A}^T\cap\{\|u\|_{L^\infty}\leq 2M\}: J_{(0,T)}(u)\leq C_0\},
\label{subset}
\end{equation}
where $M$ is given by \eqref{Mdef}.

\noindent
Step 1.
$\quad u\in\mathcal{A}^T_{C_0,M} \quad\Rightarrow\quad\|u\|_{L^\infty}\leq M.$

\noindent
Define
\begin{equation}
 W_m(s)=\min_{\vert u-a_\pm\vert\geq s,\,\vert u\vert<2M}W(u),\\
 \label{wm-wM}
\end{equation}
Since $u\in\mathcal{A}^T$ implies $u(0)=\gamma u(0)$ we have
\[
\vert u(0)-a_\pm\vert\geq\frac{1}{2}\vert a_+-a_-\vert.
\]
Therefore, given $p\in(0,\frac{1}{2}\vert a_+-a_-\vert)$, for
$u\in\mathcal{A}^T_{C_0,M}$, there are $t_p\in\bigl(0,\frac{C_0}{W_m(p)}\bigr)$
and $a\in\{a_-,a_+\}$ such that, for $T>4t_p$, it results
\begin{equation}
  \begin{split}
    &\vert u(t)-a_\pm\vert>p,\;\;\text{ for }\;t\in[0,t_p),\\
      &\vert u(t_p)-a\vert=p.
  \end{split}
  \label{tp}
\end{equation}
Note, in passing, that since $u\in\mathcal{A}^T$ implies
$u(\frac{T}{4}-t)=u(\frac{T}{4}+t)$ we also have
\begin{equation}
  \Bigl\vert u\Bigl(\frac{T}{2}-t_p\Bigr)-a\Bigr\vert=p.
  \label{tp2}
\end{equation}

Let $\bar{t}$ be such that $\vert u(\bar{t})\vert=\|u\|_{L^\infty}$,
then we have
\[
\begin{split}
  &\sqrt{2}\int_{2(|a_+|\lor|a_-|)}^M\sigma(s)ds=C_0\geq
  J_{(t_p,\bar{t})}(u)\\
  &\geq\int_{t_p}^{\bar{t}}\sqrt{2 W(u(t))}\vert\dot{u}(t)\vert
  dt\geq\sqrt{2}\int_{\vert a\vert+p}^{\|u\|_{L^\infty}}\sigma(s)ds
\end{split}
\]
that proves the claim.

It follows that the constraint $\|u\|_{L^\infty}\leq 2M$ imposed in
the definition of the admissible set is inactive for any
$u\in\mathcal{A}_{C_0,M}^T$.

Next we prove a key lemma which is a refinement of Lemma 3.4 in
\cite{af} based on an idea from \cite{sourdis}.
\begin{lemma}
Assume that $u\in H^1((\alpha,\beta);\R^m)$, $(\alpha,\beta)\subset\R$
a bounded interval, satisfies
\[
\begin{split}
& J_{(\alpha,\beta)}(u)\leq C^\prime,\\
& \|u\|_{L^\infty}\leq M^\prime
\end{split}
\]
for some $C', M'>0$. Let $q_0=\frac{1}{2}\vert a_+-a_-\vert$. Given
$q\in(0,q_0]$, there is $q^\prime(q)\in(0,q)$ such that, if
\[
\begin{split}
  & \vert u(t_i)-a_+\vert\leq q^\prime(q),\;i=1,2\\
  & \vert u(t^*)-a_+\vert\geq q,\;\text{ for some }\;t^*\in(t_1,t_2),
\end{split}
\]
for some $\alpha\leq t_1<t_2\leq\beta$, then there exists $v$ which
coincides with $u$ outside $(t_1,t_2)$ and is such that
\[
\begin{split}
  & \vert v(t)-a_+\vert< q,\;\text{ for }\;t\in[t_1,t_2],\\
  & J_{(t_1,t_2)}(v)<J_{(t_1,t_2)}(u).
\end{split}
\]
\label{lemma1}
\end{lemma}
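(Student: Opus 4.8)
The plan is to follow the replacement idea attributed to \cite{sourdis}: on the interval $(t_1,t_2)$ I would substitute for $u$ a competitor $v$ that moves quickly from $u(t_1)$ to $a_+$, rests at $a_+$ (at no potential cost, since $W(a_+)=0$), and then moves quickly from $a_+$ to $u(t_2)$, while keeping $v=u$ outside $(t_1,t_2)$. Any such $v$ satisfies $|v(t)-a_+|\le q'<q$ on $[t_1,t_2]$ automatically, so the whole matter reduces to arranging $J_{(t_1,t_2)}(v)<J_{(t_1,t_2)}(u)$ by choosing $q'=q'(q)$ small. The one obstruction is that this competitor cannot be fitted inside $(t_1,t_2)$ when $t_2-t_1$ is extremely small; but then the excursion forced by $|u(t^*)-a_+|\ge q$ costs a large kinetic energy, contradicting $J_{(\alpha,\beta)}(u)\le C'$. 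Thus the argument splits into two cases, the second of which is shown to be vacuous.

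First I would record a lower bound for $J_{(t_1,t_2)}(u)$. Since $q\le q_0$ and $|a_+-a_-|=2q_0\ge 2q$, the closed annulus $A_q:=\{z\in\R^m:\ q/2\le|z-a_+|\le q\}$ is compact and contains neither $a_+$ nor $a_-$, hence $\mu(q):=\min_{A_q}W>0$. Assuming $q'<q/2$, continuity forces $u$ to cross $A_q$ on some subinterval $[\tau_1,\tau_2]\subset(t_1,t_2)$, along which $\int_{\tau_1}^{\tau_2}|\dot u|\,dt\ge q/2$; together with the pointwise inequality $\tfrac12|\dot u|^2+W(u)\ge\sqrt{2W(u)}\,|\dot u|$ used in Step~1 this gives
\[
J_{(t_1,t_2)}(u)\ \ge\ \int_{\tau_1}^{\tau_2}\sqrt{2W(u)}\,|\dot u|\,dt\ \ge\ \sqrt{2\mu(q)}\ \frac q2\ =:\ \Lambda(q)>0 .
\]

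Next I would estimate the competitor. Put $\omega(q'):=\max_{|z-a_+|\le q'}W(z)$, so $\omega(q')\to0$ as $q'\to0^+$ by continuity and $W(a_+)=0$. Provided $t_2-t_1\ge 2\sqrt{q'}$, define $v=u$ outside $(t_1,t_2)$ and, on $(t_1,t_2)$, let $v$ be affine from $u(t_1)$ to $a_+$ on $[t_1,t_1+\sqrt{q'}]$, constant equal to $a_+$ on $[t_1+\sqrt{q'},t_2-\sqrt{q'}]$, and affine from $a_+$ to $u(t_2)$ on $[t_2-\sqrt{q'},t_2]$. Then $v\in H^1((\alpha,\beta);\R^m)$, $v=u$ outside $(t_1,t_2)$, each affine piece stays in $\overline{B_{q'}(a_+)}$ with speed $\le\sqrt{q'}$, and hence
\[
J_{(t_1,t_2)}(v)\ \le\ (q')^{3/2}+2\,\omega(q')\sqrt{q'}\ =:\ \varepsilon(q')\ \longrightarrow\ 0\qquad(q'\to0^+).
\]
I would then fix $q'(q)\in(0,q/2)$ so small that both $\varepsilon(q')<\Lambda(q)$ and $(q-q')^2/\sqrt{q'}>C'$ hold; this is possible because $\Lambda(q)$ and $C'$ are fixed while $\varepsilon(q')\to0$ and $(q-q')^2/\sqrt{q'}\to+\infty$ as $q'\to0^+$.

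Finally I would close the dichotomy. If $t_2-t_1\ge 2\sqrt{q'}$, the competitor $v$ above is the required map: it agrees with $u$ off $(t_1,t_2)$, stays within $q'<q$ of $a_+$ on $[t_1,t_2]$, and obeys $J_{(t_1,t_2)}(v)\le\varepsilon(q')<\Lambda(q)\le J_{(t_1,t_2)}(u)$. If instead $t_2-t_1<2\sqrt{q'}$, then, $u$ running from $\overline{B_{q'}(a_+)}$ out past distance $q$ and back to $\overline{B_{q'}(a_+)}$, one has $\int_{t_1}^{t_2}|\dot u|\,dt\ge 2(q-q')$, so by Cauchy--Schwarz
\[
C'\ \ge\ J_{(t_1,t_2)}(u)\ \ge\ \tfrac12\int_{t_1}^{t_2}|\dot u|^2\,dt\ \ge\ \frac{2(q-q')^2}{t_2-t_1}\ >\ \frac{(q-q')^2}{\sqrt{q'}}\ >\ C',
\]
which is absurd, so this case does not arise. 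The genuinely delicate point is precisely this split — the naive replacement by $v$ is available only when the interval is not too short — and I expect the step needing the most care to be the bookkeeping that makes the two smallness requirements on $q'$ compatible, which is exactly where the a priori bound $J_{(\alpha,\beta)}(u)\le C'$ enters.
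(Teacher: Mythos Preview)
Your proof is correct and shares with the paper the core ``linear--rest at $a_+$--linear'' competitor, but differs in where the competitor is placed and how room for it is secured. The paper avoids any case split: using the H\"older estimate $|u(t)-u(t')|\le\sqrt{C'}\,|t-t'|^{1/2}$ it locates inside $(t_1,t_2)$ a subinterval $(\tau_1,\tau_2)$ with $|u(\tau_i)-a_+|=q'$ such that the travel from radius $q'$ out to radius $q$ forces $\tilde\tau_1-\tau_1,\ \tau_2-\tilde\tau_2\ge\delta_{q,q'}:=(q-q')^2/C'$; the linear pieces of $v$ are then placed on intervals of exactly this guaranteed length, and outside $(\tau_1,\tau_2)$ one keeps $v=u$, which already satisfies $|u-a_+|\le q$ there by the very choice of the $\tau_i$. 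You instead replace $u$ on all of $(t_1,t_2)$ and dispose of the short-interval obstruction by a separate kinetic blow-up argument. Both work; the paper's route is a bit slicker (no dichotomy), while yours is slightly more elementary and, as a small bonus, your annulus-based lower bound $\Lambda(q)$ does not actually need the hypothesis $\|u\|_{L^\infty}\le M'$, whereas the paper's lower bound $2\int_{q'}^{q}\sqrt{2W_m(s)}\,ds$ does, through the restriction $|u|\le M'$ in the definition of $W_m$.
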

\begin{proof}
For $t,t^\prime\in\R$, we have
\begin{equation}
  \vert
  u(t)-u(t^\prime)\vert\leq\Bigl\vert\int_t^{t^\prime}\vert\dot{u}\vert
  ds\Bigr\vert \leq\vert
  t-t^\prime\vert^\frac{1}{2}\Bigl(\int_t^{t^\prime}\vert\dot{u}\vert^2
  ds\Bigr)^\frac{1}{2} \leq\sqrt{C_0}\vert
  t-t^\prime\vert^\frac{1}{2}.
  \label{holder}
\end{equation}
Define the intervals
$(\tilde{\tau}_1,\tilde{\tau}_2)\subset(\tau_1,\tau_2)$ by setting
\[
\begin{split}
  &\tilde{\tau}_1=\max\{t>t_1: \vert u(s)-a_+\vert\leq q,\;\text{ for }\;s\leq t\},\\
  &\tau_1=\max\{t<\tilde{\tau}_1: \vert u(t)-a_+\vert\leq q^\prime\},\\
  &\\
  &\tilde{\tau}_2=\min\{t<t_2: \vert u(s)-a_+\vert\leq q,\;\text{ for }\;s\geq t\},\\
  &\tau_2=\min\{t>\tilde{\tau}_2: \vert u(t)-a_+\vert\leq q^\prime\}.
\end{split}
\]

From (\ref{holder}) we have
\[
q-q^\prime=\vert u(\tilde{\tau}_1)-a_+\vert-\vert u(\tau_1)-a_+\vert
\leq\vert u(\tilde{\tau}_1)-
u(\tau_1)\vert\leq\sqrt{C_0}\vert\tau_1-\tilde{\tau}_1\vert^\frac{1}{2},
\]
and therefore
\[
  \tilde{\tau}_1-\tau_1\geq\frac{1}{C_0}(q-q^\prime)^2,
\]
and similarly for $\tau_2-\tilde{\tau}_2$.  Next we set
$\delta_{q,q^\prime}:=\frac{1}{C_0}(q-q^\prime)^2$ and, see Figure
\ref{fig1}, define $v$:
\[
  v=\left\{\begin{array}{l}
  u,\;\;\text{ for }\;t\not\in(\tau_1,\tau_2),\\
  a_+,\;\;\text{ for }\;t\in(\tau_1+\delta_{q,q^\prime},\tau_2-\delta_{q,q^\prime}),\\
  u(\tau_1)-(u(\tau_1)-a_+)\frac{t-\tau_1}{\delta_{q,q^\prime}},
  \;\;\text{ for }\;t\in(\tau_1,\tau_1+\delta_{q,q^\prime}),\\
  u(\tau_2)-(u(\tau_2)-a_+)\frac{\tau_2-t}{\delta_{q,q^\prime}},
  \;\;\text{ for }\;t\in(\tau_2-\delta_{q,q^\prime},\tau_2).
  \end{array}\right.
\]

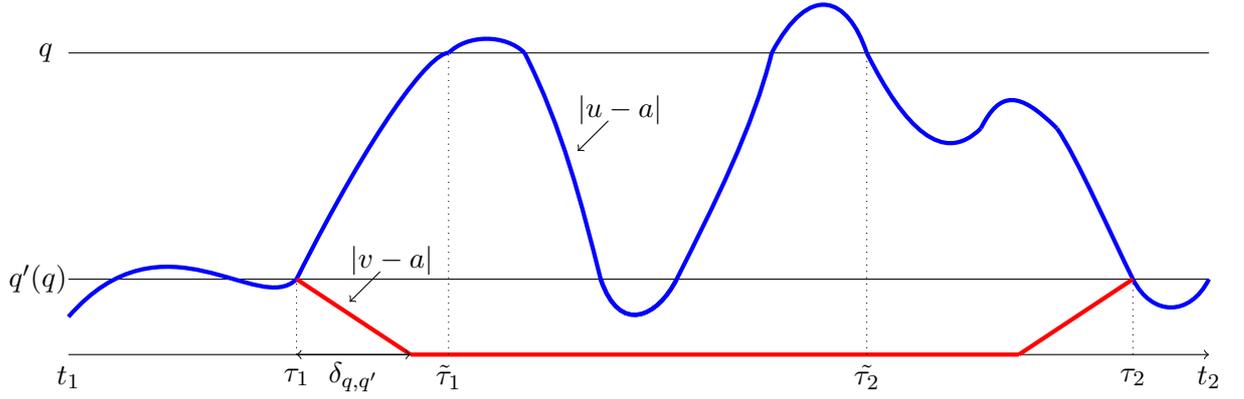
\begin{figure}\label{fig1}
\begin{center}
  \begin{tikzpicture}
\draw [->](0,0)--(15,0);
\draw (0,1)--(15,1);
\draw (0,4)--(15,4);
\draw [blue, ultra thick](0,.5).. controls (1.25,1.95) and(2.5,.5)..(3,1);
\draw [blue, ultra thick](3,1).. controls (4,3) and(4.75,4)..(5,4);
\draw [blue, ultra thick](5,4).. controls (5.25,4.25) and(5.75,4.25)..(6,4);
\draw [blue, ultra thick](6,4).. controls (6.5,3) and(6.75,2)..(7,1);
\draw [blue, ultra thick](7,1).. controls (7.25,.25) and(7.75,.5)..(8,1);
\draw [blue, ultra thick](7,1).. controls (7.25,.25) and(7.75,.5)..(8,1);
\draw [blue, ultra thick](8,1).. controls (8.5,2) and(9,3)..(9.25,4);
\draw [blue, ultra thick](9.25,4).. controls (9.75,4.95) and(10.25,4.75)..(10.5,4);
\draw [blue, ultra thick](10.5,4).. controls (11,3) and(11.5,2.5)..(12,3);
\draw [blue, ultra thick](12,3).. controls (12.25,3.5) and(12.5,3.5)..(13,3);
\draw [blue, ultra thick](13,3).. controls (13.25,2.65) and(13.75,1.5)..(14,1);
\draw [blue, ultra thick](14,1).. controls (14.25,.5) and(14.75,.5)..(15,1);
\draw [red, ultra thick](3,1)--(4.5,0);
\draw [red, ultra thick](4.5,0)--(12.5,0);
\draw [red, ultra thick](12.5,0)--(14,1);
\draw [dotted](3,0)--(3,1);
\draw [dotted](5,0)--(5,4);
\draw [dotted](14,0)--(14,1);
\draw [dotted](10.5,0)--(10.5,4);
\node at (0,-.3){$t_1$};
\node at (3,-.3){$\tau_1$};
\node at (5,-.3){$\tilde{\tau}_1$};
\node at (10.5,-.3){$\tilde{\tau_2}$};
\node at (14,-.3){$\tau_2$};
\node at (15,-.3){$t_2$};
\node at (-.3,4){$q$};
\node at (-.4,1){$q^\prime(q)$};
\draw [<-](6.7,2.7)--(7.1,3.1);
\node at (7.25,3.25){$|u-a|$};
\draw [<-](3.7,.7)--(4.1,1.1);
\node at (4.25,1.25){$|v-a|$};
\draw [<->](3,0)--(4.5,0);
\node at (3.75,-.3){$\delta_{q,q^\prime}$};
\end{tikzpicture}
\end{center}
  \caption{The construction of the map $v$ in Lemma \ref{lemma1}}
\end{figure}

For each $s\in(0,q_0]$ define
\[
  W_M(s)=\max_{\vert u-a_+\vert\leq s}W(u).
\]
We observe that $\vert u(\tau_i)-a_+\vert=q^\prime$, $i=1,2$ and estimate
\[
\begin{split}
& J_{(\tau_1,\tau_2)}(v)=J_{(\tau_1,\tau_1+\delta_{q,q^\prime})}(v)+J_{(\tau_2-\delta_{q,q^\prime},\tau_2)}(v)\leq 2\Bigl(\frac{1}{2}\frac{{q^\prime}^2}{\delta_{q,q^\prime}}+\delta_{q,q^\prime}W_M(q^\prime)\Bigr),\\
& J_{(\tau_1,\tau_2)}(u)\geq J_{(\tau_1,\tilde{\tau}_1)}(u)+J_{(\tilde{\tau}_2,\tau_2)}(u)\\
& \geq
\int_{\tau_1}^{\tilde{\tau}_1}\sqrt{2W(u)}\vert\dot{u}\vert dt+\int_{\tilde{\tau}_2}^{\tau_2}\sqrt{2W(u)}\vert\dot{u}\vert dt\\
&\geq 2\int_{q^\prime}^q\sqrt{2W_m(s)} ds.
\end{split}
\]
where $W_m(s)$ is defined as in (\ref{wm-wM}) with $M^\prime$ instead
of $2M$.

Since $\delta_{q,q^\prime}\leq\frac{q^2}{C_0}$ is a decreasing
function of $q^\prime\in(0,q)$ and $W_M(q')$ is infinitesimal with
$q'$
we can fix a $q^\prime=q^\prime(q)$ so small that
\[
\frac{1}{2}\frac{{q^\prime}^2}{\delta_{q,q^\prime}}+\delta_{q,q^\prime}W_M(q^\prime)<\int_{q^\prime}^q\sqrt{2W_m(s)}
ds.
\]
The proof is complete.
\end{proof}

\noindent
Step 2. From Step 1 and Lemma \ref{lemma1} it follows that, if in
(\ref{tp}) and (\ref{tp2}) we take $p=q^\prime(q)$ and set
$\tau_q=t_{q^\prime(q)}$, then, for $T>4\tau_q$, in the minimization
process we can restrict ourselves to the maps $u\in\mathcal{A}_{C_0,M}^T$ that
satisfy
\[
\begin{split}
  &\vert u(t)-a_+\vert<q,\;t\in\Bigl[\tau_q,\frac{T}{2}-\tau_q\Bigr],\\
  &\vert u(t)-a_-\vert<q,\;t\in\Bigl[\frac{T}{2}+\tau_q,T-\tau_q\Bigr].\\
\end{split}
\]

\noindent
Step 3. The existence of a minimizer $u^T\in\mathcal{A}_{C_0,M}^T$ is
quite standard. From Step 1 and (\ref{holder}) $\mathcal{A}_{C_0,M}^T$
is an equibounded and equicontinuous family of maps. Therefore from
Ascoli-Arzela theorem there exists a minimizing sequence
$\{u_j\}_j\subset\mathcal{A}_{C_0,M}^T$ that converges uniformly to a
map $u^T\in\mathcal{A}_{C_0,M}^T$. This and $J_{(0,T)}(u_j)\leq C_0$
imply that $\{u_j\}_j$ is bounded in $H^1((0,T);\R^m)$ and therefore,
by passing to a subsequence if necessary, that $u_j$ converges
weakly in $H^1$ to $u^T$. From the lower semicontinuity of the norm we
have $\liminf_{j\rightarrow+\infty}\int_0^T\vert
\dot{u}_j\vert^2dt\geq\int_0^T\vert\dot{u}^T\vert^2dt$ while uniform
convergence implies $\lim_{j\rightarrow+\infty}\int_0^T
W(u_j(t))dt\geq\int_0^T W(u^T(t))dt$.

\smallbreak
  \noindent
Step 4. The minimizer $u^T$ is Lipschitz continuous and satisfies
conservation of energy.  Let $t_0<t_1<t_2<t_3$ be numbers such that
$t_3-t_0\leq T$. Given a small number $\xi\in\R$ let
$\phi:\R\rightarrow\R$ be the $T$-periodic piecewise-linear map that
satisfies $\phi(t_0)=t_0$, $\phi(t_1+\xi)=t_1$, $\phi(t_2+\xi)=t_2$,
$\phi(t_3)=t_3$ and let $\psi$ be the inverse of $\phi$. Set
\[
v_\xi(t)=u^T(\phi(t))
\]
and
\[
f(\xi)=J_{(0,T)}(v_\xi)-J_{(0,T)}(u^T).
\]
The minimality of $u^T$ implies that $f^\prime(0)=0$. A simple computation yields
\[
\begin{split}
  &f(\xi)=\int_{(t_0,t_1)\cup(t_2,t_3)}\biggl(\frac{1}{2}
  \Bigl(\frac{1}{\psi^\prime(\tau)}-1\Bigr)\vert\dot{u}^T(\tau)\vert^2+(\psi^\prime(\tau)-1)W\bigl(u^T(\tau)\bigr)\biggl)d\tau\\
  &=\int_{t_0}^{t_1}\biggl(\frac{-\xi}{2(t_1-t_0+\xi)}\vert\dot{u}^T(\tau)\vert^2
  +\frac{\xi}{t_1-t_0}W\bigl(u^T(\tau)\bigr)\biggr)d\tau\\
  &+\int_{t_2}^{t_3}\biggl(\frac{\xi}{2(t_3-t_2-\xi)}\vert\dot{u}^T(\tau)\vert^2
  -\frac{\xi}{t_3-t_2}W\bigl(u^T(\tau)\bigr)\biggr)d\tau,
\end{split}
\]
and we obtain
\[
\begin{split}
  &0=f^\prime(0)\\
  &\Leftrightarrow\frac{1}{t_1-t_0}\int_{t_0}^{t_1}\Big(\frac{1}{2}\vert\dot{u}^T(\tau)\vert^2
  -W\bigl(u^T(\tau)\bigr)\Big)d\tau=\frac{1}{t_3-t_2}\int_{t_2}^{t_3}\Big(\frac{1}{2}\vert\dot{u}^T(\tau)\vert^2
  -W\bigl(u^T(\tau)\bigr)\Big)d\tau.
\end{split}
\]
This shows that there exists $C\in\R$ independent of $t$ such that
\[
\lim_{t^\prime\rightarrow t}\frac{1}{t^\prime-t}\int_{t}^{t^\prime}\Big(\frac{1}{2}\vert\dot{u}^T(\tau)\vert^2
-W\bigl(u^T(\tau)\bigl)\Big)d\tau=C.
\]
Therefore we have
\[
\frac{1}{2}\vert\dot{u}^T(t)\vert^2
-W\bigl(u^T(t)\bigr)=C
\]
for each Lebesgue point $t\in\R$.  From
$u(\frac{T}{4}-t)=u(\frac{T}{4}+t)$ it follows that
$\dot{u}(\frac{T}{4})=0$, which implies $C=W\bigl(u^T(\pm\frac{T}{4})\bigr)$.

\noindent
Step 5.
If $W$ is of class $C^1$, then $u^T$ is a classical solution of (\ref{system}).
Since $u^T$ is a minimizer, if $w:(t_1,t_2)\rightarrow\R^m$ is a smooth map that satisfies $w(t_i)=0$, $i=1,2$ we have
\begin{equation}
  \begin{split}
    & 0=\frac{d}{d\lambda}J_{(t_1,t_2)}(u^T+\lambda w)\vert_{\lambda=0}\\
    &=\int_{t_1}^{t_2}(\dot{u}^T\cdot\dot{w}+W_u(u^T)\cdot w)dt=\int_{t_1}^{t_2}\Bigl(\dot{u}^T-\int_{t_1}^t W_u\bigl(u^T(s)\bigr)ds\Bigr)\cdot\dot{w}dt.
  \end{split}
  \label{VarEq}
\end{equation}
Since this is valid for all $0<t_1<t_2<T$ and
$\dot{w}:(t_1,t_2)\rightarrow\R^m$ is an arbitrary smooth map with
zero average (\ref{VarEq}) implies
\[
\dot{u}^T=\int_{t_1}^t W_u\bigl(u^T(s)\bigr)ds+const.
\]
The continuity of $u^T$ and of $W_u$ implies that the right hand side
of this equation is a map of class $C^1$. It follows that we can
differentiate and obtain
\[
\ddot{u}^T=W_u(u^T),\;\;t\in(0,T).
\]
The proof of Theorem \ref{periodic} is complete.

\section{The proof of Theorem \ref{periodic1}}
\label{infinite1}

In analogy with the finite dimensional case we define
\[
\begin{split}
  &\mathcal{A}^L=\Bigl\{u\in H_{\mathrm{loc}}^1(\R^2;\R^m): u(x+L,\cdot)=u(x,\cdot),
  \lim_{y\rightarrow\pm\infty}u(x,y)=a_\pm,\\
  &\quad\quad u\Bigl(\frac{L}{4}+x,y\Bigr)=u\Bigl(\frac{L}{4}-x,y\Bigr),\;\;
  u(-x,y)=\gamma u(x,y).\Bigr\}
\end{split}
\]
We will show that the solution of (\ref{elliptic}) in Theorem
\ref{periodic1} can be determined as a minimizer of the energy
\begin{equation}
  \mathcal{J}_{(0,L)\times\R}(u)=\int_{(0,L)\times\R}\Big(\frac{1}{2}\vert\nabla
  u\vert^2+W(u)\Big) dxdy
  \label{functional1}
\end{equation}
on $\mathcal{A}^L$.

We can assume
\begin{equation}
  \|u\|_{L^\infty(\R^2;\R^m)}\leq M,
  \label{u-bound1}
\end{equation}
where $M$ is the constant in $\mathbf{h}_1$ and
\begin{equation}
  \mathcal{J}_{(0,L)\times\R}(u)\leq C_0+c_0L,
  \label{energy-bound1}
\end{equation}
where $C_0>0$ is a constant independent of $L>4$ and
\begin{equation}
  c_0=J_\R(\bar{u}_\pm).
  \label{c0def}
\end{equation}
To prove (\ref{u-bound1}) set $u_M=0$ if $u=0$ and $u_M=\min\{\vert
u\vert, M\}{u/\vert u\vert}$ otherwise and note that (\ref{W-mon})
implies
\[
W(u_M)\leq W(u),
\]
while
\[
\vert\nabla u_M\vert\leq\vert\nabla u\vert,\;\;\text{a.e.},
\]
because the mapping $u\rightarrow u_M$ is a projection.
It follows
\[
\begin{split}
  &\mathcal{J}_{(0,L)\times\R}(u)-\mathcal{J}_{(0,L)\times\R}(u_M)\\
  &=\int_{\{\vert u\vert\geq M\}}\Big(W(u)-W(u_M)
  +\frac{1}{2}(\vert\nabla u\vert^2-\vert\nabla u_M\vert^2)\Big)dxdy\geq 0,
\end{split}
\]
that proves the claim.  To prove (\ref{energy-bound1}) we define a map
$\tilde{u}\in\mathcal{A}^L$ that satisfies (\ref{energy-bound1}) by
setting:
\[
  \begin{split}
    &\tilde{u}(x,\cdot)=\frac{1}{2}(\bar{u}_++\bar{u}_-+x(\bar{u}_+-\bar{u}_-)),\;\;x\in[-1,1],\\
    &\tilde{u}(x,\cdot)=\bar{u}_+,\;\;x\in[1,\frac{L}{2}-1].
  \end{split}
\]

\begin{remark}
  From \eqref{energy-bound1} and the minimality of ${\bar u}_\pm$ it
  follows that
  \[
  \int_0^L\int_\R \frac{1}{2}|u_x|^2dxdy \leq C_0 + c_0L -  \int_0^L\int_\R \Bigl(\frac{1}{2}|u_y|^2 + W(u)\Bigr)dxdy \leq C_0.
  \]
\end{remark}

Since $a_\pm$ are non degenerate zeros of $W\geq 0$, there exist
positive constants $\gamma, \Gamma$ and $r_0>0$ such that
\begin{equation}
  \begin{split}
    &W_{uu}(a_\pm+z)\zeta\cdot\zeta\geq\gamma^2\vert\zeta\vert^2,\;\;\zeta\in\R^m,\;\vert z\vert\leq r_0,\\
    &\frac{1}{2}\gamma^2\vert z\vert^2\leq W(a_\pm+z)\leq\frac{1}{2}\Gamma^2\vert z\vert^2,\;\;\vert z\vert\leq r_0.
  \end{split}
  \label{second-derW}
\end{equation}
For a map $v:\R\rightarrow\R^m$ we simply denote the norms
$\|v\|_{L^2(\R;\R^m)}$ and $\|v\|_{H^1(\R;\R^m)}$ with $\|v\|$ and
$\|v\|_1$ respectively.
\vskip.1cm One of the difficulties with the minimization on
$\mathcal{A}^L$ is the fact that $\mathcal{J}_{(0,L)\times\R}$ is
translation invariant on $\mathcal{A}^L$. This corresponds to a loss
of compactness.  We show in the next lemma that we can restrict ourselves to a
subset of $\mathcal{A}^L$ of maps $u$ that, aside from a bounded
interval independent of $u$, remain near to $a_-$ and $a_+$. This
restores compactness.
\begin{lemma}
  There is $d_L>0$ such that in the minimization of the functional
  (\ref{functional1}) on $\mathcal{A}^L$ we can restrict ourselves to
  the subset of maps that satisfy
\begin{equation}
  \begin{split}
    &\vert u(x,y)-a_-\vert<\frac{r_0}{2},\;\text{for}\;x\in\R,\,y<-d_L,\\
    &\vert u(x,y)-a_+\vert<\frac{r_0}{2},\;\text{for}\;x\in\R,\,y>d_L,
  \end{split}
  \label{TRAP}
\end{equation}
with $r_0$ as in \eqref{second-derW}
\label{trap}
\end{lemma}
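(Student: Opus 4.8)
The plan is to argue as in Steps 1--2 of the proof of Theorem \ref{periodic}: I would show that every $u\in\mathcal A^L$ satisfying \eqref{u-bound1} and \eqref{energy-bound1} can be replaced, without increasing $\mathcal J_{(0,L)\times\R}$ and while remaining in $\mathcal A^L$, by a map that in addition satisfies \eqref{TRAP}; then the infimum of $\mathcal J_{(0,L)\times\R}$ over $\mathcal A^L$ equals the infimum over the subclass of maps obeying \eqref{TRAP}, and the minimization may be carried out over the latter. It suffices to produce the estimate near $a_+$ (that is, for $y>0$); the bound near $a_-$ follows from the same argument with $a_+,\bar u_+$ replaced by $a_-,\bar u_-$.

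First I would slice the energy in the $x$-variable,
\[
\mathcal J_{(0,L)\times\R}(u)=\int_0^L\Bigl(\tfrac12\|u_x(x,\cdot)\|^2+J_\R(u(x,\cdot))\Bigr)\,dx ,
\]
and use that $c_0=J_\R(\bar u_\pm)\le J_\R(v)$ for every admissible fiber $v$: by \eqref{energy-bound1} and the Remark following it, $\int_0^L\tfrac12\|u_x(x,\cdot)\|^2dx\le C_0$ and $\int_0^L\bigl(J_\R(u(x,\cdot))-c_0\bigr)dx\le C_0$; in particular $x\mapsto u(x,\cdot)$ is $\tfrac12$-Hölder continuous into $L^2(\R;\R^m)$ with a constant depending only on $C_0$, and the Hölder estimate \eqref{holder} holds along each fiber as well. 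By \eqref{u-bound1}, \eqref{second-derW} and the positivity of $W$ off $a_\pm$, there is $w_0>0$ with $W(z)\ge w_0$ whenever $|z|\le M$ and $|z-a_-|\ge r_0/2$, $|z-a_+|\ge r_0/2$. Hence for a.e.\ $x$ the set $\{y:\ |u(x,y)-a_-|\ge r_0/2\ \text{and}\ |u(x,y)-a_+|\ge r_0/2\}$ has measure $\le J_\R(u(x,\cdot))/w_0$, so each fiber eventually lies in the ball of radius $r_0/2$ about $a_+$; what is missing is a bound on the height from which this happens that is uniform in $x\in[0,L]$.

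This uniform bound does not follow from the energy alone, precisely because of the translation invariance of $J_\R$ noted just before the lemma: a fiber may be close to a translate $\bar u_+(\cdot-h)$ with $|h|$ arbitrarily large at no energy cost. To normalize this I would first de-shear in $y$: replace $u(x,y)$ by $u(x,y+h(x))$, choosing $h$ (continuously in $x$, and compatibly with the reflection about $L/4$ and the $\gamma$-equivariance, which leave the whole construction invariant) so as to cancel the component of $u_x$ along $u_y$, i.e.\ $h'(x)=-\langle u_x(x,\cdot),u_y(x,\cdot)\rangle/\|u_y(x,\cdot)\|^2$. Since $J_\R$ is invariant under translations of $y$ the fiber energies $J_\R(u(x,\cdot))$ are unchanged, while $\|u_x(x,\cdot)+h'(x)u_y(x,\cdot)\|^2\le\|u_x(x,\cdot)\|^2$, so $\int_0^L\tfrac12\|u_x\|^2dx$ does not increase; the (few) fibers with $J_\R(u(x,\cdot))$ very large, on which this ODE degenerates because $\|u_y\|$ need not be bounded below, are dealt with separately by replacing them directly with $\bar u_\pm$, which only lowers their energy. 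After this reduction the transition band of every fiber sits at a bounded height, and the $2$D set $\{(x,y)\in[0,L]\times(0,\infty):\ |u(x,y)-a_+|\ge r_0/2\}$, whose measure is $\le(C_0+c_0L)/w_0$, is contained in $[0,L]\times(0,d_L)$ for a $d_L$ determined by $C_0,c_0,w_0$, by \eqref{bar-exp}, and by the strict-minimality property of $\bar u_\pm$ modulo translation proved in the Appendix.

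Finally I would apply the cut-and-paste of Lemma \ref{lemma1}, now in the $y$-variable and simultaneously for all $x$: at the last $y$ (necessarily $\le d_L$ after the reduction) at which $|u(x,y)-a_+|=r_0/4$, interpolate affinely to $a_+$ over a unit $y$-interval and set the fiber equal to $a_+$ beyond it; by \eqref{second-derW} the energy of this tail is bounded by a fixed multiple of $|u(x,d_L)-a_+|^2$, while an excursion reaching distance $r_0/2$ from $a_+$ forces, through \eqref{holder}, a $y$-interval of definite length on which the original integrand is $\ge\sqrt{2w_0}\,|u_y|$; hence the replacement strictly lowers the fiber energy whenever active, and the resulting map lies in $\mathcal A^L$, satisfies \eqref{TRAP} and has no larger energy than $u$. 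The main obstacle is the third step: performing the de-shearing continuously in $x$, keeping the control uniform in $L$, and handling the fibers on which $\|u_y\|$ is small — this is exactly where the non-compactness of the manifold of translates of $\bar u_\pm$, flagged in the Introduction, must be confronted; everything else runs parallel to the finite-dimensional Steps 1--2 and Lemma \ref{lemma1}.
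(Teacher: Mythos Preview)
Your proposal has two genuine gaps, and in both places the paper takes a different route.

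\textbf{The fiber-by-fiber cut-and-paste ignores the $u_x$ energy.} You truncate each fiber at the last height $y^*(x)$ where $|u(x,y)-a_+|=r_0/4$ and interpolate linearly to $a_+$. Your comparison ``the replacement strictly lowers the fiber energy whenever active'' only shows $J_\R(\tilde u(x,\cdot))\le J_\R(u(x,\cdot))$; it says nothing about $\int_0^L\|\tilde u_x(x,\cdot)\|^2\,dx$. Since $y^*$ depends on $x$ in an uncontrolled (possibly discontinuous) way, the $x$-derivative of the modified map picks up terms proportional to $\partial_x y^*(x)$, and the total energy may well increase. The paper avoids this by never truncating fiber-by-fiber: instead it first produces a sequence of heights $y_j\to+\infty$ at which $|u(x,y_j)-a_+|<r_0/2$ holds for \emph{every} $x\in[0,L]$, and then invokes the cut-off lemma of \cite{alikakos-fusco4c4}, a genuinely two-dimensional variational statement that yields a competitor on the whole strip $[0,L]\times(y_j,y_{j+1})$ with smaller $\mathcal J$, not merely smaller $J_\R$ on each fiber.

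\textbf{The de-shearing with fiber replacement is ill-posed.} On fibers with large $J_\R(u(x,\cdot))$ the ODE for $h$ is not under control, and your proposed fix---``replace them directly with $\bar u_\pm$''---destroys $H^1_{\mathrm{loc}}$ regularity: the jump at the interface makes $u_x$ a distribution and the energy infinite. Even on the good set, de-shearing controls only the \emph{variation} of the transition height in $x$; you would still need to pin $h$ down at one point by a single $y$-translation. The paper does precisely (and only) that single translation: Lemma~\ref{away} gives one $\bar x$ with $\|u(\bar x,\cdot)-\bar u(\cdot-\bar r)\|_1<r_0/(8\sqrt 2)$; after replacing $u$ by $u(\cdot,\cdot+\bar r)$ one has $\|u(\bar x,\cdot)-\bar u\|_{L^\infty}<r_0/8$; the bound $\int_0^L\int_\R|u_x|^2\le 2C_0$ then forces the set $Y_0$ of heights $y$ at which \emph{some} $u(x,y)$ deviates from $u(\bar x,y)$ by $\ge r_0/8$ to have measure $\lesssim L$, and any $y\notin Y_0$ with $y\ge\bar y$ furnishes one of the good heights $y_j$ above. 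No continuous $x$-dependent shear is needed at this stage. The decomposition-and-shear analysis you sketch is exactly what the paper carries out \emph{later}, in Lemmas~\ref{represent}--\ref{h-bounded-all}, but for the \emph{minimizer}, where the Hamiltonian identity $\langle u_x,u_y\rangle=0$ of Lemma~\ref{properties} is already available and the whole computation is cleaner.
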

\begin{proof}
Set $\bar{y}=\frac{1}{k}\log{\frac{4K}{r_0}}$, then from
(\ref{bar-exp}) it follows
\begin{equation}
  \begin{split}
    &\vert\bar{u}(y)-a_-\vert\leq\frac{r_0}{4},\;\text{for}\;y\leq-\bar{y},\;\bar{u}\in\{\bar{u}_-,\bar{u}_+\},\\
    &\vert\bar{u}(y)-a_+\vert\leq\frac{r_0}{4},\;\text{for}\;y\geq +\bar{y},\;\bar{u}\in\{\bar{u}_-,\bar{u}_+\}.
  \end{split}
  \label{In-CONV}
\end{equation}

Given $u\in\mathcal{A}^L$, define
\[
X_0=\Bigl\{x\in[0,L]:\|u(x,\cdot)-\bar{u}_\pm(\cdot-r)\|_1\geq\frac{r_0}{8\sqrt{2}},\;r\in\R\Bigr\}.
\]
If $u$ satisfies (\ref{energy-bound1}), then Lemma \ref{away} or
Proposition \ref{away1} implies
\[
\vert X_0\vert\leq\frac{C_0}{e_{\frac{r_0}{8\sqrt{2}}}}.
\]
Therefore for all $L>\frac{C_0}{e_{\frac{r_0}{8\sqrt{2}}}}$ there
exist $\bar{x}\in[0,L]$, $\bar{r}\in\R$ and
$\bar{u}\in\{\bar{u}_-,\bar{u}_+\}$ such that
\begin{equation}
  \|u(\bar{x},\cdot)-\bar{u}(\cdot-\bar{r})\|_1<\frac{r_0}{8\sqrt{2}}.
  \label{LINF}
\end{equation}

Since we have $\mathcal{J}(u_r) = \mathcal{J}(u)$ for $u_r(x,y)=u(x,y+r), r\in\R$,
\eqref{LINF}, we can identify $u_r$ with $u$.
Then (\ref{LINF}) implies, via $\Vert
v\Vert_{L^\infty}\leq\sqrt{2}\Vert v\Vert_1$, the estimate
\begin{equation}
  \|u(\bar{x},\cdot)-\bar{u}\|_{L^\infty(\R;\R^m)}<\frac{r_0}{8}.
  \label{Linfty}
\end{equation}
Consider now the set
\[
Y_0=\Bigl\{y\in\R:\vert
u(x_y,y)-u(\bar{x},y)\vert\geq\frac{r_0}{8},\;\text{for some}\;
x_y\in(\bar{x},\bar{x}+L)\Bigr\}.
\]
For $y\in Y_0$ it results
\[
\begin{split}
  &\frac{r_0}{8}\leq\vert u(x_y,y)-u(\bar{x},y)\vert\leq\vert x_y-\bar{x}\vert^\frac{1}{2}
  \Bigl(\int_{\bar{x}}^{x_y}\vert u_x(x,y)\vert^2 dx\Bigr)^\frac{1}{2}\\
  &\leq L^\frac{1}{2}\Bigl(\int_{\bar{x}}^{\bar{x}+L}\vert u_x(x,y)\vert^2 dx\Bigr)^\frac{1}{2},\\
\end{split}
\]
so that
\[
\frac{r_0^2}{64}\vert Y_0\vert\leq
L\int_\R\int_{\bar{x}}^{\bar{x}+L}\vert u_x(x,y)\vert^2 dx\leq 2LC_0.
\]
It follows
\[
\vert Y_0\vert\leq 128\frac{LC_0}{r_0^2},
\]
therefore there exists an increasing sequence $\{y_j\}\in\R\setminus
Y_0$ such that
\[
\begin{split}
  & y_0=\bar{y},\;\;\;y_j-y_{j-1}>\vert Y_0\vert,\;j=1,2,\dots\\
  &\vert u(x,y_j)-a_+\vert<\frac{r_0}{2},\;\text{for}\;x\in[\bar{x},\bar{x}+L].
\end{split}
\]
This follows from (\ref{In-CONV}) and (\ref{Linfty}). From the proof
of the cut-off lemma in \cite{alikakos-fusco4c4} we infer that, if the
measure of the set
\[
\Bigl\{(x,y) \in[\bar{x},\bar{x}+L]\times[y_{j-1},y_j]:
\vert u(x,y)-a_+\vert>\frac{r_0}{2}\Bigr\}
\]
is positive, then there exists a map
$v_j:\R\times[y_j,y_{j+1}]\rightarrow\R^m$ which is L-periodic in
$x\in\R$, coincides with $u$ on the boundary of the strip
$\R\times(y_j,y_{j+1})$ and satisfies
\begin{equation}
  \mathcal{J}_{\Omega_j}(v_j)<\mathcal{J}_{\Omega_j}(u),
  \label{better}
\end{equation}
where $\Omega_j=(\bar{x},\bar{x}+L)\times(y_j,y_{j+1})$,
$j=1,2,\dots$. From this we see that to each map $u\in\mathcal{A}^L$
that satisfies (\ref{energy-bound1}) but not
\[
  \vert u(x,y)-a_+\vert<\frac{r_0}{2},\;\text{for}\;x\in\R,\,y>\bar{y}+\vert Y_0\vert.
\]
we can associate a map $v$ that satisfies this inequality and
(\ref{better}).  This and a similar argument concerning the other
inequality in (\ref{TRAP}) establish the lemma with $d_L=\bar{y}+\vert
Y_0\vert$.
\end{proof}
With Lemma \ref{trap} at hand the existence of a minimizer
$u^L\in\mathcal{A}^L$ follows by standard variational arguments. The
minimizer $u^L$ satisfies (\ref{u-bound1}). From this, the assumed
smoothness of $W$ and elliptic theory it follows
\begin{equation}
  \|u^L\|_{C^{2,\beta}(\R^2;\R^m)}\leq C^*,
  \label{C2alpha}
\end{equation}
for some constants $C^*>0$, $\beta\in(0,1)$ independent of $L$ and
$u^L$ is a classical solution of (\ref{elliptic}). Moreover, from the
fact that $u^L$ satisfies (\ref{TRAP}) and a comparison argument we
obtain
\begin{equation}
  \begin{split}
    &\vert u(x,y)-a_-\vert\leq Ke^{-k(\vert y\vert-d_L)},\;\text{for}\;x\in\R,\,y<-d_L,\\
    &\vert u(x,y)-a_+\vert\leq Ke^{-k(\vert y\vert-d_L)},\;\text{for}\;x\in\R,\,y>d_L.
  \end{split}
  \label{LExp}
\end{equation}
and, for $\alpha=(\alpha_1,\alpha_2)$, $\alpha_i=1,2$, $\vert\alpha\vert=1,2$
\[
  \vert D^\alpha u^L(x,y)\vert\leq Ke^{-k(\vert
    y\vert-d_L)},\;\text{for}\;\vert y\vert>d_L.
\]

\subsection{Basic lemmas}
\vskip.2cm To show that the minimizer $u^L$ has the properties listed
in Theorem \ref{periodic1}, in particular {\emph{(i)}}, \emph{(vii)}
and {\emph{(viii)}}, we need point-wise estimates on $u^L$ that do not
depend on $L$. For example to prove {\emph{(i)}} we need to show that
$d_L$ in (\ref{LExp}) can be taken independent of $L$. For
\emph{(vii)} and {\emph{(viii)}} a detailed analysis of the behavior
of the trace $u^L(x,\cdot)$ as a function of $x\in(0,L)$ is
necessary. To complete this program we use several ingredients: a
decomposition of $u^L(x,\cdot)$ that we discuss next; two Hamiltonian
identities that, together with the decomposition of $u^L(x,\cdot)$,
allow a representation of the energy
$\mathcal{J}_{(0,L)\times\R}(u^L)$ with a one dimensional integral in
$x$ (see Lemma \ref{properties} and Lemma \ref{represent}) and an
analysis of the behavior of the \emph{effective potential}
$J_\R(\bar{u}+v)-J_\R(\bar{u})$, $\bar{u}\in\{\bar{u}_-,\bar{u}_+\}$
as a function of $v\in H^1(\R;\R^m)$ that we present in Lemma
\ref{lemmaw-true} and in Lemma \ref{away}.

Let $\bar{\mathrm{u}}:\R\rightarrow\R^m$ be a smooth map with the same
asymptotic behavior as $\bar{u}_\pm$.  Set $H^0(\R;\R^m)=L^2(\R;\R^m)$
and let $H^1(\R;\R^m)$ be the standard Sobolev space. For $j=0,1$ let
$\langle\cdot,\cdot\rangle_j$ be the inner product in $H^j(\R;\R^m)$
and $\|\cdot\|_j$ the associated norm. If there is no risk of
confusion, for $j=0$ we simply write $\langle\cdot,\cdot\rangle$ and
$\|\cdot\|$ instead of $\langle\cdot,\cdot\rangle_0$ and
$\|\cdot\|_0$. Set
\[\mathcal{H}^j=\bar{\mathrm{u}}+H^j(\R;\R^m),\]

Define
\[
  q_j^u=\inf_{r\in\R,\pm}\|u-\bar{u}_\pm(\cdot-r)\|_j,\;\;u\in\mathcal{H}^j.
\]
Note that for large $\vert r\vert$ we have
\[
\|u-\bar{u}_\pm(\cdot-r)\|_j\geq \frac{1}{2}\vert
a_+-a_-\vert\sqrt{|r|}.
\]
This and the fact that $\|u-\bar{u}_\pm(\cdot-r)\|_j$ is continuous in
$r$ imply the existence of $h_j\in\R$ and
$\bar{u}_j\in\{\bar{u}_-,\bar{u}_+\}$ such that
\[
q_j^u=\|u-\bar{u}_j(\cdot-h_j)\|_j.
\]
$q_j^u$ is a continuous function of $u\in\mathcal{H}^j$ and a standard
argument implies that
\begin{equation}
  \langle u-\bar{u}_j(\cdot-h_j),\bar{u}_j^\prime(\cdot-h_j)\rangle_j=0.
  \label{orthogonal}
\end{equation}
Note that $\bar{u}_j$ remains equal to some fixed
$\bar{u}\in\{\bar{u}_-,\bar{u}_+\}$ while $u$ changes continuously in
the subset of $\mathcal{H}^j$ where
\[
  q_j^u<\frac{1}{2}\inf_{r\in\R}\|\bar{u}_+-\bar{u}_-(\cdot-r)\|_0.
\]
We quote from Section 2 in \cite{scha}
\begin{lemma}
  There exists $\bar{q}>0$ such that $q_j^u<\bar{q}$ implies that
  $u_j$ and $h_j$ are uniquely determined. Moreover $h_j$ is a function
  of class $C^{3-j}$ of $u\in\mathcal{H}^j$ and
  \begin{equation}
    \begin{split}
      &(D_{{u}}h_j)w=-\frac{\langle
        w,\bar{u}^\prime(\cdot-h_j)\rangle_j}{\|\bar{u}^\prime\|_j^2-\langle
        {{u}}-\bar{u}(\cdot-h_j),\bar{u}^{\prime\prime}(\cdot-h_j)\rangle_j}.
    \end{split}
    \label{derivative}
  \end{equation}
  There are constants $C, \tilde{C}>0$ such that, for $q_1^u<\bar{q}$,
  \begin{equation}
    \begin{split}
      &\vert h_0-h_1\vert\leq Cq_1^u,\\
      &\|u-\bar{u}(\cdot-h_0)\|_1\leq\tilde{C}q_1^u.
    \end{split}
    \label{h-h}
  \end{equation}
  \label{lemmaw}
\end{lemma}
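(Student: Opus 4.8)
The plan is to deduce everything from the implicit function theorem applied to the functional $F(u,r):=\langle u-\bar u(\cdot-r),\bar u'(\cdot-r)\rangle_j$, where $\bar u$ denotes a fixed element of $\{\bar u_-,\bar u_+\}$ (the relevant one, as explained before the statement). The identity \eqref{orthogonal} says that the minimizing translation $h_j$ is a zero of $F(u,\cdot)$. First I would compute $\partial_r F(u,r)=-\langle \bar u'(\cdot-r),\bar u'(\cdot-r)\rangle_j-\langle u-\bar u(\cdot-r),\bar u''(\cdot-r)\rangle_j=-\|\bar u'\|_j^2+\langle u-\bar u(\cdot-r),\bar u''(\cdot-r)\rangle_j$. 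When $q_j^u$ is small, $\|u-\bar u(\cdot-h_j)\|_j$ is small, so by Cauchy--Schwarz the second term is bounded by $q_j^u\,\|\bar u''\|_j$; hence for $\bar q>0$ small enough $\partial_r F\neq 0$ at $(u,h_j)$, and moreover the denominator appearing in \eqref{derivative} is bounded away from $0$. This gives local uniqueness of $h_j$ (two distinct zeros with small $q$ would contradict the strict monotonicity of $F(u,\cdot)$ on the relevant interval, using the a priori bound $\|u-\bar u(\cdot-r)\|_j\le\|u-\bar u(\cdot-h_j)\|_j+|r-h_j|\,\|\bar u'\|_j$ to stay in the regime where $\partial_r F<0$) and simultaneously uniqueness of the branch $\bar u_j=\bar u$, since the translates of $\bar u_-$ and of $\bar u_+$ are uniformly separated in $L^2$ and $\bar q$ is chosen below half that separation. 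The regularity claim is then exactly the statement of the implicit function theorem: $F$ is as smooth in $(u,r)$ as the map $r\mapsto\bar u(\cdot-r)$ allows into $H^j$, which is $C^{3-j}$ because $W\in C^3$ forces $\bar u\in C^3$ (indeed $C^{4}$ by the equation $\bar u'''=W_{uu}(\bar u)\bar u'$ and bootstrapping), and $u\mapsto F(u,r)$ is affine hence $C^\infty$; so $h_j\in C^{3-j}$, and differentiating $F(u,h_j(u))\equiv 0$ yields \eqref{derivative}.

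For the estimates \eqref{h-h}, the first inequality follows by comparing the two orthogonality conditions. From \eqref{orthogonal} with $j=0$ and $j=1$ we have $\langle u-\bar u(\cdot-h_0),\bar u'(\cdot-h_0)\rangle_0=0$ and $\langle u-\bar u(\cdot-h_1),\bar u'(\cdot-h_1)\rangle_1=0$. Writing $\phi(r):=\langle u-\bar u(\cdot-r),\bar u'(\cdot-r)\rangle_0$, we have $\phi(h_0)=0$ and, by the bound on $\partial_r\phi$ just established, $|\phi(h_1)|=|\phi(h_1)-\phi(h_0)|\ge c\,|h_1-h_0|$ for a constant $c>0$ (uniform for $q_1^u<\bar q$). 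On the other hand $\phi(h_1)=\langle u-\bar u(\cdot-h_1),\bar u'(\cdot-h_1)\rangle_0=\langle u-\bar u(\cdot-h_1),\bar u'(\cdot-h_1)\rangle_1-\langle (u-\bar u(\cdot-h_1))',\bar u''(\cdot-h_1)\rangle_0=-\langle (u-\bar u(\cdot-h_1))',\bar u''(\cdot-h_1)\rangle_0$, which is bounded by $\|u-\bar u(\cdot-h_1)\|_1\,\|\bar u''\|_0\le q_1^u\,\|\bar u''\|_0$ since $h_1$ realizes $q_1^u$. Combining the two bounds gives $|h_0-h_1|\le C q_1^u$. The second inequality of \eqref{h-h} then follows from the triangle inequality: $\|u-\bar u(\cdot-h_0)\|_1\le\|u-\bar u(\cdot-h_1)\|_1+\|\bar u(\cdot-h_0)-\bar u(\cdot-h_1)\|_1\le q_1^u+|h_0-h_1|\sup_{s}\|\bar u'(\cdot-s)\|_1\le q_1^u+C q_1^u\,\|\bar u'\|_1=\tilde C q_1^u$, where $\|\bar u(\cdot-h_0)-\bar u(\cdot-h_1)\|_1\le |h_0-h_1|\sup_s\|\bar u'(\cdot-s)\|_1$ comes from the mean value theorem applied to $s\mapsto\bar u(\cdot-s)\in H^1$.

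The main obstacle, and the only point requiring genuine care rather than bookkeeping, is making the constants $\bar q$, $C$, $\tilde C$ and the lower bound $c$ on $|\partial_r\phi|$ uniform — in particular uniform over all of $\mathcal H^j$ (equivalently, independent of which fiber $u^L(x,\cdot)$ one later plugs in) and independent of $L$ and of $x$. This rests on the exponential decay \eqref{bar-exp} of $\bar u-a_\pm$, $\bar u'$, $\bar u''$, which makes $\|\bar u'\|_j$ and $\|\bar u''\|_j$ finite and translation-invariant, so that all the $r$-dependence drops out and the smallness threshold $\bar q$ can be fixed once and for all as a fraction of $\min\{\|\bar u'\|_0^2/\|\bar u''\|_0,\ \tfrac12\inf_r\|\bar u_+-\bar u_-(\cdot-r)\|_0\}$. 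Since these are precisely the structural facts already recorded in Remarks \ref{remark1}--\ref{remark3} and in hypotheses $\mathbf h_2$, $\mathbf h_3$, the argument goes through; the content of the lemma, as the authors note, is essentially quoted from Section 2 of \cite{scha}.
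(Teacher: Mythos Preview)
The paper does not prove this lemma; it merely cites Section~2 of \cite{scha}. Your implicit--function--theorem argument is the standard one and is essentially what Schatzman does, so in that sense you are aligned with the source.

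Two minor points deserve attention. First, your computation of $\partial_r F$ has a sign slip (and the two displayed expressions you give are mutually inconsistent): the correct value is $\partial_r F(u,r)=\|\bar u'\|_j^2-\langle u-\bar u(\cdot-r),\bar u''(\cdot-r)\rangle_j$, with a \emph{positive} leading term, which is exactly the denominator in \eqref{derivative}. This does not affect the argument.

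Second, and more substantively, both your uniqueness argument and your mean--value estimate $|\phi(h_1)-\phi(h_0)|\ge c\,|h_1-h_0|$ tacitly assume that $|\phi'|\ge c$ on the \emph{entire} segment between the two translation parameters. Your ``a priori bound'' $\|u-\bar u(\cdot-r)\|_j\le q_j^u+|r-h_j|\,\|\bar u'\|_j$ only helps once you already know $|r-h_j|$ is small, so as written the reasoning is circular. The missing ingredient is a one--line injectivity step for the curve $r\mapsto\bar u(\cdot-r)$: since $\|u-\bar u(\cdot-h_0)\|_0\le q_1^u$ and $\|u-\bar u(\cdot-h_1)\|_0\le q_1^u$, the triangle inequality gives $\|\bar u(\cdot-h_0)-\bar u(\cdot-h_1)\|_0<2\bar q$, and a lower bound of the form $\|\bar u(\cdot-r_1)-\bar u(\cdot-r_2)\|_0\ge c_1\min\{|r_1-r_2|,1\}$ (immediate from $\|\bar u'\|_0>0$ and the asymptotics \eqref{bar-exp}) then forces $|h_0-h_1|$ to be small a priori. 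With this in hand your MVT step is justified and the rest of the proof is correct.
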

In the following we drop the subscript $0$ and write simply $q^u$,
$\|\cdot\|$, etc. instead of $q_0^u$, $\|\cdot\|_0$, etc.

From Lemma \ref{lemmaw} and (\ref{orthogonal}) it follows that
$u\in\mathcal{H}$ can be decomposed in the form
\begin{equation}
  \begin{split}
    &u=\bar{u}(\cdot-h)+v(\cdot-h),\\
    &\langle v,\bar{u}^\prime\rangle=0,
  \end{split}
  \label{decomposition}
\end{equation}
for some $h\in\R$ and $\bar{u}\in\{\bar{u}_-,\bar{u}_+\}$ and that,
provided $q^u<\bar{q}$, $h\in\R$ and $\bar{u}$ are uniquely
determined. Note that from (\ref{decomposition}) we have
\[
v(s)=u(s+h)-\bar{u}(s)
\]
and
\[
\|v\|=q^u.
\]
In particular the decomposition (\ref{decomposition}) applies to the
minimizer $u^L\in\mathcal{A}^L$:

\begin{equation}
  \begin{split}
    &u^L(x,\cdot)=\bar{u}(\cdot-h^L(x))+v^L(x,\cdot-h^L(x)),\\
    &\langle v^L(x,\cdot),\bar{u}^\prime\rangle = 0,\\
  \end{split}
  \label{decompositionL}
\end{equation}
for some $\bar{u}\in\{\bar{u}_-,\bar{u}+\}$.  Given $x\in\R$ we set
$q^L(x)=q^{u^L(x,\cdot)}$ and $q_1^L(x)=q_1^{u^L(x,\cdot)}$ and recall
that
\[
q^L(x)=\|v^L(x,\cdot)\|=\|u^L(x,\cdot)-\bar{u}(\cdot-h^L(x))\|.
\]
In general $h^L(x)$ is not uniquely determined if $q^L(x)$ is not
sufficiently small. In the following, if there is no risk of confusion,
we drop the superscript $L$ and write simply $q(x)$, $v(x,y)$, $h(x)$,
etc.. instead of $q^L(x)$, $v^L(x,y)$, $h^L(x)$, etc..

From the minimality of $u=u^L$ and its smoothness properties
established in (\ref{C2alpha}) and (\ref{LExp}) it follows that $u^L$
satisfies two Hamiltonian identities. This is the content of the
following lemma, where $c_0$ is defined in \eqref{c0def}.
\begin{lemma}
  Set $u=u^L$. Then there exist constants $\omega$ and $\tilde{\omega}$
  such that, for $x\in\R$, it results
  \begin{equation}
    \int_\R\frac{1}{2}\vert u_x(x,y)\vert^2{d} y=\int_\R\Big(W(u(x,y))+\frac{1}{2}\vert u_y(x,y)\vert^2\Big){d} y-c_0-\omega
    \label{hamilton}
  \end{equation}
  and
  \begin{equation}
    \int_\R u_x(x,y)\cdot u_y(x,y){d} y=\tilde{\omega},\quad\text{ for }\;x\in\R.\hskip2cm
    \label{uxuyconst}
  \end{equation}
  Moreover it results
  \begin{equation}
    \begin{split}
      &\omega=\int_\R\Big(W(u(\frac{L}{4},y))+\frac{1}{2}\vert u_y(\frac{L}{4},y)\vert^2)\Big){d} y-c_0\geq 0,\\
      &\tilde{\omega}=0.
    \end{split}
    \label{omega}
  \end{equation}
  \label{properties}
\end{lemma}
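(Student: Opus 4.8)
The plan is to adapt Step~4 of the proof of Theorem~\ref{periodic} to the present infinite-dimensional setting, exploiting the regularity already at hand. By \eqref{C2alpha}, \eqref{LExp} and the companion exponential decay of $Du^L$ and $D^2u^L$ for $|y|$ large, $u=u^L$ is a classical solution of \eqref{elliptic}, and all the integrals entering \eqref{hamilton} and \eqref{uxuyconst} converge, with integrands bounded uniformly in $x$ on $\{|y|\le d_L\}$ and exponentially small for $|y|>d_L$, uniformly in $x$. Hence one may differentiate them in $x$ under the integral sign. I would therefore introduce
\[
G(x)=\int_\R\Big(\tfrac12|u_x(x,y)|^2-\tfrac12|u_y(x,y)|^2-W(u(x,y))\Big)\,dy,\qquad
\tilde G(x)=\int_\R u_x(x,y)\cdot u_y(x,y)\,dy,
\]
and prove that both are constant in $x$.

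Differentiating in $x$ and substituting $u_{xx}=\Delta u-u_{yy}=W_u(u)-u_{yy}$, both integrands collapse to exact $y$-derivatives (for $G'$ the two terms proportional to $W_u(u)\cdot u_x$ cancel), so that
\[
G'(x)=-\int_\R\partial_y\big(u_x\cdot u_y\big)\,dy,\qquad
\tilde G'(x)=\int_\R\partial_y\Big(W(u)-\tfrac12|u_y|^2+\tfrac12|u_x|^2\Big)\,dy.
\]
Each right-hand side equals the difference of the limits at $y=+\infty$ and $y=-\infty$ of the quantity inside $\partial_y$, and these limits vanish because, by \eqref{LExp} and the decay of $Du^L$, $u_x,u_y\to0$ and $W(u)\to W(a_\pm)=0$ as $y\to\pm\infty$. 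Therefore $G\equiv -c_0-\omega$ and $\tilde G\equiv\tilde\omega$ for suitable constants $\omega,\tilde\omega$, with $c_0$ as in \eqref{c0def}; this is precisely \eqref{hamilton} and \eqref{uxuyconst}.

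It remains to identify $\omega$ and $\tilde\omega$, which I would do by evaluating the two identities at $x=L/4$. Since $u^L\in\mathcal{A}^L$ satisfies the brake symmetry $u^L(\tfrac L4+x,y)=u^L(\tfrac L4-x,y)$, differentiation in $x$ gives $u_x^L(\tfrac L4,y)=0$ for every $y$. Plugging $x=L/4$ into \eqref{hamilton} then yields
\[
0=\int_\R\Big(W\big(u(\tfrac L4,y)\big)+\tfrac12|u_y(\tfrac L4,y)|^2\Big)\,dy-c_0-\omega=J_\R\big(u(\tfrac L4,\cdot)\big)-c_0-\omega,
\]
which is the formula for $\omega$ in \eqref{omega}; plugging $x=L/4$ into \eqref{uxuyconst} gives $\tilde\omega=0$. (Alternatively $\tilde\omega=0$ follows at $x=0$: equivariance forces $u(0,\cdot),u_y(0,\cdot)\in\pi_\gamma$ and $u_x(0,\cdot)\perp\pi_\gamma$.) Finally $\omega\ge0$: by \eqref{LExp} the trace $u^L(\tfrac L4,\cdot)$ lies in $\bar{\mathrm{u}}+H^1(\R;\R^m)$ and connects $a_-$ to $a_+$, hence $J_\R\big(u^L(\tfrac L4,\cdot)\big)\ge J_\R(\bar{u}_\pm)=c_0$ by minimality of $\bar{u}_\pm$, so $\omega=J_\R(u^L(\tfrac L4,\cdot))-c_0\ge0$.

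The one step needing genuine care is the justification of differentiation under the integral sign and of the vanishing of the boundary terms at $y=\pm\infty$; this rests entirely on the \emph{uniform in $x$} exponential estimates for $u^L-a_\pm$, $Du^L$ and $D^2u^L$ on $\{|y|>d_L\}$ together with the global bound \eqref{C2alpha}. If one preferred not to use \eqref{C2alpha}, the same identities could be obtained, exactly as in Step~4 of Section~\ref{finite1}, from inner variations respecting the constraints defining $\mathcal{A}^L$: an $L$-periodic reparametrization of $x$ supported away from the symmetry axes $x=0$ and $x=L/4$ (extended by the brake and equivariance symmetries) yields \eqref{hamilton} through the stationarity of $\mathcal{J}_{(0,L)\times\R}$, while a one-parameter family of $x$-dependent vertical shifts $u(x,y)\mapsto u\big(x,y+\xi\rho(x)\big)$ yields \eqref{uxuyconst}.
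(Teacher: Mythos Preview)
Your argument is correct. The paper's own proof simply cites \cite{scha} and \cite{f} for the two Hamiltonian identities and then, exactly as you do, observes that the brake symmetry $u(\tfrac{L}{4}-x,y)=u(\tfrac{L}{4}+x,y)$ forces $u_x(\tfrac{L}{4},y)=0$, which identifies $\omega$ and $\tilde\omega$. You have supplied the direct computation behind the cited references---namely, that $G$ and $\tilde G$ have derivatives that collapse to exact $y$-divergences after substituting $u_{xx}=W_u(u)-u_{yy}$, and that the boundary terms at $y=\pm\infty$ vanish by the exponential estimates---and you have also spelled out the inequality $\omega\ge0$ via the minimality of $\bar u_\pm$, which the paper states but does not argue in the proof. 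So your proof is self-contained where the paper is not, but the line of reasoning and the evaluation at $x=L/4$ are the same.
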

\begin{proof}
The identities (\ref{hamilton}) and (\ref{uxuyconst}) are well known,
see for instance \cite{scha} or \cite{f}. To prove (\ref{omega}) we
observe that $u(\frac{L}{4}-x,y)=u(\frac{L}{4}+x,y)$ implies
$u_x(\frac{L}{4},y)=0$.
\end{proof}
\begin{lemma}
  The constant $\bar{q}$ in Lemma \ref{lemmaw} can be chosen such that, if
  \begin{equation}
    0<q(x)\leq q_1(x)\leq \bar{q},\;\;x\in I,
    \label{stimediq}
  \end{equation}
  for some interval $I\subset\R$, then, for $x\in I$ the maps
  $h(x)=h^L(x)$, $v(x,y)=v^L(x,y)$ $\bar{u}\in\{\bar{u}_-,\bar{u}_+\}$
  in the decomposition (\ref{decompositionL}) are uniquely determined
  and are smooth functions of $x\in I$.  With
  $\nu(x,\cdot)=\nu^L(x,\cdot)$ defined by
  $v(x,\cdot)=q(x)\nu(x,\cdot)$, it results
  \begin{equation}
    h^\prime(x)=\frac{\langle v_x(x,\cdot),v_y(x,\cdot)\rangle}{\|\bar{u}^\prime+v_y(x,\cdot)\|^2}
    =\frac{q^2(x)\langle\nu_x(x,\cdot),\nu_y(x,\cdot)\rangle}{\|\bar{u}^\prime+q(x)\nu_y(x,\cdot)\|^2},
    \label{hprime}
  \end{equation}
  and
  \begin{equation}
    \begin{split}
      &\|u_x(x,\cdot)\|^2=\|v_x(x,\cdot)\|^2-\frac{\langle v_x(x,\cdot),v_y(x,\cdot)\rangle^2}{\|\bar{u}^\prime+v_y(x,\cdot)\|^2}\\
      &=q^\prime(x)^2+q^2(x)\|\nu_x(x,\cdot)\|^2
      -q^4(x)\frac{\langle\nu_x(x,\cdot),\nu_y(x,\cdot)\rangle^2}{\|\bar{u}^\prime+q(x)\nu_y(x,\cdot)\|^2}.
    \end{split}
    \label{xenergy}
  \end{equation}
  Moreover the map
  \[
  (0,q(x)]\ni p\rightarrow f(p,x)\|\nu_x(x,\cdot)\|^2:=
    p^2\|\nu_x(x,\cdot)\|^2-p^4\frac{\langle\nu_x(x,\cdot),\nu_y(x,\cdot)\rangle^2}{\|\bar{u}^\prime+p\nu_y(x,\cdot)\|^2}
  \]
  is non-negative and non-decreasing for each fixed $x\in I$.
  \label{represent}
\end{lemma}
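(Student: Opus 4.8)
The plan is to settle uniqueness and smoothness by quoting Lemma~\ref{lemmaw}, to derive the two displayed identities by differentiating the defining relations of the decomposition (\ref{decompositionL}) and inserting the Hamiltonian identities of Lemma~\ref{properties}, and to prove the last assertion by a Cauchy--Schwarz estimate made quantitative through an a priori smallness bound.

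On $I$ we have $q(x)=q_0^{u^L(x,\cdot)}\le q_1^{u^L(x,\cdot)}=q_1(x)\le\bar q$, since the $L^2$-distance to a translate of $\bar u_\pm$ is dominated by the corresponding $H^1$-distance. Shrinking the constant $\bar q$ of Lemma~\ref{lemmaw} if necessary, that lemma then applies at every $x\in I$: it fixes $\bar u\in\{\bar u_-,\bar u_+\}$, which is locally constant and hence constant on the interval $I$, and determines $h(x)=h_0(u^L(x,\cdot))$ and $v(x,\cdot)=u^L(x,\cdot+h(x))-\bar u$ uniquely. By the uniform regularity (\ref{C2alpha}) and the decay (\ref{LExp}), together with the exponential decay of $\bar{\mathrm u}$, the map $x\mapsto u^L(x,\cdot)$ is of class $C^1$ into $\mathcal H^0$ with derivative $u_x^L(x,\cdot)$; since $h_0$ is $C^3$ on (an open subset of) $\mathcal H^0$ by Lemma~\ref{lemmaw}, $h$ and therefore $v$ are smooth (at least $C^1$) functions of $x\in I$.

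Differentiating the identity $u^L(x,y)=\bar u(y-h(x))+v(x,y-h(x))$ in $y$ and in $x$ and evaluating at $y=s+h(x)$ gives, writing $(\cdot+h)$ for evaluation at $s+h(x)$, the relations $u_y^L(x,\cdot+h)=\bar u'+v_y$ and $v_x=u_x^L(x,\cdot+h)+(\bar u'+v_y)h'$; differentiating $\langle v(x,\cdot),\bar u'\rangle=0$ in $x$ gives $\langle v_x,\bar u'\rangle=0$. By translation invariance of the $L^2$ inner product and Lemma~\ref{properties} (identities (\ref{uxuyconst}) and (\ref{omega})),
\[
\langle u_x^L(x,\cdot+h),\,\bar u'+v_y\rangle=\langle u_x^L(x,\cdot),u_y^L(x,\cdot)\rangle=\tilde\omega=0 .
\]
Taking the inner product of $v_x=u_x^L(x,\cdot+h)+(\bar u'+v_y)h'$ with $\bar u'+v_y$, and then with $\bar u'$, yields $h'=\langle v_x,\bar u'+v_y\rangle/\|\bar u'+v_y\|^2=\langle v_x,v_y\rangle/\|\bar u'+v_y\|^2$, which is (\ref{hprime}) (here $\|\bar u'+v_y\|=\|u_y^L(x,\cdot)\|>0$ since $u^L(x,\cdot)$ is non-constant). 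Taking instead the squared $L^2$ norm of $u_x^L(x,\cdot+h)=v_x-(\bar u'+v_y)h'$ and substituting $\langle v_x,\bar u'+v_y\rangle=\langle v_x,v_y\rangle=h'\|\bar u'+v_y\|^2$ gives the first line of (\ref{xenergy}). The second lines of both formulas follow by writing $v=q\nu$, so $v_x=q'\nu+q\nu_x$, $v_y=q\nu_y$, and using $\|\nu\|\equiv1$, which forces $\langle\nu,\nu_x\rangle=0$, together with $\langle\nu,\nu_y\rangle=\tfrac12\int_\R(|\nu|^2)_y\,dy=0$.

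For the monotonicity statement we may assume $\nu_x(x,\cdot)\neq0$, else $f(p,x)\|\nu_x\|^2\equiv0$. From $\langle v,\bar u'\rangle=0$ and $q(x)>0$ we get $\langle\nu,\bar u'\rangle=0$, hence $\langle\nu_x,\bar u'\rangle=0$; with $w_p:=\bar u'+p\nu_y$ this gives $\langle\nu_x,w_p\rangle=p\langle\nu_x,\nu_y\rangle$, so
\[
f(p,x)\|\nu_x\|^2=p^2\Bigl(\|\nu_x\|^2-\frac{\langle\nu_x,w_p\rangle^2}{\|w_p\|^2}\Bigr)\ge0
\]
by Cauchy--Schwarz. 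Differentiating in $p$, non-negativity of the derivative is equivalent to $\|\nu_x\|^2\|w_p\|^4\ge\langle\nu_x,w_p\rangle^2(\|w_p\|^2+\langle\bar u',w_p\rangle)$. When $\langle\bar u',w_p\rangle\le0$ this is immediate from $\langle\nu_x,w_p\rangle^2\le\|\nu_x\|^2\|w_p\|^2$; when $\langle\bar u',w_p\rangle>0$ one uses the sharper bound $\langle\nu_x,w_p\rangle^2\le\|\nu_x\|^2\bigl(\|w_p\|^2-\langle\bar u',w_p\rangle^2/\|\bar u'\|^2\bigr)$, a consequence of $\langle\nu_x,\bar u'\rangle=0$, which reduces the claim to $-p\|w_p\|^2\langle\bar u',\nu_y\rangle\le\langle\bar u',w_p\rangle^2$, clearly true when $\langle\bar u',\nu_y\rangle\ge0$. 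The genuinely delicate case, and the main obstacle, is $\langle\bar u',\nu_y\rangle<0$ with $\langle\bar u',w_p\rangle>0$: here I would invoke the a priori bound $p\|\nu_y\|\le q(x)\|\nu_y(x,\cdot)\|=\|v_y(x,\cdot)\|\le\|v(x,\cdot)\|_1\le\tilde C\,q_1(x)\le\tilde C\bar q$ from (\ref{h-h}), which gives $\|w_p\|^2\le(\|\bar u'\|+\tilde C\bar q)^2$ and $\langle\bar u',w_p\rangle\ge\|\bar u'\|(\|\bar u'\|-\tilde C\bar q)>0$, reducing the inequality to $(\|\bar u'\|+\tilde C\bar q)^2\,\tilde C\bar q\le\|\bar u'\|(\|\bar u'\|-\tilde C\bar q)^2$. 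Since $\|\bar u'\|=\|\bar u_\pm'\|$ is a fixed positive constant (finite by (\ref{bar-exp}), nonzero because $a_-\neq a_+$), this holds once $\bar q$ is shrunk further, depending only on $\tilde C$ and $\|\bar u'\|$; this last shrinking of $\bar q$ is precisely the freedom claimed in the statement, and it is the restriction $p\in(0,q(x)]$ that makes the bound on $p\|\nu_y\|$ available.
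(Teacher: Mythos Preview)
Your proof is correct and, for the uniqueness/smoothness and the two identities (\ref{hprime}) and (\ref{xenergy}), follows exactly the paper's route: differentiate the decomposition, feed in the Hamiltonian identity $\langle u_x,u_y\rangle=0$ from Lemma~\ref{properties}, and use $\langle v_x,\bar u'\rangle=0$ together with $\langle\nu,\nu_x\rangle=\langle\nu,\nu_y\rangle=0$.

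The one place you diverge is the monotonicity of $p\mapsto f(p,x)\|\nu_x\|^2$. The paper simply computes
\[
D_pf(p,\cdot)=2p-4p^3\frac{\langle\nu_x/\|\nu_x\|,\nu_y\rangle^2}{\|\bar u'+p\nu_y\|^2}
+2p^4\frac{\langle\nu_x/\|\nu_x\|,\nu_y\rangle^2\langle\bar u'+p\nu_y,\nu_y\rangle}{\|\bar u'+p\nu_y\|^4},
\]
bounds the two correction terms crudely via $p\|\nu_y\|\le\tilde C\bar q$ and $\|\bar u'+p\nu_y\|\ge\|\bar u'\|-\tilde C\bar q$, and reads off $D_pf>0$ once $\bar q\le\|\bar u'\|/(3\tilde C)$; non\-negativity then comes for free from $f(0^+,x)=0$. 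Your route instead exploits the extra orthogonality $\langle\nu_x,\bar u'\rangle=0$ to rewrite the function as $p^2(\|\nu_x\|^2-\langle\nu_x,w_p\rangle^2/\|w_p\|^2)$, which gives non\-negativity immediately by Cauchy--Schwarz and without any smallness; for monotonicity you then run a case analysis that, in the only genuinely nontrivial case, falls back on the same bound $p\|\nu_y\|\le\tilde C\bar q$ and an explicit inequality in $\bar q$. Both arguments are valid and yield a threshold on $\bar q$ of the same order $\|\bar u'\|/\tilde C$; yours buys a cleaner conceptual reason for non\-negativity, while the paper's direct estimate of $D_pf$ is shorter and avoids the case split.
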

\begin{proof}
From (\ref{decompositionL}) with $u=u^L$, $v=v^L$ we obtain
\[
\begin{split}
  & u_x(x,\cdot)=-h^\prime(x)\Big(\bar{u}^\prime(\cdot-h(x))+v_y(x,\cdot-h(x))\Big)+v_x(x,\cdot-h(x)),\\
  & u_y(x,\cdot)=\bar{u}^\prime(\cdot-h(x))+v_y(x,\cdot-h(x)).
\end{split}
\]
and therefore Lemma \ref{properties} and (\ref{decompositionL}) that
implies
\[
\langle v_x(x,\cdot),\bar{u}^\prime\rangle=0,\;\;x\in I,
\]
yield
\begin{equation}
  \begin{split}
    &0=\langle u_x(x,\cdot),u_y(x,\cdot)\rangle=
    -h^\prime(x)\|\bar{u}^\prime+v_y(x,\cdot)\|^2+\langle v_x(x,\cdot),v_y(x,\cdot)\rangle.
  \end{split}
  \label{hprime1}
\end{equation}
From assumption \eqref{stimediq} and (\ref{h-h}) we have
$\|v_y(x,\cdot)\|\leq\|v\|_1\leq\tilde{C}
q_1(x)\leq\tilde{C}\bar{q}$ and
$\bar{q}\leq\frac{\|\bar{u}^\prime\|}{2\tilde{C}}$ implies
\[
  \frac{1}{2}\|\bar{u}^\prime\|\leq\|\bar{u}^\prime+v_y(x,\cdot)\|\leq\frac{3}{2}\|\bar{u}^\prime\|.
\]
Therefore (\ref{hprime1}) can be solved for $h^\prime(x)$ and the
first expression of $h^\prime(x)$ in (\ref{hprime}) is
established. For the other expression we observe that $\langle
v_x,v_y\rangle =\langle
q_x\nu+q\nu_x,q\nu_y\rangle=q^2\langle\nu_x,\nu_y\rangle$ that follows
from $\langle\nu(x,\cdot-r),\nu(x,\cdot-r)\rangle=1$, for $r\in\R$
which implies $\langle\nu_y(x,\cdot),\nu(x,\cdot)\rangle=0$.  A
similar computation that also uses (\ref{hprime}) yields
(\ref{xenergy}).

It remains to prove the monotonicity of $p\rightarrow
f(p,x)\|\nu_x(x,\cdot)\|^2$. We can assume $\|\nu_x\|>0$ otherwise
there is nothing to be proved. We have
\[
p\|\nu_y(x,\cdot)\|\leq
q(x)\|\nu_y(x,\cdot)\|=\|v_y(x,\cdot)\|\leq\tilde{C}\bar{q},
\]
and therefore
\[
\begin{split}
  &D_pf(p,\cdot)=2p-4p^3\frac{\langle\frac{\nu_x}{\|\nu_x\|},\nu_y\rangle^2}{\|\bar{u}^\prime+p\nu_y\|^2}
  +2p^4\frac{\langle\frac{\nu_x}{\|\nu_x\|},\nu_y\rangle^2\langle\bar{u}^\prime+p\nu_y,\nu_y\rangle}{\|\bar{u}^\prime+p\nu_y\|^4}\\
  &\geq 2p\Big(1-2\frac{(\tilde{C}\bar{q})^2}{(\|\bar{u}^\prime\|-\tilde{C}\bar{q})^2}
  -\frac{(\tilde{C}\bar{q})^3}{(\|\bar{u}^\prime\|-\tilde{C}\bar{q})^3}\Big).
\end{split}
\]
This proves $D_pf(p,\cdot)>0$ for
$\bar{q}\leq\frac{\|\bar{u}^\prime\|}{3\tilde{C}}$.  The proof is
complete.
\end{proof}
Next we list some properties of the \emph{ effective potential }
$J_\R(u)-c_0$ that depend on the decomposition (\ref{decomposition})
of $u$. Define
\[
  \mathcal{W}(v)=J_\R(\bar{u}+v)-J_\R(\bar{u}).
\]
where $v$ is as in (\ref{decomposition}) and $u\in\mathcal{H}^1$.  If
we set $v=q\nu$, with $q=\|v\|\neq 0$, $\mathcal{W}$ can be considered
as a function of $q\in\R$ and $\nu\in H^1(\R;\R^m)$, $\|\nu\|=1$. We
have (see \cite{f})
\begin{lemma}
  Assume that $\vert v^\prime\vert\leq C$ for some $C>0$. Then
  \begin{equation}
    \|v\|_{L^\infty(\R;\R^m)}\leq C_1\|v\|^\frac{2}{3},
    \label{linfty}
  \end{equation}
  for some $C_1>0$. The constant $\bar{q}>0$ in Lemma \ref{lemmaw} can
  be chosen such that the effective potential $\mathcal{W}(q\nu)$ is
  increasing in $q$ for $q\in[0,\bar{q}]$ and there is $\mu>0$ such that
  \begin{equation}
    \frac{\partial^2}{\partial q^2}\mathcal{W}(q\nu)\geq\mu(1+\|\nu^\prime\|^2),\quad q\in(0,\bar{q}],
      \label{W2geqq2}
  \end{equation}
  and
  \[
    \begin{split}
      &\mathcal{W}(q\nu)\geq\frac{1}{2}\mu q^2(1+\|\nu^\prime\|^2),\quad q\in(0,\bar{q}]\\
        &\Leftrightarrow\\
        &\mathcal{W}(v)\geq\frac{1}{2}\mu\|v\|_1^2,\quad\|v\|\in(0,\bar{q}].
    \end{split}
  \]
  \label{lemmaw-true}
\end{lemma}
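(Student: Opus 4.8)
The plan is to prove the two assertions of the lemma separately. The interpolation inequality \eqref{linfty} is elementary: since $v=u-\bar u(\cdot-h)\in H^1(\R;\R^m)$ is continuous and vanishes at $\pm\infty$, the value $a:=\|v\|_{L^\infty}$ is attained at some $s_0$, and if $|v'|\leq C$ then $|v(s)|\geq a/2$ for $|s-s_0|\leq a/(2C)$, so $\|v\|^2\geq(a/2)^2(a/C)$; this gives $\|v\|_{L^\infty}\leq(4C)^{1/3}\|v\|^{2/3}$, i.e. $C_1=(4C)^{1/3}$. The real content is the coercivity of $\mathcal{W}$, which I would derive from a spectral gap for the linearization of $J_\R$ at $\bar u$ provided by hypotheses $\mathbf{h}_2$ and $\mathbf{h}_3$.

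For the coercivity, fix $\nu$ with $\|\nu\|=1$, $\langle\nu,\bar u'\rangle=0$, and study $g(q):=\mathcal{W}(q\nu)=J_\R(\bar u+q\nu)-J_\R(\bar u)$. Differentiating under the integral sign gives $g(0)=0$, $g'(0)=0$ (because $\bar u$ solves \eqref{system}, so $\bar u$ is a critical point of $J_\R$), and $g''(q)=\|\nu'\|^2+\int_\R W_{uu}(\bar u+q\nu)\,\nu\cdot\nu\,ds$. Hence, once the pointwise-in-$q$ bound $g''(q)\geq\mu(1+\|\nu'\|^2)$ for $q\in(0,\bar q]$ is established, Taylor's formula with integral remainder gives $g'(q)=\int_0^q g''(t)\,dt\geq\mu(1+\|\nu'\|^2)q>0$, which yields the monotonicity of $q\mapsto\mathcal{W}(q\nu)$ on $[0,\bar q]$, and $\mathcal{W}(q\nu)=\int_0^q(q-t)g''(t)\,dt\geq\tfrac12\mu q^2(1+\|\nu'\|^2)$, which is \eqref{W2geqq2} together with its integrated form; the equivalent estimate $\mathcal{W}(v)\geq\tfrac12\mu\|v\|_1^2$ then follows from $\|v\|_1^2=\|v\|^2+\|v'\|^2=q^2(1+\|\nu'\|^2)$. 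So everything reduces to bounding $g''(q)$ from below.

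To obtain that bound, first take $q=0$: $g''(0)=\langle T\nu,\nu\rangle$ where $Tw=-w''+W_{uu}(\bar u)w$ acts on $L^2(\R;\R^m)$. By $\mathbf{h}_2$ and \eqref{second-derW} the asymptotic operators $-w''+W_{uu}(a_\pm)w$ have spectrum contained in $[\gamma^2,+\infty)$, so the essential spectrum of $T$ is contained in $[\gamma^2,+\infty)$; since $\bar u$ is a global minimizer of $J_\R$ the quadratic form $\langle T\cdot,\cdot\rangle$ is nonnegative, so $0$ is the bottom of the spectrum of $T$, and by $\mathbf{h}_3$ it is a simple eigenvalue with eigenfunction $\bar u'\in L^2$ (which decays exponentially by Remark \ref{remark3}). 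Therefore there is $\lambda_1>0$ with $\langle T\nu,\nu\rangle\geq\lambda_1\|\nu\|^2=\lambda_1$ for every unit $\nu\perp\bar u'$, and combining this with the trivial bound $\langle T\nu,\nu\rangle\geq\|\nu'\|^2-M_1$ (using $|W_{uu}(\bar u)|\leq M_1:=\sup_s|W_{uu}(\bar u(s))|$) by a convex combination gives $\langle T\nu,\nu\rangle\geq\mu_0(1+\|\nu'\|^2)$ for some $\mu_0>0$. To pass from $q=0$ to $q\in(0,\bar q]$, note that $W\in C^3$ makes $W_{uu}$ Lipschitz on bounded sets, while $\|q\nu\|_{L^\infty}=\|v\|_{L^\infty}\leq C_1q^{2/3}$ (first part) keeps $\bar u+tq\nu$ in a fixed bounded set for $t\in[0,1]$; hence $\bigl|\int_\R(W_{uu}(\bar u+q\nu)-W_{uu}(\bar u))\nu\cdot\nu\,ds\bigr|\leq L\,\|v\|_{L^\infty}\|\nu\|^2\leq LC_1q^{2/3}$, so that $g''(q)\geq\mu_0(1+\|\nu'\|^2)-LC_1q^{2/3}$. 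Shrinking $\bar q$ (the constant of Lemma \ref{lemmaw}) so that $LC_1\bar q^{2/3}\leq\mu_0/2$ yields $g''(q)\geq\tfrac12\mu_0(1+\|\nu'\|^2)$ on $(0,\bar q]$, i.e. \eqref{W2geqq2} with $\mu=\mu_0/2$.

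The delicate step is the last one: the coercivity of $\langle T\cdot,\cdot\rangle$ must be arranged so as to carry the full weight $1+\|\nu'\|^2$ rather than merely $\|\nu\|^2$, and the error committed in freezing $W_{uu}$ at $\bar u$ must be absorbed uniformly over all admissible $\nu$ as $q\to0$. A bound of that error in terms of $\|\nu'\|$ would be circular; the way out is the $L^\infty$ bound $\|v\|_{L^\infty}\leq C_1\|v\|^{2/3}$, which is exactly why the hypothesis $|v'|\leq C$ is built into the statement of the lemma.
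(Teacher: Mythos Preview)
Your proof is correct. The paper does not actually prove Lemma~\ref{lemmaw-true}; it merely quotes the statement from \cite{f}. Your argument---the elementary interpolation for \eqref{linfty}, then the spectral gap $\langle T\nu,\nu\rangle\geq\lambda_1$ on $(\bar u')^\perp$ coming from $\mathbf{h}_2$, $\mathbf{h}_3$ and the minimality of $\bar u$, upgraded by convex combination with the trivial bound $\langle T\nu,\nu\rangle\geq\|\nu'\|^2-M_1$ to the $H^1$-weighted form $\langle T\nu,\nu\rangle\geq\mu_0(1+\|\nu'\|^2)$, and finally perturbed to $q\in(0,\bar q]$ via the $L^\infty$ control $\|q\nu\|_{L^\infty}\leq C_1 q^{2/3}$ from the first part---is exactly the natural route and is what one expects the cited reference to contain. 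Your closing remark is also on point: the hypothesis $|v'|\leq C$ is precisely what makes the freezing error $o(1)$ in $q$ uniformly over all admissible $\nu$, and this is how the lemma is used later in the paper (see the derivation of \eqref{vy} and the proof of Lemma~\ref{expD}, where the pointwise bound on $v_y$ comes from \eqref{C2alpha} and \eqref{bar-exp}).
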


Lemma \ref{lemmaw-true} describes the properties of the effective
potential $\mathcal{W}$ in a neighborhood of one of the connections
$\bar{u}_\pm$. We also need a lower bound for the effective potential
away from a neighborhood of the connections. We have the following
result, see Corollary 3.2 in \cite{scha}) or Proposition \ref{away1}
in the Appendix, where we give an elementary proof.

\begin{lemma}
  For each ${p}>0$ there exists $e_p>0$ such that, if $u\in\mathcal{H}^1$ satisfies
\item
  \[
  q_1^u\geq {p},
  \]
  then
  \[
  J_\R(u) - c_0\geq e_{p}.
  \]
  Moreover $e_{p}$ is continuous in $p$ and for $p\leq\|v\|_1$, $\|v\|_1$ small, it results
  \begin{equation}
    e_{p}\leq J_\R(\bar{u}+v)-c_0\leq C^1\|v\|_1^2,\;\;
    v\in H^1(\R;\R^m),\;\bar{u}\in\{\bar{u}_-,\bar{u}_+\},
    \label{J-upper0}
  \end{equation}
  with $C^1>0$ a constant.
  \label{away}
\end{lemma}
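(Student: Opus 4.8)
The plan is to prove the lower bound $J_\R(u)-c_0\geq e_p$ when $q_1^u\geq p$ by a compactness/concentration argument, then derive continuity and the quadratic upper bound separately. First I would assume, for contradiction, that $\inf\{J_\R(u)-c_0 : q_1^u\geq p\}=0$ and pick a minimizing sequence $u_n\in\mathcal H^1$ with $q_1^{u_n}\geq p$ and $J_\R(u_n)\to c_0$. Since $J_\R(u_n)$ is bounded, $\|u_n'\|$ is bounded and, using $W\geq 0$ together with the coercivity supplied by the non-degeneracy of $a_\pm$ in \eqref{second-derW}, one controls $\|u_n-\bar{\mathrm u}\|$ away from the ends; so $u_n$ is bounded in $\mathcal H^1$. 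Passing to a subsequence, $u_n\rightharpoonup u_\infty$ weakly in $\mathcal H^1$ and locally uniformly. By lower semicontinuity $J_\R(u_\infty)\leq c_0$; since $c_0=J_\R(\bar u_\pm)$ is the infimum of $J_\R$ on $\mathcal H^1$ over maps connecting $a_-$ to $a_+$, either $u_\infty$ connects $a_-$ to $a_+$ and is a global minimizer — hence $u_\infty=\bar u_\pm(\cdot-r)$ for some $r$ by the uniqueness-up-to-translation hypothesis — or mass escapes to $\pm\infty$.

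The main obstacle is exactly this dichotomy: ruling out loss of compactness at infinity, and upgrading weak/local convergence to convergence in $\mathcal H^1$ so that the constraint $q_1^{u_n}\geq p$ passes to the limit to give the contradiction $q_1^{u_\infty}\geq p$ while $u_\infty$ is a translate of $\bar u_\pm$. To handle this I would argue as in the standard heteroclinic arguments (Schatzman \cite{scha}, or \cite{f}): the exponential decay estimates \eqref{second-derW} force $u_n(y)$ to be within $r_0$ of $a_\pm$ for $|y|$ past a threshold that is uniform in $n$ (otherwise the excess energy near that region is bounded below, contradicting $J_\R(u_n)\to c_0$), so no mass can leak to infinity; on the remaining compact window the local uniform convergence plus Fatou on the tails gives $\|u_n-u_\infty\|_1\to 0$. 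Then $q_1^{u_\infty}=\lim q_1^{u_n}\geq p$, yet $u_\infty$ is a translate of some $\bar u_\pm$, so $q_1^{u_\infty}=0$, a contradiction. This produces $e_p:=\inf\{J_\R(u)-c_0: q_1^u\geq p\}>0$. (Alternatively one simply invokes Proposition \ref{away1} from the Appendix, where this is proved elementarily.)

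For the continuity of $p\mapsto e_p$: monotonicity is immediate since $\{q_1^u\geq p\}$ shrinks as $p$ grows, so $e_p$ is nondecreasing; left/right continuity follows because the constraint set depends continuously on $p$ and $J_\R-c_0$ is continuous on $\mathcal H^1$, so one can perturb a near-optimal competitor for $e_{p'}$ into one for $e_p$ with vanishing change in energy as $p'\to p$. Finally, for \eqref{J-upper0}: the left inequality is just the definition of $e_p$ applied to $u=\bar u+v$ (valid whenever $q_1^u\geq p$, which holds once $\|v\|_1\geq p$ since the infimum defining $q_1^u$ is controlled); the right inequality is a Taylor expansion of $J_\R$ at its critical point $\bar u$ — the first variation vanishes, and $W\in C^3$ together with $|\bar u|,|v|$ bounded gives $J_\R(\bar u+v)-J_\R(\bar u)=\tfrac12\langle T v,v\rangle+O(\|v\|_1^3)\leq C^1\|v\|_1^2$ for $\|v\|_1$ small, using that $T=-\partial_{yy}+W_{uu}(\bar u)$ is bounded on $H^1$. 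I expect the compactness step to be the only genuinely delicate point; everything else is routine once the exponential-decay control from \eqref{second-derW} and \eqref{bar-exp} is in hand.
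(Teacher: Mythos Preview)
Your overall strategy---contradiction via a minimizing sequence converging to a translate of $\bar u_\pm$, then passing the constraint $q_1^{u_n}\geq p$ to the limit---is sound and matches the skeleton of the paper's argument. However, the justification ``local uniform convergence plus Fatou on the tails gives $\|u_n-u_\infty\|_1\to 0$'' does not work as written: Fatou gives only $\int\liminf\leq\liminf\int$, which is the wrong direction for strong convergence. What you actually need is that $J_\R(u_n)\to c_0=J_\R(u_\infty)$, combined with weak lower semicontinuity of $\tfrac12\int|u'|^2$ and of $\int W(u)$ separately, forces $\int|u_n'|^2\to\int|u_\infty'|^2$ and $\int W(u_n)\to\int W(u_\infty)$; weak-plus-norm convergence then yields $u_n'\to u_\infty'$ strongly in $L^2$. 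Strong $L^2$ convergence of $u_n-u_\infty$ still requires a separate tail argument using $W(a_\pm+z)\geq\tfrac{\gamma^2}{2}|z|^2$ from \eqref{second-derW} together with the $L^1$-convergence of $W(u_n)$ just obtained. You must also center the sequence by translation \emph{before} extracting a weak limit, otherwise drift is not excluded and $u_n-\bar{\mathrm u}$ need not even be bounded in $L^2$; the paper handles this via the normalization \eqref{+=-}.

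The paper's proof (Proposition~\ref{away1}) takes a more explicit route that avoids the strong-convergence issue entirely. It first establishes the lower bound under the weaker $L^2$ constraint $q^u\geq p$: after centering and restricting to maps that stay within $q_0$ of $a_\pm$ outside a fixed window, it computes directly that
\[
J_{(l,+\infty)}(u_j)-J_{(l,+\infty)}(\bar u)\geq -\gamma^2\frac{p^2}{32}+\frac{\gamma^2}{4}\int_l^\infty|v_j|^2\,ds,
\]
so if $v_j=u_j-\bar u\to 0$ uniformly on $[-l,l]$ while $\|v_j\|\geq p$, the $L^2$ mass is forced into the tails and the total energy excess is bounded below by $\gamma^2 p^2/16$, contradicting $J_\R(u_j)\to c_0$. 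It then upgrades from the $L^2$ to the $H^1$ constraint by a short dichotomy: either $\|u-\bar u_\pm(\cdot-r)\|\geq\tilde p(p)$ for all $r$ (reducing to the $L^2$ case), or some $\|u-\bar u(\cdot-\bar r)\|<\tilde p$ forces $\|u'-\bar u'(\cdot-\bar r)\|$ to be large, and a direct expansion of $J_\R(\bar u+v)-c_0$ yields the bound. What the paper's approach buys is that it never needs strong $H^1$ convergence of the minimizing sequence---the contradiction comes from an explicit quantitative lower bound on the tail energy. Your treatment of continuity of $e_p$ and of the quadratic upper bound in \eqref{J-upper0} is fine and agrees with the paper's.
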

Set $u=u^L$ and let
\[
  p\in(0,\bar{q}),
\]
be a number to be chosen later. From (\ref{J-upper0}) there is $p^*<p$
such that $e_{p^*}<e_p$. Let $S_{p^*}\subset[0,L]$ be the complement
of the set
\[
\tilde{S}_{p^*}=\{x\in[0,L]:J_\R(u(x,\cdot))-c_0>e_{p^*}\}.
\]
From (\ref{energy-bound1}) we have
\[
e_{p^*}\vert\tilde{S}_{p^*}\vert\leq\int_0^L\Big(J_\R(u(x,\cdot))-c_0\Big)dx\leq
C_0,
\]
which implies
\[
\vert\tilde{S}_{p^*}\vert\leq\frac{C_0}{e_{p^*}},\;\;\vert{S}_{p^*}\vert\geq
L-\frac{C_0}{e_{p^*}}.
\]
For $x\in{S}_{p^*}$ we have $J_\R(u(x,\cdot))-c_0\leq e_{p^*}<e_p$ and
therefore Lemma \ref{away} implies $q_1(x)<p$. It follows $q(x)\leq
q_1(x)\leq \bar{q}$ and Lemma \ref{represent}
implies the uniqueness of the decomposition
(\ref{decompositionL}).  On the other hand Lemma~\ref{lemmaw-true} yields
\begin{equation}
  \|v_y(x,\cdot)\|^2\leq\frac{2}{\mu}\Big(J_\R(u(x,\cdot))-c_0\Big)\leq\frac{2
    e_p}{\mu},\;\;x\in{S}_{p^*}.
  \label{stimavy}
\end{equation}
We fix $p$ so that
\[
\frac{2 e_p}{\mu}\leq\frac{1}{(1+\sqrt{2})^2}\|\bar{u}^\prime\|^2.
\]
With this choice of $p$ we have
\begin{equation}
  \|v_y(x,\cdot)\|^2
  \leq\frac{1}{(1+\sqrt{2})^2}\|\bar{u}^\prime\|^2,\;\;x\in{S}_{p^*}.
  \label{vy-in-sp}
\end{equation}
We also have
\begin{equation}
  \|v_x(x,\cdot)\|^2\leq 4\Big(J_\R(u(x,\cdot))-c_0\Big),\;\;x\in{S}_{p^*}.
  \label{vx-in-sp}
\end{equation}
To see this, note that from (\ref{hamilton}) and $\omega\geq 0$ it follows
\begin{equation}
  \frac{1}{2}\|u_x(x,\cdot)\|^2\leq J_\R(u(x,\cdot))-c_0,\;\;x\in[0,L],
  \label{ux-hamilton}
\end{equation}
and that from (\ref{xenergy}) and (\ref{vy-in-sp}) it follows
\[
\|v_x(x,\cdot)\|^2\leq 2\|u_x(x,\cdot)\|^2,\;\;x\in{S}_{p^*}.
\]
From (\ref{hprime}), \eqref{stimavy}, (\ref{vy-in-sp}) and
(\ref{vx-in-sp}) we obtain
\begin{equation}
  \int_{{S}_{p^*}}\vert h^\prime(x)\vert
  dx\leq\frac{\sqrt{2}(1+\sqrt{2})^2}{\sqrt{\mu}\Vert\bar
    u'\Vert^2}\int_{{S}_{p^*}}\Big(J_\R(u(x,\cdot))-c_0\Big)dx
  \leq\frac{\sqrt{2}(1+\sqrt{2})^2}{\sqrt{\mu}\Vert\bar u'\Vert^2}C_0.
  \label{int-hprime}
\end{equation}
\begin{lemma}
  There is a constant $C_h>0$ independent of $L$ such that
  \[
  \vert h(x)-h(x')\vert\leq C_h,\;\;x,x'\in{S}_{p^*}.
  \]
  \label{h-bounded}
\end{lemma}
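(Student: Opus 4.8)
The plan is to bound the total variation of $h$ along the segment from $x$ to $x^\prime$ by separating the portion of that segment lying in $S_{p^*}$, on which $h$ is smooth and $\int_{S_{p^*}}|h^\prime|\le C_1$ by \eqref{int-hprime}, from the portion lying in the open set $\tilde S_{p^*}\cap(x,x^\prime)$, which I write as a countable disjoint union of open intervals (``gaps'') $(\alpha_k,\beta_k)$, $\alpha_k,\beta_k\in S_{p^*}$. The key quantitative input is that $|\tilde S_{p^*}|\le C_0/e_{p^*}=:m_0$ is independent of $L$, so $\sum_k\ell_k\le m_0$ with $\ell_k:=\beta_k-\alpha_k$, while $\sum_k I_k\le 2C_0$ with $I_k:=\int_{\alpha_k}^{\beta_k}\|u_x(s,\cdot)\|^2\,ds$ by the Remark following \eqref{energy-bound1}. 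After shrinking $\bar q$ below $\tfrac12\min_r\|\bar u_+-\bar u_-(\cdot-r)\|$, so that $q^L<\bar q$ forces the closest connection $\bar u$ to be unique and locally constant in $x$, telescoping $h$ over the components of $S_{p^*}\cap[x,x^\prime]$ and across the gaps gives
\[
|h(x)-h(x^\prime)|\ \le\ C_1+\sum_k|h(\beta_k)-h(\alpha_k)|,
\]
so the whole problem reduces to an $L$-independent bound on $\sum_k|h(\beta_k)-h(\alpha_k)|$.

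I would call a gap \emph{good} if $q^L(s)<\bar q$ for all $s\in[\alpha_k,\beta_k]$, and \emph{bad} otherwise. On a good gap the decomposition \eqref{decompositionL} persists with a single $\bar u$, and from \eqref{derivative} together with $\|\bar u^\prime\|^2-\langle u-\bar u(\cdot-h),\bar u^{\prime\prime}\rangle\ge\tfrac12\|\bar u^\prime\|^2$ (valid for $q^L<\bar q$ with $\bar q$ small) the map $u\mapsto h$ is Lipschitz in the $L^2$ norm with constant $2/\|\bar u^\prime\|$; hence $s\mapsto h(s)$ is $C^1$ on $[\alpha_k,\beta_k]$ with $|h^\prime(s)|\le\tfrac{2}{\|\bar u^\prime\|}\|u_x(s,\cdot)\|$, and therefore $|h(\beta_k)-h(\alpha_k)|\le\tfrac{2}{\|\bar u^\prime\|}\sqrt{\ell_k I_k}$. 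Summing over the (possibly infinitely many) good gaps and applying Cauchy--Schwarz,
\[
\sum_{\mathrm{good}}|h(\beta_k)-h(\alpha_k)|\ \le\ \frac{2}{\|\bar u^\prime\|}\Bigl(\sum_k\ell_k\Bigr)^{1/2}\Bigl(\sum_k I_k\Bigr)^{1/2}\ \le\ \frac{2}{\|\bar u^\prime\|}\sqrt{2C_0m_0},
\]
which is independent of $L$.

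For a bad gap, let $s_*\in(\alpha_k,\beta_k)$ be the first point with $q^L(s_*)=\bar q$. Using $q^L(\alpha_k)\le p$ together with $\int_{\alpha_k}^{s_*}\|u_x\|^2\le 2C_0$ (the Remark after \eqref{energy-bound1}, or \eqref{ux-hamilton}) one obtains $\bar q-p\le\|u(s_*,\cdot)-u(\alpha_k,\cdot)\|\le\sqrt{2C_0(s_*-\alpha_k)}$, so every bad gap has length at least $\delta:=(\bar q-p)^2/(2C_0)>0$; since $\sum_k\ell_k\le m_0$, there are at most $m_0/\delta$ of them. On each bad gap the triangle inequality gives, for the indices $i,j\in\{-,+\}$ of the connections selected at $\alpha_k$ and $\beta_k$,
\[
\|\bar u_i(\cdot-h(\alpha_k))-\bar u_j(\cdot-h(\beta_k))\|\ \le\ q^L(\alpha_k)+\|u(\beta_k,\cdot)-u(\alpha_k,\cdot)\|+q^L(\beta_k)\ \le\ 2\bar q+\sqrt{2C_0m_0}=:K_1,
\]
and, since $\bar u_-$ and $\bar u_+$ are distinct heteroclinics joining $a_-$ to $a_+$, each of the four functions $r\mapsto\|\bar u_i(\cdot)-\bar u_j(\cdot-r)\|$ is continuous and diverges to $+\infty$ as $|r|\to\infty$; hence the last display (read, by translation invariance, as $\|\bar u_i(\cdot)-\bar u_j(\cdot-(h(\beta_k)-h(\alpha_k)))\|\le K_1$) forces $|h(\beta_k)-h(\alpha_k)|\le R_1$ for some $R_1=R_1(K_1)$ depending only on the data. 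Summing, $\sum_{\mathrm{bad}}|h(\beta_k)-h(\alpha_k)|\le (m_0/\delta)R_1$, and altogether $|h(x)-h(x^\prime)|\le C_1+\tfrac{2}{\|\bar u^\prime\|}\sqrt{2C_0m_0}+(m_0/\delta)R_1=:C_h$, independent of $L$.

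The part I expect to be delicate is exactly the bad gaps, where the decomposition \eqref{decompositionL} degenerates and the nearest connection may switch between $\bar u_-$ and $\bar u_+$: the resolution is that such a degeneration consumes a definite, $L$-independent amount of $x$-length (through \eqref{ux-hamilton}), so only finitely many bad gaps occur and on each of them the crude estimate from the divergence of $r\mapsto\|\bar u_i-\bar u_j(\cdot-r)\|$ is enough. The possibly infinitely many good gaps cause no trouble only because the estimate there is \emph{linear} in $\|u(\beta_k,\cdot)-u(\alpha_k,\cdot)\|$, which is what makes the sum converge against $\sum_k\ell_k\le m_0$ and $\sum_k I_k\le 2C_0$ via Cauchy--Schwarz; a square-root (Hölder) bound would fail to close the sum.
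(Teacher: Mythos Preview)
Your proof is correct and follows the same overall architecture as the paper's: bound the variation of $h$ on $S_{p^*}$ by \eqref{int-hprime}, write $\tilde S_{p^*}$ as a countable union of gaps $(\alpha_k,\beta_k)$, and split the gaps into two classes. Your ``good'' gaps (where $q<\bar q$ throughout) correspond to the paper's short gaps $\beta_k-\alpha_k\le\lambda:=\bar q^2/(8C_0)$: the paper shows short $\Rightarrow$ good by the same $L^2$--continuity estimate you use to show bad $\Rightarrow$ long. On these gaps both arguments integrate $|h'|\le\tfrac{2}{\|\bar u'\|}\|u_x\|$ and close with Cauchy--Schwarz, exactly as you do.

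The genuine difference is the treatment of the bad (long) gaps. The paper makes a further case split on whether $|h(\beta_k)-h(\alpha_k)|\le 4\bar y$ and, for large jumps, invokes the $L^\infty$ bound \eqref{linfty} together with the pointwise observation \eqref{elem-observ} to obtain a set of $y$'s of measure $\tfrac12|h(\beta_k)-h(\alpha_k)|$ on which $|u(\beta_k,y)-u(\alpha_k,y)|\ge\tfrac14|a_+-a_-|$; integrating yields the linear-in-length bound $|h(\beta_k)-h(\alpha_k)|\le\tfrac{64C_0}{|a_+-a_-|^2}(\beta_k-\alpha_k)$, which then sums against $|\tilde S_{p^*}|$. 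Your argument is shorter and purely $L^2$: you bound $\|\bar u_i(\cdot-h(\alpha_k))-\bar u_j(\cdot-h(\beta_k))\|$ by the triangle inequality and then exploit the elementary fact that $r\mapsto\|\bar u_i-\bar u_j(\cdot-r)\|$ diverges as $|r|\to\infty$ to extract a uniform bound $R_1$ on the jump; combined with your lower bound $\ell_k\ge\delta$ on bad-gap length this gives the same conclusion without the $L^\infty$ machinery or the extra case split. Your route is more elementary; the paper's yields the slightly sharper information that the jump across a long gap is controlled linearly by its length.
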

\begin{proof}
$\tilde{S}_{p^*}$ is the union of a countable family of intervals
  $\tilde{S}_{p^*}=\cup_j(\alpha_j,\beta_j)$.  Therefore, for each
  $x,x'\in{S}_{p^*}$ we have
 \[
 \vert h(x)-h(x')\vert\leq\int_{{S}_{p^*}}\vert h^\prime(x)\vert dx+\sum_j|h(\beta_j)-h(\alpha_j)| .
 \]
Since we have already estimated the first term, see
(\ref{int-hprime}), to complete the proof it remains to evaluate the
sum on the right hand side of this inequality.  Set
$\lambda=\frac{\bar{q}^2}{8C_0}$ and let
$I_\lambda=\{j:\beta_j-\alpha_j\leq\lambda\}$ and
$\tilde{I}_\lambda=\{j:\beta_j-\alpha_j>\lambda\}$. Note that
$\tilde{I}_\lambda$ contains at most $\frac{C_0}{e_{p^*}\lambda}$
intervals. For $j\in I_\lambda$ and $x\in(\alpha_j,\beta_j)$ we have
\[
\begin{split}
  &\vert u(x,y)-u(\alpha_j,y)\vert\leq\vert x-\alpha_j\vert^\frac{1}{2}\Big(\int_{\alpha_j}^x\vert u_x(s,y)\vert^2ds\Big)^\frac{1}{2},\\
  &\|u(x,\cdot)-u(\alpha_j,\cdot)\|^2\leq 2\lambda C_0\leq\frac{\bar{q}^2}{4}.
\end{split}
\]
From this and $\alpha_j\in{S}_{p^*}$, that implies
\[
q(\alpha_j)=\|u(\alpha_j,\cdot)-\bar{u}(\cdot-h(\alpha_j))\|<p\leq\frac{\bar{q}}{2},
\]
we conclude that
\[
q(x) \leq\|u(x,\cdot)-\bar{u}(\cdot-h(\alpha_j)\|
\leq\|u(x,\cdot)-u(\alpha_j,\cdot)\|+q(\alpha_j)\leq\bar{q}.
\]
This and Lemma \ref{lemmaw} imply that, for $x\in(\alpha_j,\beta_j)$,
with $j\in I_\lambda$, $u=u^L$ can be decomposed as in
(\ref{decompositionL}) and that
$h^\prime(x)=(D_uh)u_x(x,\cdot)$. Therefore from (\ref{derivative})
and assuming as we can
$\bar{q}\leq\frac{\|\bar{u}^\prime\|^2}{2\|\bar{u}''\|}$ we have
\[
\vert h^\prime(x)\vert\leq
2\frac{\|u_x(x,\cdot)\|}{\|\bar{u}^\prime\|},\;\;x\in(\alpha_j,\beta_j),\;j\in
I_\lambda.
\]
It follows
\[
\begin{split}
&\sum_{j\in I_\lambda}\vert h(\beta_j)-h(\alpha_j)\vert\leq
\int_{\cup_{j\in I_\lambda}(\alpha_j,\beta_j)}\vert{h}^\prime(x)\vert {d} x
\leq \frac{2}{\|\bar{u}^\prime\|}\int_{\cup_{j\in I_\lambda}(\alpha_j,\beta_j)}\|u_x(x,\cdot)\| {d} x\\
&\leq \frac{2}{\|\bar{u}^\prime\|}\vert\tilde{S}_{p^*}\vert^\frac{1}{2}\Bigl(\int_0^L\|u_x\|^2 {d} x\Bigr)^\frac{1}{2}\leq (2C_0)^\frac{1}{2}\frac{2}{\|\bar{u}^\prime\|}\vert\tilde{S}_{p^*}\vert^\frac{1}{2}.
\end{split}
\]

Assume now $j\in\tilde{I}_\lambda$ and observe that there is a number
$\bar{y}>0$ such that, if $r\geq 2\bar{y}$ and
$y\in[\bar{y},r-\bar{y}]$ or if $r\leq -2\bar{y}$ and
$y\in[r+\bar{y},-\bar{y}]$, it results for
$\mathrm{sg},\tilde{\mathrm{sg}}\in\{-,+\}$
\begin{equation}
\vert\bar{u}_{\mathrm{sg}}(y)-\bar{u}_{\tilde{\mathrm{sg}}}(y-r)\vert\geq\frac{1}{2}\vert
a_+-a_-\vert.
\label{elem-observ}
\end{equation}

Consider first the indices $j\in\tilde{I}_\lambda$ such that $\vert
h(\beta_j)-h(\alpha_j)\vert\leq 4\bar{y}$. We have
\[
  \sum_{j\in\tilde{I}_\lambda,\vert
    h(\beta_j)-h(\alpha_j)\vert\leq4\bar{y}}\vert
  h(\beta_j)-h(\alpha_j)\vert\leq4\bar{y}
  \frac{C_0}{e_{p^*}\lambda}=\frac{32C_0^2}{e_{p^*}\bar{q}^2}\bar{y}.
\]

Let $(\alpha,\beta)$ be one of the intervals $(\alpha_j,\beta_j)$
corresponding to $j\in\tilde{I}_\lambda$ with $\vert
h(\beta_j)-h(\alpha_j)\vert>4\bar{y}$.  If $r>4\bar{y}$ the interval
$(\bar{y},r-\bar{y})$ (if $r<-4\bar{y}$ the interval
$(r+\bar{y},-\bar{y})$) has measure larger then $\frac{\vert
  r\vert}{2}$. This and the assumptions on $(\alpha,\beta)$ imply that
there exist $y^0,y^1\in(\alpha,\beta)$, that satisfy $y^1-y^0=\vert
h(\beta_j)-h(\alpha_j)\vert/2$ and are such that
\begin{equation}
  \vert u(\beta,y)-u(\alpha,y)\vert\geq\frac{1}{4}\vert a_+-a_-\vert,
  \quad\text{ for }\;\;y\in(y^0,y^1).
  \label{bigger-gamma}
\end{equation}
This, provided $p>0$ is sufficiently small, follows from
(\ref{elem-observ}). Indeed we have

\[
  \begin{split}
    &\vert u(\beta,y)-u(\alpha,y)\vert
    \geq\vert\bar{u}_{\mathrm{sg}(\beta)}(y-h(\beta))-\bar{u}_{\mathrm{sg}(\alpha)}(y-h(\alpha))\vert\\
    &-\vert u(\beta,y)-\bar{u}_{\mathrm{sg}(\beta)}(y-h(\beta))\vert
    -\vert u(\alpha,y)-\bar{u}_{\mathrm{sg}(\alpha)}(y-h(\alpha))\vert\\
    &\geq\frac{1}{2}\vert a_+-a_-\vert
    -\vert v(\beta,y-h(\beta))\vert
    -\vert v(\alpha,y-h(\alpha))\vert\\
    &\geq\frac{1}{2}\vert a_+-a_-\vert-C_1(q(\alpha)^\frac{2}{3}+q(\beta)^\frac{2}{3})\\
    &\geq\frac{1}{2}\vert a_+-a_-\vert-2C_1p^\frac{2}{3}\geq\frac{1}{4}\vert a_+-a_-\vert
    ,\quad\text{ for }\;\;y\in(y^0,y^1),
  \end{split}
\]
where we denoted by $\bar{u}_{\mathrm{sg}(x)}$ the map
$\bar{u}\in\{\bar{u}_-,\bar{u}_+\}$ corresponding to
$x\in{S}_{e_{p^*}}$ and we used (\ref{linfty}) based on
\[
|v_y(x,y)|\leq C,
\]
that follows from \eqref{C2alpha} and \eqref{bar-exp}.
Integrating (\ref{bigger-gamma}) in $(y^0,y^1)$ yields
\[
\begin{split}
&\frac{\vert a_+-a_-\vert}{8}\vert h(\beta)-h(\alpha)\vert\leq\int_{y^0}^{y^1}\vert u(\beta,y)-u(\alpha,y)\vert{d} y
\leq\int_{y^0}^{y^1}\int_{\alpha}^{\beta}\vert u_x\vert{d} x{d} y\\
&\leq\frac{1}{\sqrt{2}}\vert h(\beta)-h(\alpha)\vert^\frac{1}{2}(\beta-\alpha)^\frac{1}{2}
\Big(\int_{y^0}^{y^1}\int_{\alpha}^{\beta}\vert u_x\vert^2{d} x{d} y\Big)^\frac{1}{2}\\
&\leq\vert h(\beta)-h(\alpha)\vert^\frac{1}{2}(\beta-\alpha)^\frac{1}{2}C_0^\frac{1}{2}.
\end{split}
\]
It follows $\vert h(\beta)-h(\alpha)\vert\leq\frac{64 C_0}{\vert
  a_+-a_-\vert^2}(\beta-\alpha)$ and in turn
\[
  \begin{split}
    &\sum_{j\in\tilde{I}_\lambda,\vert
      h(\beta_j)-h(\alpha_j)\vert>4\bar{y}}\vert
    h(\beta_j)-h(\alpha_j)\vert\\
    &\leq\frac{64 C_0}{\vert
      a_+-a_-\vert^2}\sum_{j\in\tilde{I}_\lambda,\vert
      h(\beta_j)-h(\alpha_j)\vert>4\bar{y}}(\beta_j-\alpha_j)\leq\frac{64
      C_0}{\vert a_+-a_-\vert^2}\vert\tilde{S}_{p^*}\vert.
  \end{split}
\]
The proof is complete.
\end{proof}
With Lemma \ref{h-bounded} at hand we can show that $d_L$ in
(\ref{TRAP}) can be taken independent of $L$ and that $u=u^L$
converges to $a_\pm$ as $y\rightarrow\pm\infty$ uniformly in
$x\in\R$.

Next we prove that the restriction $x,x'\in S_{p^*}$ in
Lemma~\ref{h-bounded} can be removed.  We have indeed
\begin{lemma}
  There is a constant $C_h>0$ independent of $L$ such that
  \[
  \vert h(x)-h(x')\vert\leq C_h,\;\;x,x'\in[0,L].
  \]
  \label{h-bounded-all}
\end{lemma}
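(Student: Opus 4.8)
The plan is to bridge the gap between $S_{p^*}$ and its complement $\tilde S_{p^*}=\bigcup_j(\alpha_j,\beta_j)$. For $x,x'\in[0,L]$ arbitrary, pick $\hat x,\hat x'\in S_{p^*}$ (which is nonempty for $L$ large, since $|S_{p^*}|\geq L-C_0/e_{p^*}$) and write
\[
|h(x)-h(x')|\leq |h(x)-h(\hat x)|+|h(\hat x)-h(\hat x')|+|h(\hat x')-h(x')|.
\]
The middle term is bounded by $C_h$ from Lemma~\ref{h-bounded}. So the whole matter reduces to bounding $|h(x)-h(\hat x)|$ when $x$ lies in one of the gaps $(\alpha_j,\beta_j)$ and $\hat x$ is an endpoint of that gap, say $\hat x=\alpha_j$; and here there is a subtlety, because inside a long gap $h$ need not even be well defined (the decomposition \eqref{decompositionL} is only unique when $q(x)\leq\bar q$, which can fail on $\tilde S_{p^*}$). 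The way around this is to observe that $h$ \emph{is} continuous and well-defined on the closure of each gap near its endpoints, and to interpolate using the trace estimate exactly as in the $j\in\tilde I_\lambda$ part of the proof of Lemma~\ref{h-bounded}.

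Concretely, I would proceed as follows. First, for short gaps $j\in I_\lambda$ (with $\lambda=\bar q^2/(8C_0)$ as before), the argument in Lemma~\ref{h-bounded} already shows $q(x)\leq\bar q$ throughout $(\alpha_j,\beta_j)$, so $h$ is smooth there and $\sum_{j\in I_\lambda}\sup_{x\in(\alpha_j,\beta_j)}|h(x)-h(\alpha_j)|$ is controlled by the same computation that gave $\sum_{j\in I_\lambda}|h(\beta_j)-h(\alpha_j)|$, namely by $(2C_0)^{1/2}\tfrac{2}{\|\bar u'\|}|\tilde S_{p^*}|^{1/2}$ — but $|\tilde S_{p^*}|\leq C_0/e_{p^*}$ is independent of $L$. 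Second, for long gaps $j\in\tilde I_\lambda$ (there are at most $C_0/(e_{p^*}\lambda)$ of them, again $L$-independent), I would show that for any $x$ in such a gap and any $\bar u\in\{\bar u_-,\bar u_+\}$ and any $r\in\R$, the ``candidate shift'' cannot drift far from $h(\alpha_j)$: if $|r-h(\alpha_j)|>4\bar y$ then, by the elementary observation \eqref{elem-observ}, $\bar u_{\mathrm{sg}}(\cdot-r)$ differs from $\bar u_{\mathrm{sg}(\alpha_j)}(\cdot-h(\alpha_j))$ by at least $\tfrac12|a_+-a_-|$ on an interval of length $\gtrsim|r-h(\alpha_j)|$, hence
\[
q^{u(x,\cdot)}\;\geq\;\tfrac14|a_+-a_-|\,\bigl(|r-h(\alpha_j)|/2\bigr)^{1/2}-\|u(x,\cdot)-u(\alpha_j,\cdot)\|-q(\alpha_j),
\]
which forces $|r-h(\alpha_j)|$ to be bounded in terms of $q^{u(x,\cdot)}+\|u(x,\cdot)-u(\alpha_j,\cdot)\|+q(\alpha_j)$; taking the infimizing $r$ and using the Cauchy–Schwarz trace bound $\|u(x,\cdot)-u(\alpha_j,\cdot)\|^2\leq (\beta_j-\alpha_j)\int_0^L\!\|u_x\|^2\,dx\cdot\mathbf 1\leq 2C_0(\beta_j-\alpha_j)$ together with $q(\alpha_j)\leq\bar q$ gives $|h(x)-h(\alpha_j)|\lesssim 1+\bar y+C_0(\beta_j-\alpha_j)$ for a suitable (possibly non-unique, but any) choice of $h(x)$. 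Summing over the finitely many long gaps and using $\sum(\beta_j-\alpha_j)\leq|\tilde S_{p^*}|\leq C_0/e_{p^*}$ bounds this contribution by an $L$-independent constant.

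Putting the three pieces together — the Lemma~\ref{h-bounded} estimate on $S_{p^*}$, the short-gap sum, and the long-gap sum — yields a single constant $C_h$, independent of $L$, with $|h(x)-h(x')|\leq C_h$ for all $x,x'\in[0,L]$, and by $L$-periodicity for all $x,x'\in\R$. \textbf{The main obstacle} is conceptual rather than computational: one must be careful that $h$ need not be uniquely defined (nor even Lipschitz) inside the long gaps, so the right statement is really about \emph{every} near-optimal translation $r$ at points $x$ of the gap staying within a bounded neighborhood of the well-defined value $h(\alpha_j)$ at the endpoint; the quantitative coercivity $\|u-\bar u_\pm(\cdot-r)\|\gtrsim\sqrt{|r|}$ (already recorded in the excerpt, with constant $\tfrac12|a_+-a_-|$) is exactly what makes this work, and it is the linchpin of the argument. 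The remaining estimates are routine Cauchy–Schwarz/trace bounds of the type already used repeatedly above.
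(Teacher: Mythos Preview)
Your approach is correct but takes a genuinely different route from the paper's. The paper does \emph{not} directly estimate $|h(x)-h(\alpha_j)|$ on the gaps of $\tilde S_{p^*}$. Instead it first upgrades Lemma~\ref{h-bounded} to a \emph{uniform pointwise} statement: using \eqref{LinftyB} and the bound on $h$ restricted to $S_{p^*}$, it finds $h_+>h_-$ (with $h_+-h_-$ independent of $L$) such that $|u(x,y)-a_+|\leq r_0/4$ for $y\geq h_+$, $x\in S_{p^*}$; then a slicing argument (the set $Y$ of bad $y$-levels has $L$-independent measure) combined with the cut-off lemma from \cite{af3} extends this to \emph{all} $x\in[0,L]$, namely $|u(x,y)-a_+|\leq r_0/2$ for $y\geq h_++2|Y|$. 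The bound on $h(x)$ is then read off from this uniform confinement of the transition layer.

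Your direct argument---triangle inequality to the nearest gap endpoint, then the $L^2$ coercivity $\|\bar u_{\mathrm{sg}}(\cdot-h_1)-\bar u_{\tilde{\mathrm{sg}}}(\cdot-h_2)\|\gtrsim|h_1-h_2|^{1/2}$ from \eqref{elem-observ}, combined with the trace bound $\|u(x,\cdot)-u(\alpha_j,\cdot)\|^2\leq 2C_0(\beta_j-\alpha_j)$---is more elementary and avoids the cut-off lemma entirely. (A small remark: you do not need to \emph{sum} over the long gaps; for given $x,x'$ only the two gaps containing them enter the triangle inequality, and for each the single-gap bound $|h(x)-h(\alpha_j)|\lesssim \bar y+C_0(\beta_j-\alpha_j)\leq \bar y+C_0|\tilde S_{p^*}|$ already suffices.) The trade-off is that the paper's route simultaneously delivers the content of Corollary~\ref{exp-ind-L}---the $L$-independent exponential decay of $u^L$ to $a_\pm$---which is indispensable for everything that follows (in particular \eqref{vDExp-ind-L}, \eqref{vy}, and Lemma~\ref{expD}). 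With your approach that corollary would still have to be proved by essentially the paper's cut-off argument, so the net saving is modest.
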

\begin{proof}
Assuming that $p>0$ is sufficiently small, from (\ref{linfty}) we have
\begin{equation}
  \vert u(x,y)-\bar{u}_{\mathrm{sg}(x)}(y-h(x))\vert\leq C^1
  p^\frac{2}{3}\leq\frac{r_0}{8},\;\;x\in S_{p^*}, y\in\R,
\label{LinftyB}
\end{equation}
where $r_0$ is defined in \eqref{second-derW}.  By
Lemma~\eqref{h-bounded} there exist $h_+, h_-$ such that
$2\delta_h:=h_+-h_-$ is bounded independently of $L$ and
\[
\begin{split}
&\vert\bar{u}_{\mathrm{sg}(x)}(y-h(x))-a_+\vert\leq\frac{r_0}{8},\;\;y\geq
  h_+,\;x\in S_{p^*},\\
&\vert\bar{u}_{\mathrm{sg}(x)}(y-h(x))-a_-\vert\leq\frac{r_0}{8},\;\;y\leq
h_-,\;x\in S_{p^*}.
\end{split}
\]
The first relation and (\ref{LinftyB}) imply
\begin{equation}
  \vert u(x,y)-a_+\vert\leq\frac{r_0}{4},\;\;y\geq h_+,\;x\in S_{p^*}.
  \label{near-a-Sp}
\end{equation}
Now define $Y\subset\R$ by setting
\[
Y=\{y> h_+:\exists\, x_y\in[0,L]\; \text{such that}\; \vert
u(x_y,y)-a_+\vert\geq\frac{r_0}{2}\},
\]
From (\ref{near-a-Sp}) it follows that $y\in Y$ implies that $x_y$
belongs to $\tilde{S}_{p^*}$ and therefore to one of the intervals,
say $(\alpha,\beta)$, that compose $\tilde{S}_{p^*}$. From
(\ref{near-a-Sp}) with $x=\alpha$ it follows $\vert
u(x_y,y)-u(\alpha,y)\vert\geq\frac{r_0}{4}$ for $y\in Y$, and
therefore we have
\[
\frac{r_0}{4}\leq\int_\alpha^{x_y}\vert u_x(x,y)\vert {d} x
\leq\vert\beta-\alpha\vert^\frac{1}{2}\Big(\int_\alpha^\beta\vert
u_x(x,y)\vert^2 {d} x\Big)^\frac{1}{2},\qquad y\in Y,
\]
and in turn
\[
\vert
Y\vert\frac{r_0^2}{16}\leq\vert\tilde{S}_{p^*}\vert\int_{\tilde{S}_{p^*}}\int_\alpha^\beta\vert
u_x(x,y)\vert^2{d} x{d} y\leq 2C_0\vert\tilde{S}_{p^*}\vert,
\]
and we see that the measure of $Y$ is bounded independently of
$L$. Then there exists an increasing sequence $y_j\rightarrow+\infty$
such that
\[
\begin{split}
  &y_1\leq h_++2\vert Y\vert,\\
  &\vert u(x,y_j)-a_+\vert<\frac{r_0}{2},\;\;x\in[0,L],\;\;j=1,\ldots.
\end{split}
\]
This and the cut-off lemma in \cite{af3} imply
\[
\vert u(x,y)-a_+\vert\leq\frac{r_0}{2},\;\;y\geq h_++2\vert Y\vert,\;x\in[0,L].
\]
A similar argument yields
\[
\vert u(x,y)-a_-\vert\leq\frac{r_0}{2},\;\;y\leq h_--2\vert Y\vert,\;x\in[0,L].
\]
The lemma follows from these relations and the fact that $h_+-h_-$ and
$|Y|$ do not depend on $L$.

\end{proof}

\begin{corollary}
We can assume that the minimizer $u^L$ satisfies
  \begin{equation}
    \begin{split}
      &\vert u^L(x,y)-a_+\vert\leq Ke^{-k y},\;\;y>0,\;x\in\R,\\
      &\vert u^L(x,y)-a_-\vert\leq Ke^{k y},\;\;y<0,\;x\in\R.
    \end{split}
    \label{Exp-ind-L}
  \end{equation}
  and
  \begin{equation}\label{DExp-ind-L}
    \vert D^\alpha u^L(x,y)\vert\leq Ke^{-k\vert y\vert},\;\;y\in\R,\;x\in\R,
  \end{equation}
  for $\alpha=(\alpha_1,\alpha_2)$, $\vert\alpha\vert=1,2$, with
  constants $k, K>0$ independent of $L$.
  \label{exp-ind-L}
\end{corollary}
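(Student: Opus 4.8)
The plan is to derive Corollary~\ref{exp-ind-L} from Lemma~\ref{h-bounded-all} together with the same comparison argument that already yielded~\eqref{LExp}; the only genuinely new point is that every constant involved can now be chosen independently of $L$.

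First I would record that the proof of Lemma~\ref{h-bounded-all} delivers more than its displayed inequality: it produces numbers $h_-<h_+$ and a set $Y\subset\R$, with $h_+-h_-$ and $|Y|$ bounded independently of $L$, such that $|u^L(x,y)-a_+|\le\tfrac{r_0}{2}$ for all $x\in[0,L]$ and $y\ge h_++2|Y|$, and symmetrically $|u^L(x,y)-a_-|\le\tfrac{r_0}{2}$ for $y\le h_--2|Y|$. Moreover $h_\pm$ are themselves bounded independently of $L$: the $y$-translation performed in the proof of Lemma~\ref{trap} normalizes $u^L(\bar x,\cdot)$ to be $H^1$-close to $\bar u$ rather than to a nontrivial translate, so $h^L(\bar x)$ is close to $0$, and then Lemma~\ref{h-bounded} confines $h^L$ to an interval of length $\le C_h$ around that value; together with~\eqref{In-CONV} this bounds $|h_\pm|$ by a constant depending only on $C_h$ and the fixed decay data of $\bar u_\pm$. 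Using the $L$-periodicity of $u^L$ in $x$, we conclude that~\eqref{TRAP} holds with $d_L$ replaced by the $L$-independent constant $d_0:=\max\{|h_+|,|h_-|\}+2|Y|$. Since translating in $y$ preserves $\mathcal{J}$, minimality, and membership in $\mathcal{A}^L$ (the periodicity, brake and equivariance conditions involve $x$ only, and the limits at $y=\pm\infty$ are translation-invariant), there is no loss of generality in assuming this normalization of $u^L$, which is the meaning of ``we can assume'' in the statement.

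Next I would repeat the comparison argument used for~\eqref{LExp}, now with $d_0$ in place of $d_L$. On the half-strip $\{y\ge d_0\}$ one has $|u^L-a_+|\le r_0/2$, so integrating the lower bound $W_{uu}\ge\gamma^2\,\id$ of~\eqref{second-derW} along the segment from $a_+$, and using $W_u(a_+)=0$, gives $W_u(u^L)\cdot(u^L-a_+)\ge\gamma^2|u^L-a_+|^2$ there; hence $\phi:=|u^L-a_+|^2$ satisfies $\Delta\phi\ge 2\,\Delta u^L\cdot(u^L-a_+)=2\,W_u(u^L)\cdot(u^L-a_+)\ge 2\gamma^2\phi$. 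Comparing $\phi$ on the bounded periodic cells $(0,L)\times(d_0,R)$, and letting $R\to\infty$, with the supersolution $(r_0/2)^2\bigl(e^{-2k(y-d_0)}+e^{-2k(R-y)}\bigr)$ of $\Delta-2\gamma^2$ for a suitable $k\in(0,\gamma/\sqrt{2}\,]$, the maximum principle gives $|u^L(x,y)-a_+|\le\tfrac{r_0}{2}\,e^{-k(y-d_0)}$ for $y\ge d_0$, $x\in\R$. Absorbing $e^{kd_0}$ into the constant, and using $|u^L-a_+|\le C$ for $0\le y\le d_0$ from~\eqref{u-bound1}, we obtain the first line of~\eqref{Exp-ind-L} with $k,K$ independent of $L$; the second line is the mirror statement about $a_-$ and $y=0$.

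For the derivative bounds~\eqref{DExp-ind-L} I would differentiate $\Delta u^L=W_u(u^L)$ and use interior elliptic ($C^{2,\beta}$-Schauder) estimates on unit balls: for $|y|\ge d_0+1$ we have $\|u^L-a_\pm\|_{L^\infty(B_1(x,y))}\le Ke^{-k(|y|-1)}$ by~\eqref{Exp-ind-L} and $|W_u(u^L)|\le C|u^L-a_\pm|$ near $a_\pm$, so the Schauder estimate on $B_1(x,y)$, with~\eqref{C2alpha} furnishing the $L$-uniform control of the higher norms, yields $|D^\alpha u^L(x,y)|\le Ce^{-k(|y|-1)}$ for $|\alpha|=1,2$; for $|y|\le d_0+1$ the bound is immediate from~\eqref{C2alpha} because $d_0$ does not depend on $L$. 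Relabelling $k$ and $K$ finishes the proof. All the substantial work is behind us, in Lemma~\ref{h-bounded-all}, i.e.\ in the uniform-in-$L$ control of the oscillation of the shift $h^L$; for the corollary itself the only points needing care are reading off from that proof that $h_\pm$ and $|Y|$, hence $d_0$, are $L$-independent, which relies on the normalization fixed in the proof of Lemma~\ref{trap}, and checking that the $y$-translation does not disturb the variational framework, so I expect no real obstacle here.
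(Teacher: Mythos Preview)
Your proposal is correct and follows essentially the same route as the paper: use the $L$-independent bounds produced inside the proof of Lemma~\ref{h-bounded-all}, exploit $y$-translation invariance of $\mathcal{J}$ on $\mathcal{A}^L$ to normalize, then apply the comparison argument (for~\eqref{Exp-ind-L}) and elliptic regularity (for~\eqref{DExp-ind-L}). The only cosmetic difference is that the paper simply translates by $\delta_h=(h_+-h_-)/2$ so that only the $L$-independent quantities $h_+-h_-$ and $|Y|$ enter, whereas you argue that $h_\pm$ are individually bounded via the normalization fixed in Lemma~\ref{trap}; both work, but the paper's shortcut avoids that extra step.
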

\begin{proof}
Using again the translation invariance of the energy $\mathcal{J}$, by
identifying $u(x,y)$ with $u_{\delta_h}(x,y)=u(x,y+\delta_h)$, we can
assume that the minimizer $u$ satisfy
\[
\begin{split}
  &\vert u(x,y)-a_+\vert\leq\frac{r_0}{2},\;\;y\geq \delta_h + 2\vert Y\vert,\;x\in[0,L]\\
&\vert u(x,y)-a_-\vert\leq\frac{r_0}{2},\;\;y\leq -\delta_h - 2\vert Y\vert,\;x\in[0,L].
\end{split}
\]
These inequalities and a standard argument, based on the
non-degeneracy of $a_+,a_-$, imply (\ref{Exp-ind-L}). Inequality
\eqref{DExp-ind-L} follows from (\ref{Exp-ind-L}) and elliptic
regularity.  The proof is complete.

\end{proof}

\begin{remark}
From (\ref{Exp-ind-L}) it follows that we have $\vert h(x)\vert\leq
C_h$, for $x\in[0,L]$ with $C_h$ independent of $L$. Note that this is
true in spite of the fact that $h(x)$, if $q(x)$ is large, may be
discontinuous.

The bound on $h(x)$ together with (\ref{Exp-ind-L}),
(\ref{DExp-ind-L}) and (\ref{bar-exp}) imply that
\begin{equation}
v(x,y)=u(x,y+h(x))-\bar{u}_{\mathrm{sg}(x)}(y),
\label{vv}
\end{equation}
and its first and second derivative with respect to $y$ satisfy
exponential estimates of the form
\begin{equation}
  \vert D_y^i v(x,y)\vert\leq Ke^{-k\vert y\vert},\;\;y\in\R,\;x\in\R,\;i=0,1,2.
\label{vDExp-ind-L}
\end{equation}
with constants $k,K>0$ independent of $L$. From this and the identity $\|v_y\|^2+\langle v,v_{yy}\rangle=0$ it follows
\begin{equation}
  \|v_y(x,\cdot)\|\leq C_2 q(x)^\frac{1}{2},
  \label{vy}
\end{equation}
with $C_2>0$ independent of $L$.  This inequality implies that in each
interval where $q(x)\leq q^*$, for some $q^*>0$, we can use the
expressions of $h^\prime(x)$ and $\|u_x(x,\cdot)\|$ in Lemma
\ref{represent} and we have the monotonicity of the function $p\mapsto
f(p,x)$.
\label{q-suff}
\end{remark}

\subsection{Conclusion of the proof of Theorem \ref{periodic1}}
As before we set $u=u^L$. Since $u\in\mathcal{A}^L$ we have in
particular $u(0,y)=\gamma u(0,y)$ that means $u(0,y)\in\pi_\gamma$,
$\pi_\gamma$ the plane fixed by $\gamma$. From this and
$\bar{u}_-=\gamma\bar{u}_+$, $\bar{u}_-\neq\bar{u}_+$ it follows
\[
q_1(0)=\inf_{r\in\R,\pm}\|u(0,\cdot)-\bar{u}_\pm(\cdot-r)\|_1\geq\frac{1}{2}\|\bar{u}_+-\bar{u}_-\|.
\]
We assume that the constant $q^*$ introduced above satisfies
$q^*<\frac{1}{2}\|\bar{u}_+-\bar{u}_-\|$ and set $p=q^*/2$. Then,
provided $L$ is sufficiently large, there exists $x_p>0$ such that
\begin{equation}
  \begin{split}
    &q_1(x)>p,\;\;x\in[0,x_p),\\
      &q_1(x_p)=p.
  \end{split}
  \label{xp}
\end{equation}
Indeed, from Lemma~\ref{away} and \eqref{energy-bound1} it follows
$x_pe_p\leq\int_0^{x_p}(J_\R(u(x,\cdot)-c_0))dx\leq C_0$, so that
\begin{equation}
  x_p\leq l_p:=\frac{C_0}{e_p}.
  \label{xp-b}
\end{equation}
From (\ref{xp}), (\ref{xp-b}) and the symmetry $u(\frac{L}{4}-x,y)=u(\frac{L}{4}+x,y)$ with $x=\frac{L}{4}-x_p$ we obtain
\begin{equation}
  \begin{split}
    &q(x_p)=q(\frac{L}{2}-x_p)\leq q_1(x_p)=p=q^*/2,\\
    &\bar{u}_{\mathrm{sg}(x_p)}=\bar{u}_{\mathrm{sg}(\frac{L}{2}-x_p)},\\
    &h(x_p)=h(\frac{L}{2}-x_p).
  \end{split}
  \label{q12-small}
\end{equation}
We now show, see Lemma \ref{stay-in} below, that the minimality of
$u=u^L$ and (\ref{q12-small}) imply
\[
  q(x)\leq p\;\;,x\in[x_p,\frac{L}{2}-x_p].
\]
In the proof of this fact, for $x$ in certain intervals, we use test
maps of the form
\begin{equation}
  \hat{u}(x,y)=\bar{u}(y-\hat{h}(x))+\hat{q}(x)\nu(x,y-\hat{h}(x))
  \label{test}
\end{equation}
for suitable choices of the functions $\hat{q}=\hat{q}(x)$ and
$\hat{h}=\hat{h}(x)$. We always take $\hat{q}(x)\leq q(x)\leq p$. Note
that in (\ref{test}) the direction vector $\nu(x,\cdot)$ is the one
associated to $v(x,\cdot)=q(x)\nu(x,\cdot)$ with $v(x,\cdot)$ defined
in the decomposition (\ref{decompositionL}) of $u$.

\noindent
From (\ref{test}) it follows
\begin{equation}
  \int_\R\vert\hat{u}_x\vert^2dy=(\hat{h}^\prime)^2\|\bar{u}^\prime+\hat{q}\nu_y\|^2-2\hat{h}^\prime \hat{q}^2\langle\nu_x,\nu_y\rangle+(\hat{q}^\prime)^2+\hat{q}^2\|\nu_x\|^2.
  \label{hatux-square}
\end{equation}
We choose the value of $\hat{h}^\prime$ that minimizes (\ref{hatux-square}) that is
\[
  \hat{h}^\prime=\hat{q}^2\frac{\langle\nu_x,\nu_y\rangle}{\|\bar{u}^\prime+\hat{q}\nu_y\|^2},
\]
then we get
\[
\int_\R\vert\hat{u}_x\vert^2dy=(\hat{q}^\prime)^2+\hat{q}^2\|\nu_x\|^2
-\hat{q}^4\frac{\langle\nu_x,\nu_y\rangle^2}{\|\bar{u}^\prime+\hat{q}\nu_y\|^2}.
\]
Therefore the energy density of the test map $\hat{u}$ is given by
\begin{equation}
\begin{split}
  &\int_\R\frac{1}{2}\vert\hat{u}_x\vert^2dy+\int_\R(W(\hat{u})+\frac{1}{2}\vert\hat{u}_y\vert^2)dy\\
  &=\frac{1}{2}\Big((\hat{q}^\prime)^2+\hat{q}^2\|\nu_x\|^2-\hat{q}^4\frac{\langle\nu_x,\nu_y\rangle^2}{\|\bar{u}^\prime+\hat{q}\nu_y\|^2}\Big)+\mathcal{W}(\hat{q}\nu)
  + c_0.
\end{split}
\label{hatenergy}
\end{equation}
Note that, since we do not change the direction vector $\nu(x,\cdot)$,
this expression is completely determined once we fix the function
$\hat{q}$.
\begin{lemma}
  If $u=u^L$ satisfies (\ref{q12-small}),
  then
  \[
  q(x)\leq p,\;\;x\in\Bigl[x_p,\frac{L}{2}-x_p\Bigr].
  \]
  \label{stay-in}
\end{lemma}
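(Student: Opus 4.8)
The plan is to argue by contradiction: assume $q(x)>p$ for some $x\in(x_p,\frac{L}{2}-x_p)$, and build a competitor $\hat u\in\mathcal{A}^L$ with strictly smaller energy than $u=u^L$, contradicting its minimality.

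The main idea is to \emph{cap the amplitude $q$ at the level $p$}. On any sub-interval $I$ on which $0<q(x)\le q^*$, the decomposition (\ref{decompositionL}) is unique and smooth (Remark~\ref{q-suff}), and Lemma~\ref{properties}, Lemma~\ref{represent} and the definition of $\mathcal{W}$ give the representation
\[
\mathcal{J}_{I\times\R}(u)=\int_I\Big(\tfrac12 q'(x)^2+\tfrac12 f(q(x),x)\|\nu_x(x,\cdot)\|^2+\mathcal{W}(q(x)\nu(x,\cdot))\Big)\,dx+c_0\,|I| .
\]
The test map (\ref{test}) built on the \emph{same} direction field $\nu(x,\cdot)$, with amplitude $\hat q(x)=\min\{q(x),p\}$ and the energy‑minimising choice of $\hat h'$, has by (\ref{hatenergy}) the same expression with $q$ replaced by $\hat q$. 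On $\{q\le p\}$ the two integrands agree; on $\{q>p\}$ we have $\hat q\equiv p$, so $\hat q'=0$ a.e., $f(p,x)\le f(q(x),x)$ by the monotonicity of $p\mapsto f(p,x)$ (Lemma~\ref{represent}), and $\mathcal{W}(p\,\nu(x,\cdot))\le\mathcal{W}(q(x)\nu(x,\cdot))$ by the monotonicity of the effective potential (Lemma~\ref{lemmaw-true}, using $p<q(x)\le q^*\le\bar q$). Hence $\hat u$ has strictly smaller energy on $I$ whenever $\{q>p\}\cap I$ has positive measure. If a connected component $(a,b)$ of $\{q>p\}$, on which $q(a)=q(b)=p$, has $q\le q^*$ throughout, then applying this with $I=[a,b]$ — and, to stay in $\mathcal{A}^L$, replicating the modification on the brake‑reflected interval $(\frac{L}{2}-b,\frac{L}{2}-a)$ and on the $\gamma$‑images, or, when $(a,b)$ straddles $\frac{L}{4}$, performing the (automatically symmetric) capping on $[a,\frac{L}{2}-a]$ using $u(\frac{L}{2}-x,\cdot)=u(x,\cdot)$ — already yields the contradiction.

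The hard part is to exclude components of $\{q>p\}$ on which $q$ exceeds $q^*$: there the decomposition (\ref{decompositionL}) need not be unique or continuous in $x$, so (\ref{hatenergy}) is unavailable. I would treat such a region separately, using that if $q(\bar x)\ge q^*$ then Lemma~\ref{away} (with $q_1(\bar x)\ge q(\bar x)$) gives the \emph{fixed}, $L$‑independent lower bound $J_\R(u(\bar x,\cdot))-c_0\ge e_{q^*}>0$, while by (\ref{energy-bound1}) the set $\{q\ge q^*\}$ has measure $\le C_0/e_{q^*}$. On the symmetrised interval with endpoints where $q=p$ — whose two boundary traces coincide, by $u(\frac{L}{2}-x,\cdot)=u(x,\cdot)$ — one then compares $u$ with a constant‑in‑$x$ (or only mildly relaxed) competitor whose energy stays controlled thanks to the uniform exponential estimates (\ref{vv})--(\ref{vDExp-ind-L}), the bound (\ref{vy}) and $|h|\le C_h$; the definite potential gain $e_{q^*}$ at the points where $q\ge q^*$, together with the smallness of $p=q^*/2$ and of $\|v(x,\cdot)\|_1$ where $q(x)=p$, is what makes the comparison strict. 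Gluing this with the capping construction completes the proof.

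In short, away from the threshold set $\{q\ge q^*\}$ the statement is a term‑by‑term comparison resting entirely on the monotonicities of $f$ and $\mathcal{W}$ already established in Lemmas~\ref{represent} and~\ref{lemmaw-true}, while on $\{q\ge q^*\}$ one trades the loss of the decomposition for the definite energy lower bound furnished by Lemma~\ref{away}; I expect that last region to be the only genuine obstacle.
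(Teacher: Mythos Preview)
Your overall strategy and the term-by-term energy comparison via the monotonicities of $f$ and $\mathcal W$ are exactly those of the paper. The difference is that you \emph{cap} the amplitude, $\hat q=\min\{q,p\}$, whereas the paper \emph{reflects} it, $\hat q=2p-q$ on $\{p<q\le 2p\}$. Both satisfy $(\hat q')^2\le(q')^2$ and $\hat q\le q$, so when $\max q\le 2p=q^*$ either device works---but only if the modification is carried out on the full symmetric interval $[x_p,\tfrac L2-x_p]$, so that the oddness of $\hat h'$ about $L/4$ (coming from $v_x(\tfrac L2-x,\cdot)=-v_x(x,\cdot)$) forces $\hat h(\tfrac L2-x_p)=h(x_p)=h(\tfrac L2-x_p)$. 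Your component-wise gluing skips this point: with $\hat q\equiv p$ on a component $(a,b)$ the optimal $\hat h'(x)=p^2\langle\nu_x,\nu_y\rangle/\|\bar u'+p\nu_y\|^2$ differs from $h'(x)$, so in general $\hat h(b)\ne h(b)$ and the glued competitor is discontinuous at $b$.

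The region where $\max q>q^*$ is where capping genuinely falls short and the paper's reflection is the missing idea. Reflection sends $q=2p$ to $\hat q=0$, so at the first point $\tilde x_p$ where $q$ reaches $2p$ the competitor equals $\bar u_+(\cdot-\hat h(\tilde x_p))$ \emph{exactly} and can be continued as a constant in $x$ on $(\tilde x_p,\tfrac L2-\tilde x_p)$ with energy density $c_0$; since $J_\R(u(x,\cdot))>c_0$ there, the comparison is immediate and needs no quantitative balance. Your capping leaves $\hat q=p>0$ at the boundary of the bad set, so any constant-in-$x$ extension must carry either the trace $u(a,\cdot)$---whose excess $J_\R(u(a,\cdot))-c_0$ is $O(\|v(a,\cdot)\|_1^2)=O(p)$ by \eqref{vy} and \eqref{J-upper0}, while the available lower bound for $u$ on $\{p<q\le\bar q\}$ is only $\tfrac{\mu}{2}p^2$, the wrong way for small $p$---or the pure connection $\bar u_+$, which would introduce a jump. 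Your flag that this region is ``the only genuine obstacle'' is accurate; the reflection $\hat q=2p-q$ is precisely the trick that removes it.
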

\begin{proof}
Assume instead that $q(x^*)>p$ for some
$x^*\in(x_p,\frac{L}{2}-x_p)$. We can assume that
$q(x^*)=\max_{x\in[x_p,\frac{L}{2}-x_p]}q(x)$.  We show that this
implies the existence of a competing map $\tilde{u}$ with less energy
than $u$. Consider first the case where $q(x^*)\in(p,2p]$. In this
    case we set $\tilde{u}=\hat{u}$ with $\hat{u}$ defined in
    (\ref{test}) and, see Figure \ref{fig2},

\begin{equation}
  \begin{split}
    &\hat{q}(x)=q(x),\;\;\text{ if }\;q(x)\leq p,\\
    &\hat{q}(x)=2p-q(x),\;\;\text{ if }\;q(x)\in(p,2p].
  \end{split}
  \label{compar1}
\end{equation}

\begin{figure}
  \begin{center}
  \begin{tikzpicture}[xscale=1,yscale=1.75]
\draw [->](3,0)--(12,0);
\draw (3,1)--(12,1);
\draw (3,2)--(12,2);
\draw [blue, ultra thick](3,1).. controls (3.25,.5) and(3.75,.5)..(4,1);
\draw [blue, ultra thick](4,1).. controls (4.25,1.5) and(4.75,1.5)..(5,1);
\draw [red, ultra thick](4,1).. controls (4.25,.5) and(4.75,.5)..(5,1);
\draw [blue, ultra thick](5,1).. controls (5.25,.5) and(5.75,.5)..(6,1);
\draw [blue, ultra thick](6,1).. controls (6.25,1.5) and(6.75,1.75)..(7,1.5);
\draw [blue, ultra thick](7,1.5).. controls (7.25,1.25) and(7.75,1.25)..(8,1.5);
\draw [red, ultra thick](6,1).. controls (6.25,.5) and(6.75,.25)..(7,.5);
\draw [red, ultra thick](7,.5).. controls (7.25,.75) and(7.75,.75)..(8,.5);
\draw [blue, ultra thick](11,1).. controls (11.25,.5) and(11.75,.5)..(12,1);
\draw [blue, ultra thick](10,1).. controls (10.25,1.5) and(10.75,1.5)..(11,1);
\draw [red, ultra thick](10,1).. controls (10.25,.5) and(10.75,.5)..(11,1);
\draw [blue, ultra thick](9,1).. controls (9.25,.5) and(9.75,.5)..(10,1);
\draw [blue, ultra thick](8,1.5).. controls (8.25,1.75) and(8.75,1.5)..(9,1);
\draw [red, ultra thick](8,.5).. controls (8.25,.25) and(8.75,.5)..(9,1);
\draw [dotted](3,0)--(3,1);
\draw [dotted](12,0)--(12,1);
\draw [dotted](6.75,0)--(6.75,1.63);
\draw [dotted](7.5,0)--(7.5,2);
\node at (7.5,-.2){$L/4$};
\node at (3,-.2){$x_p$};
\node at (2.75,1){$p$};
\node at (2.75,2){$2p$};
\node at (12,-.2){$L/2-x_p$};
\node at (6.75,-.2){$x^*$};
\draw [->](4.5,.4)--(4.5,.6);
\node at (4.5,.2){$\hat{q}(x)$};
\draw [->](4.5,1.6)--(4.5,1.4);
\node at (4.5,1.7){$q(x)$};
 \end{tikzpicture}
\end{center}
  \caption{The maps $x\rightarrow q(x)$ and $x\rightarrow\hat{q}(x)$ in Lemma \ref{stay-in}, $q(x^*)\leq 2p$}
\label{fig2}
\end{figure}
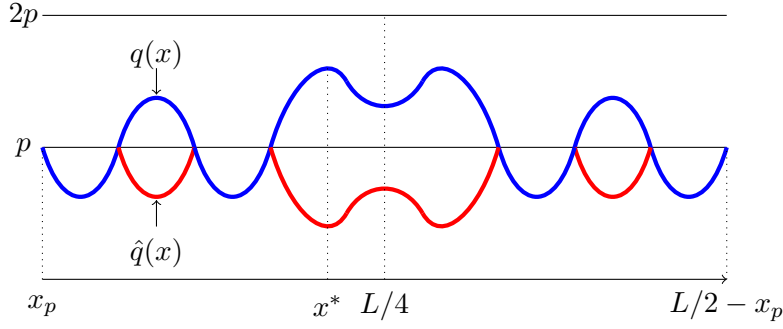
With this definition we have
\begin{equation}
  \tilde{u}(x_p)=u(x_p)=u\Bigl(\frac{L}{2}-x_p\Bigr)=\tilde{u}\Bigl(\frac{L}{2}-x_p\Bigr).
  \label{xp=Lxp}
\end{equation}
To see this we note that
$\max_{x\in[x_p,\frac{L}{2}-x_p]}q(x)=q(x^*)\leq q^*$ implies that
$\mathrm{sg}(x)$ is constant in $[x_p,\frac{L}{2}-x_p]$ therefore from
(\ref{vv}) and $u(x,y)=u(\frac{L}{2}-x,y)$ it follows
\[
v_x(x,y)=-v_x\Bigl(\frac{L}{2}-x,y\Bigr)\;\;\;v_y(x,y)=v_y\Bigl(\frac{L}{2}-x,y\Bigr),
\]
and by consequence
\[
\hat{h}^\prime(x)=-\hat{h}^\prime\Bigl(\frac{L}{2}-x\Bigr),
\]
which yields
\[
\hat{h}\Bigl(\frac{L}{2}-x_p\Bigr) =
h(x_p)+\int_{x_p}^{\frac{L}{2}-x_p}\hat{h}^\prime(x)dx = h(x_p) =
h\Bigl(\frac{L}{2}-x_p\Bigr).
\]
This and $q(x_p)=q(\frac{L}{2}-x_p)$ imply (\ref{xp=Lxp}). It remains
to show that the energy of $\tilde{u}$ is strictly less than the
energy of $u$. By comparing (\ref{hatenergy}) with the analogous
expression of the energy of $u$ and observing that
$(\hat{q}^\prime)^2=(q^\prime)^2$ and $\hat{q}(x)\leq q(x)$ with
strict inequality near $x^*$ we see that this is indeed the case.

Assume now that $q(x^*)>2p$, see Figure \ref{fig3}. Let
$\tilde{x}_p\in(x_p,\frac{L}{4})$ be the number
\[
\tilde{x}_p=\max\{x>x_p: q(s)\leq 2p,\;s\in(x_p,x]\}.
\]
Note that from
$\bar{u}_{\mathrm{sg}(x_p)}=\bar{u}_{\mathrm{sg}(\frac{L}{2}-x_p)}$
and the symmetry of $u$ it follows that $\mathrm{sg}(x)$ is equal to a
constant, say $+$, in
$[x_p,\tilde{x}_p]\cup[\frac{L}{2}-\tilde{x}_p,\frac{L}{2}-x_p]$.

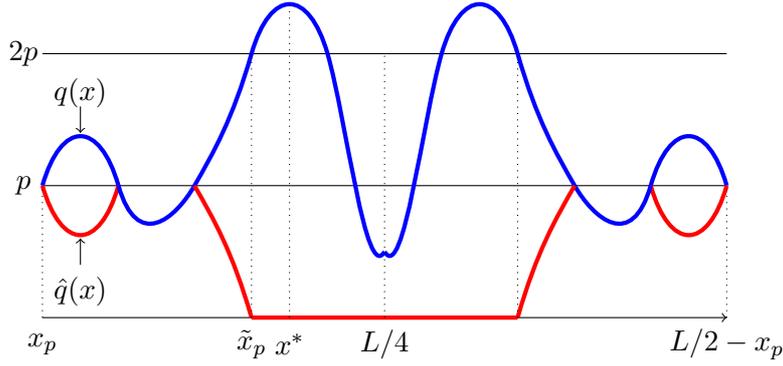
\begin{figure}
  \begin{center}
  \begin{tikzpicture}[xscale=1,yscale=1.75]
\draw [->](3,0)--(12,0);
\draw (3,1)--(12,1);
\draw (3,2)--(12,2);
\draw [blue, ultra thick](3,1).. controls (3.25,1.5) and(3.75,1.5)..(4,1);
\draw [red, ultra thick](3,1).. controls (3.25,.5) and(3.75,.5)..(4,1);
\draw [blue, ultra thick](4,1).. controls (4.25,.5) and(4.75,.75)..(5,1);
\draw [blue, ultra thick](5,1).. controls (5.25,1.25) and(5.50,1.5)..(5.75,2);
\draw [red, ultra thick](5,1).. controls (5.25,.75) and(5.50,.5)..(5.75,0);
\draw [blue, ultra thick](5.75,2).. controls (6,2.5) and(6.50,2.5)..(6.75,2);
\draw [blue, ultra thick](6.75,2).. controls (7,1.5) and(7.25,.25)..(7.5,.5);
\draw [red, ultra thick](5.75,0)--(9.25,0);
\draw [blue, ultra thick](11,1).. controls (11.25,1.5) and(11.75,1.5)..(12,1);
\draw [red, ultra thick](11,1).. controls (11.25,.5) and(11.75,.5)..(12,1);
\draw [blue, ultra thick](10,1).. controls (10.25,.75) and(10.75,.5)..(11,1);
\draw [blue, ultra thick](9.25,2).. controls (9.50,1.5) and(9.75,1.25)..(10,1);
\draw [red, ultra thick](9.25,0).. controls (9.50,.5) and(9.75,.75)..(10,1);
\draw [blue, ultra thick](8.25,2).. controls (8.50,2.5) and(9,2.5)..(9.25,2);
\draw [blue, ultra thick](7.5,.5).. controls (7.75,.25) and(8,1.5)..(8.25,2);
\draw [dotted](3,0)--(3,1);
\draw [dotted](12,0)--(12,1);
\draw [dotted](7.5,0)--(7.5,2);
\draw [dotted](5.75,0)--(5.75,2);
\draw [dotted](9.25,0)--(9.25,2);
\draw [dotted](6.25,0)--(6.25,2.4);
\node at (7.5,-.2){$L/4$};
\node at (5.75,-.2){$\tilde{x}_p$};
\node at (3,-.2){$x_p$};
\node at (2.75,1){$p$};
\node at (2.75,2){$2p$};
\node at (12,-.2){$L/2-x_p$};
\node at (6.25,-.2){$x^*$};
\draw [->](3.5,.4)--(3.5,.6);
\node at (3.5,.2){$\hat{q}(x)$};
\draw [->](3.5,1.6)--(3.5,1.4);
\node at (3.5,1.7){$q(x)$};
\end{tikzpicture}
\end{center}
  \caption{The maps $x\rightarrow q(x)$ and $x\rightarrow\hat{q}(x)$ in Lemma \ref{stay-in}, $q(x^*)>2p$}
\label{fig3}
 \end{figure}

We define the competing map $\tilde{u}$ as follows. In the interval
$[x_p,\tilde{x}_p]$ we set $\tilde{u}=\hat{u}$ with $\hat{q}$ exactly
as in (\ref{compar1}) and
\[
\hat{h}(x)=h(x_p)+\int_{x_p}^x\hat{h}^\prime(s)dx,\;\;x\in[x_p,\tilde{x}_p].
\]
In the interval $(\tilde{x}_p,\frac{L}{2}-\tilde{x}_p)$ we take
\[
\tilde{u}(x,y)=\bar{u}_+(y-\hat{h}(\tilde{x}_p)).
\]
Finally in the interval $[\frac{L}{2}-\tilde{x}_p,\frac{L}{2}-x_p]$ we
set again $\tilde{u}=\hat{u}$ with $\hat{q}$ as in (\ref{compar1}) but
with
\[
\hat{h}(x)=\hat{h}(\tilde{x}_p)+\int_{\frac{L}{2}-\tilde{x}_p}^x\hat{h}^\prime(s)dx,
\;\;x\in\Bigl[\frac{L}{2}-\tilde{x}_p,\frac{L}{2}-x_p\Bigr].
\]
With these definitions $\tilde{u}$ is a continuous piece-wise smooth
map that satisfies (\ref{xp=Lxp}) and, as in the case $q(x^*)\leq 2p$,
one checks that $\tilde{u}$ has energy strictly less than $u$.  The
proof is complete.
\end{proof}

Next we show that the statement of Lemma \ref{stay-in} can be upgraded
to exponential decay. We have indeed
\begin{lemma}
  There exists a positive constants $c^*, C^*$ independent of $L\geq
  L_0$ such that
  \[
  \|v(x,\cdot)\|\leq C^*e^{-c^*x},\;\;x\in\Bigl[0,\frac{L}{4}\Bigr].
  \]
  \label{expD}
\end{lemma}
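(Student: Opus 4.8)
The plan is to reduce the statement to a scalar differential inequality for the function $x\mapsto q(x)^2=\|v(x,\cdot)\|^2$ on the interval $[x_p,\frac{L}{4}]$, where $x_p\le l_p=C_0/e_p$ is the number of (\ref{xp})--(\ref{xp-b}), and then to propagate the resulting exponential bound to all of $[0,\frac{L}{4}]$ using that $x_p$ is bounded independently of $L$. On $[x_p,\frac{L}{4}]$ Lemma~\ref{stay-in} gives $q(x)\le p$, so, after shrinking $p$ below the $q^*$ of Remark~\ref{q-suff} and below $\bar q$, the decomposition (\ref{decompositionL}) is unique with one and the same $\bar u=\bar u_{\mathrm{sg}(x)}\in\{\bar u_-,\bar u_+\}$ for all $x\in[x_p,\frac{L}{4}]$ (as in the proof of Lemma~\ref{stay-in}), while $h(x)$ and $v(x,\cdot)$ depend smoothly on $x$; together with the symmetry $q(\frac{L}{4}+s)=q(\frac{L}{4}-s)$ this makes $q^2$ a $C^2$ function near $[x_p,\frac{L}{4}]$.

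First I would compute $(q^2)''$. Differentiating $q^2=\langle v,v\rangle$ twice and using $v(x,y)=u(x,y+h(x))-\bar u(y)$, the equation $u_{xx}=-u_{yy}+W_u(u)$ and $\bar u''=W_u(\bar u)$, one obtains
\[
(q^2)''=2\|v_x\|^2+2\langle Tv,v\rangle+2\langle v,R_2(v)\rangle+4h'\langle v,u_{xy}(x,\cdot+h)\rangle+2(h')^2\langle v,\bar u''+v_{yy}\rangle,
\]
where $Tv=-v_{yy}+W_{uu}(\bar u)v$ and $|R_2(v)(y)|\le C|v(x,y)|^2$. The decisive point is that the term containing $h''$ is absent, because $\langle v(x,\cdot),u_y(x,\cdot+h(x))\rangle=\langle v,\bar u'\rangle+\langle v,v_y\rangle=0$ for every $x$, by the orthogonality in (\ref{decompositionL}) and $\langle v,v_y\rangle=\frac12[\,|v|^2\,]_{-\infty}^{+\infty}=0$ (which uses the uniform decay (\ref{vDExp-ind-L})). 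Since $\bar u_\pm$ minimizes $J_\R$ and $0$ is a simple eigenvalue of $T$ with eigenfunction $\bar u'$ (assumptions $\mathbf{h}_2$, $\mathbf{h}_3$), $T$ is non-negative and has a spectral gap, so $\langle Tv,v\rangle\ge\lambda_1\|v\|^2=\lambda_1q^2$ on $(\bar u')^\perp$ for some $\lambda_1>0$. Estimating the remaining terms by $|h'|\le C\|v_x\|\,\|v_y\|\le C\|v_x\|q^{1/2}$ (from (\ref{hprime}) and (\ref{vy})), $\|v\|_{L^\infty}\le Cq^{2/3}$ (from (\ref{linfty})), $\|u_{xy}(x,\cdot)\|\le C$ (from (\ref{DExp-ind-L})), $\|\bar u''\|,\|v_y\|\le C$, and Young's inequality, all error contributions are absorbed for $p$ small, yielding
\[
(q^2)''\ge\lambda_1\,q^2\qquad\text{on }[x_p,\tfrac{L}{4}].
\]

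Next I would record the boundary data and run a comparison argument. The brake‑orbit identity $u_x(\frac{L}{4},\cdot)=0$ gives $v_x(\frac{L}{4},\cdot)=h'(\frac{L}{4})u_y(\frac{L}{4},\cdot+h)$, hence $(q^2)'(\frac{L}{4})=2\langle v,v_x\rangle(\frac{L}{4})=2h'(\frac{L}{4})\langle v,u_y(\frac{L}{4},\cdot+h)\rangle=0$; and $q^2(x_p)\le q_1(x_p)^2=p^2$ by (\ref{xp}). Comparing $q^2$ with $\psi(x)=p^2\cosh\bigl(\sqrt{\lambda_1}(x-\tfrac{L}{4})\bigr)/\cosh\bigl(\sqrt{\lambda_1}(x_p-\tfrac{L}{4})\bigr)$, which solves $\psi''=\lambda_1\psi$, $\psi'(\tfrac{L}{4})=0$, $\psi(x_p)=p^2$, the difference $w=q^2-\psi$ satisfies $w''\ge\lambda_1w$, $w(x_p)\le0$, $w'(\tfrac{L}{4})=0$; the maximum principle for $\partial_x^2-\lambda_1$ with a Dirichlet condition at $x_p$ and a Neumann condition at $\frac{L}{4}$ forces $w\le0$, so
\[
q(x)^2\le p^2\,\frac{\cosh\bigl(\sqrt{\lambda_1}(x-\tfrac{L}{4})\bigr)}{\cosh\bigl(\sqrt{\lambda_1}(x_p-\tfrac{L}{4})\bigr)}\le 2p^2e^{-\sqrt{\lambda_1}(x-x_p)},\qquad x\in[x_p,\tfrac{L}{4}].
\]
On $[0,x_p]$ one has the crude uniform bound $q(x)=\|v(x,\cdot)\|\le C$, with $C$ independent of $x$ and $L$, because $|u(x,y)-\bar u_{\mathrm{sg}(x)}(y-h(x))|\le Ke^{-k|y|}+Ke^{-k(|y|-C_h)}$ by (\ref{Exp-ind-L}), (\ref{bar-exp}) and $|h(x)|\le C_h$ (Remark~\ref{q-suff}). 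Since $x_p\le l_p$ is bounded independently of $L$, the two estimates combine to $\|v(x,\cdot)\|\le C^*e^{-c^*x}$ on $[0,\frac{L}{4}]$ with $c^*=\frac12\sqrt{\lambda_1}$ and $C^*$ depending only on $p$, $l_p$, $\lambda_1$ and the fixed constants above.

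The step I expect to be the main obstacle is the differential inequality $(q^2)''\ge\lambda_1q^2$: it requires carefully tracking the coupling through the translation parameter $h(x)$ — exactly the extra difficulty emphasised in the introduction — and relies on the identity $\langle v,u_y(x,\cdot+h)\rangle\equiv0$, on the uniform‑in‑$L$ exponential estimates (\ref{vDExp-ind-L}), (\ref{vy}) which make $|h'|$ of the small size $q^{1/2}\|v_x\|$, and on the spectral gap of $T$ provided by $\mathbf{h}_3$ and the minimality of $\bar u_\pm$. Once this inequality is available, the boundary computation and the comparison are routine.
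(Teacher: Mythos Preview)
Your argument is correct and follows essentially the same route as the paper: derive the differential inequality $(q^2)''\ge c\,q^2$ on $[x_p,\tfrac{L}{4}]$ by expanding the second derivative, killing the $h''$ term via $\langle v,\bar u'\rangle=0$, and using the coercivity of $T$ on $(\bar u')^\perp$, then compare with the $\cosh$ solution and patch in the uniform bound on $[0,x_p]$. The only cosmetic differences are that the paper bounds $\langle Tv,v\rangle$ indirectly through the effective potential and Lemma~\ref{lemmaw-true} (yielding the constant $\mu$ in place of your $\lambda_1$), and runs the comparison with Dirichlet data at both ends of $[x_p,\tfrac{L}{2}-x_p]$ rather than Dirichlet--Neumann on $[x_p,\tfrac{L}{4}]$; by the symmetry of $q$ these are equivalent.
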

\begin{proof}
  We show that, under the standing assumption that $2p=q^*>0$ is
  sufficiently small, for $L\geq 4x_p$ it results
  \begin{equation}
    q(x)\leq\sqrt{2}pe^{-\frac{1}{2}\sqrt{\mu}(x-x_p)},\;\;x\in\Bigl[x_p,\frac{L}{4}\Bigr],
    \label{inx>xp}
  \end{equation}
  where $\mu>0$ is the constant in (\ref{W2geqq2}).  Then the lemma
  follows from \eqref{inx>xp} and \eqref{vDExp-ind-L} that implies
  $q(x)=\|v(x,\cdot)\|\leq\frac{K}{\sqrt{k}}$. To prove \eqref{inx>xp}
  we proceed as in the proof of Lemma \ref{lemmaw-true} in
  \cite{f}. We first establish the inequality
  \begin{equation}
    \frac{d^2}{dx^2}\|v(x,\cdot)\|^2\geq\mu\|v(x,\cdot)\|^2,\;\;x\in\Bigl[x_p,\frac{L}{2}-x_p\Bigr].
    \label{ElIn0}
  \end{equation}
  
  We begin by the elementary inequality
  \begin{equation}
    \begin{split}
      &\frac{d^2}{dx^2}\|v(x,\cdot)\|^2=\frac{d^2}{dx^2}\|u(x,\cdot)-\bar{u}_+(\cdot-h(x))\|^2\\
      &\geq2\Big\langle\frac{d^2}{dx^2}\Big(u(x,\cdot)-\bar{u}_+(\cdot-h(x))\Big),u(x,\cdot)-\bar{u}_+(\cdot-h(x))\Big\rangle.
    \end{split}
    \label{ElIn}
  \end{equation}
  From
  \[
  \begin{split}
    &\frac{d^2}{dx^2}\Big(u(x,\cdot)-\bar{u}_+(\cdot-h(x))\Big)\\
    &=u_{xx}(x,\cdot)-\bar{u}_+^{\prime\prime}(\cdot-h(x))(h^\prime(x))^2+\bar{u}_+^\prime(\cdot-h(x))h^{\prime\prime}(x),
  \end{split}
  \]
  and (\ref{ElIn}), using also (\ref{hprime}) (and $\langle\phi,\psi\rangle=\langle\phi(\cdot-r),\psi(\cdot-r)\rangle$), it follows
  \[
    \begin{split}
      &\frac{d^2}{dx^2}\|v(x,\cdot)\|^2\geq
      2\langle u_{xx}(x,\cdot),v(x,\cdot-h(x))\rangle\\
      &-2\langle
      \bar{u}_+^{\prime\prime},v(x,\cdot)\rangle\frac{\langle
        v_x(x,\cdot),v_y(x,\cdot)\rangle^2}{\|\bar{u}_+^\prime+v_y(x,\cdot)\|^4}
      =2I_1+2I_2.
    \end{split}
  \]
  
  Since $u$ is a solution of \eqref{elliptic} and $\bar{u}_+$ solves
  (\ref{system}) we have
  \[
  u_{xx}(x,\cdot)=W_u(u(x,\cdot))-W_u(\bar{u}_+(\cdot-h(x)))-\Big(u(x,\cdot)-\bar{u}_+(\cdot-h(x))\Big)_{yy}.
  \]
  Then, recalling the definition of the operator $T$ and that $v(x,\cdot)=u(x,\cdot+h(x))-\bar{u}_+$,
  we obtain
  \begin{equation}
    \begin{split}
      &I_1=\langle W_u(\bar{u}_++v(x,\cdot))-W_u(\bar{u}_+)-v_{yy}(x,\cdot),v(x,\cdot)\rangle\\
      &=\langle W_u(\bar{u}_++v(x,\cdot))-W_u(\bar{u}_+)-W_{uu}(\bar{u}_+)v(x,\cdot),v(x,\cdot)\rangle
      +\langle Tv(x,\cdot),v(x,\cdot)\rangle.
    \end{split}
    \label{I1}
  \end{equation}
  Now we observe that a standard computation yields 
  \[
  J_\R(u(x,\cdot))-c_0=\frac{1}{2}\langle Tv(x,\cdot),v(x,\cdot)\rangle+\int_\R f_W(x,y) dy,
  \]
  where 
  \[
  f_W=W(\bar{u}_++v)-W(\bar{u}_+)-W_u(\bar{u}_+)v
  -\frac{1}{2}W_{uu}(\bar{u}_+)v\cdot v.
  \]
  From \eqref{vy}, $q(x)=\|v(x,\cdot)\|\leq p$ and \eqref{linfty} it
  follows, with $C_W>0$ a suitable constant,
  \[
  \vert f_W(x,y)\vert\leq C_W\vert v(x,y)\vert^3\leq
  C_1C_W\|v(x,\cdot)\|^\frac{2}{3}\vert v(x,y)\vert^2,
  \]
  and therefore
  
  \begin{equation}
    \langle Tv(x,\cdot),v(x,\cdot)\rangle\geq 2(J_\R(u(x,\cdot))-c_0)-C\|v(x,\cdot)\|^\frac{8}{3}.
    \label{Tvv-W}
  \end{equation} 
  Introducing this estimate into \eqref{I1} and observing that the
  other term in the right hand side of \eqref{I1} can also be
  estimated by a constant times $\|v(x,\cdot)\|^\frac{8}{3}$ we finally
  obtain
  \[
  I_1\geq  2(J_\R(u(x,\cdot))-c_0)-C\|v(x,\cdot)\|^\frac{8}{3}.
  \]
  To estimate $I_2$ we note that from \eqref{vy} and \eqref{xenergy},
  provided $q^* = 2p$ is sufficiently small, we get
  \[
  \|v_x(x,\cdot)\|^2\leq 2\|u_x(x,\cdot)\|^2\leq
  4(J_\R(u(x,\cdot))-c_0),\;\;x\in\Bigl[x_p,\frac{L}{2}-x_p\Bigr],
  \] 
  where we have also used \eqref{ux-hamilton}. This 
  and \eqref{vy} imply
  \[
  \vert I_2\vert\leq Cp(J_\R(u(x,\cdot))-c_0),
  \]
  for some constant $C>0$ and we obtain
  \[
  (I_1+I_2)\geq(2-Cp)(J_\R(u(x,\cdot))-c_0)
  \geq\frac{1}{2}\mu\|v(x,\cdot)\|^2,\;\;x\in\Bigl[x_p,\frac{L}{2}-x_p\Bigr]
  \]
  and \eqref{ElIn0} is established.
  
  From \eqref{ElIn0} and the comparison principle we have
  \begin{equation}
    \|v(x,\cdot)\|^2\leq\varphi(x),\;\;x\in\Bigl[x_p,\frac{L}{2}-x_p\Bigr]
    \label{v<phi}
  \end{equation}
  where 
  \[
  \varphi(x)=p^2\frac{\cosh{\sqrt{\mu}(x-\frac{L}{4})}}
         {\cosh{\sqrt{\mu}(x_p-\frac{L}{4})}}
  \]
  is the solution of the problem
  \[
  \left\{\begin{array}{l}
  \varphi^{\prime\prime}=\mu\varphi,\;\;x\in[x_p,\frac{L}{2}-x_p],\\\\
  \varphi(x_p)=\varphi(\frac{L}{2}-x_p)=p^2.
  \end{array}\right.
  \]
  Then \eqref{inx>xp} follows from \eqref{v<phi} and 
  \[
  \varphi(x)\leq
  2p^2e^{-\sqrt{\mu}(x-x_p)},\;\;x\in\Bigl[x_p,\frac{L}{4}\Bigr].
  \]
  This concludes the proof.
\end{proof}

To finish the proof of Theorem \ref{periodic1} it remains to show that
there is a sequence $u^{L_j}$, $L_j\rightarrow+\infty$, that converges
to a heteroclinic connection between suitable translates of
$\bar{u}_\pm$. Indeed, once this is established, a suitable
translation $\eta$ in the $y$ direction yields the sequence
$u^{L_j}(x,y-\eta)$ and the heteroclinic $u^H$ in Theorem
\ref{periodic1}. From (\ref{C2alpha}) it follows that there exists a
subsequence, still denoted by ${L_j}$, and a classical solution
$u^\infty:\R^2\rightarrow\R^m$ of (\ref{elliptic}) such that we have
\begin{equation}
  \lim_{j\rightarrow+\infty}u^{L_j}(x,y)=u^\infty(x,y),
  \label{limLinfty}
\end{equation}
in the $C^2$ sense in compacts. Moreover $u^\infty$ satisfies the
exponential estimates \eqref{Exp-ind-L} and (\ref{DExp-ind-L}). This
implies that the convergence in (\ref{limLinfty}) is in the $C^2$
sense in any set of the form $[-\lambda,\lambda]\times\R$.  Set
$u_j=u^{L_j}$ and denote by $h_j$ and $v_j$ the functions determined
by the decomposition (\ref{decompositionL}) of $u_j$:
\begin{equation}
  \begin{split}
    &u_j(x,y)=\bar{u}_+(y-h_j(x))+v_j(x,y-h_j(x)),\\
    &\langle v_j,\bar{u}_+^\prime\rangle=0.
  \end{split}
  \label{decomLj}
\end{equation}

On the basis of Remark \ref{q-suff}, $v_j$ and its first and second
derivatives satisfy (\ref{vDExp-ind-L}). Therefore \eqref{vy} shows
that, under the standing assumption of $q^*>0$ small, we can control
the size of $\|(v_j)_y(x,\cdot)\|$ and, proceeding as in the
derivation of \eqref{vx-in-sp}, we obtain from \eqref{hprime}
\[
\vert h_j^\prime(x)\vert\leq C\|(v_j)_x(x,\cdot)\|\leq
C(J_\R(u_j(x,\cdot))-c_0)^\frac{1}{2},\;\;x\in\Bigl[l_{p},\frac{L_j}{4}\Bigr].
\]
 On the other hand from \eqref{Tvv-W} and \eqref{vy} we get 
 \[
 J_\R(u(x,\cdot))-c_0\leq C(\|v_y(x,\cdot)\|^2+\|v(x,\cdot)\|^2+\|v(x,\cdot)\|^\frac{8}{3})\leq
 C\|v(x,\cdot)\|,
 \]
 and we conclude
 \begin{equation}
 \vert h_j^\prime(x)\vert\leq C\|v_j(x,\cdot)\|^\frac{1}{2}\leq Ce^{-\frac{1}{4}\sqrt{\mu}(x-l_{p})},\;\;x\in\Bigl[l_{p},\frac{L_j}{4}\Bigr] 
 \label{hprime-in}
 \end{equation}
 where we have also used \eqref{inx>xp}.

This and the fact that, as we have seen in Remark \ref{q-suff},
$h_j(x)$ is bounded independently of $j$, imply that by passing to a
subsequence if necessary, we can assume that there is a Lipschitz
continuous and bounded map $h^\infty:[l_{p},+\infty)\rightarrow\R$
  such that
  \[
  \lim_{j\rightarrow+\infty}h_j(x)=h^\infty(x),\;\;x\in[l_{p},+\infty), 
  \]
uniformly in compacts. It follows that we can pass to the limit in
(\ref{decomLj}) and obtain in particular that there exists the limit 
$v^\infty:[l_{p},+\infty)\times\R\rightarrow\R^m$ of
  \[
  \lim_{j\rightarrow+\infty}v_j(x,y)=v^\infty(x,y),
  \]
and the convergence is in $L^2$ and in $L^\infty$ in sets of the form 
$[l_{p},l]\times\R$. The functions $h^\infty$ and $v^\infty$
coincide with the functions determined by the decomposition
(\ref{decompositionL}) of $u^\infty$. Moreover from (\ref{inx>xp}) and
\eqref{hprime-in} we have that $q^\infty(x)=\|v^\infty(x,\cdot)\|$ and
$h^\infty$ satisfy
\[
\begin{split}
&q^\infty(x)\leq C^*e^{-c^*x},\;\;x\geq 0,\\
&h^\infty(x)\leq Ce^{-\frac{1}{4}\sqrt{{\mu}}(x-l_{p})},\;\;x\geq l_{p}. 
\end{split}
\]
The first of these estimates shows that , for $x\rightarrow+\infty$,
$u^\infty(x,\cdot)$ converges in the $L^2$ sense to the manifold of
the translates of $\bar{u}_+$. The estimate for $h^\infty$ shows that
there exists $\eta=\lim_{x\rightarrow+\infty}h^\infty(x)$ and
therefore that actually, for $x\rightarrow\infty$, $u^\infty(x,\cdot)$
converges, to a specific element of that manifold. This, taking also
into account the symmetry properties of $u^\infty$ implies that indeed
$u^\infty$ is a heteroclinic solution of \eqref{elliptic} that
connects translates of $\bar{u}_\pm$.

This concludes the proof of Theorem \ref{periodic1}.

\appendix
\section{Appendix}

We present an elementary proof of Lemma~\ref{away}, that we restate as
Proposition\ref{W-distance}.

\begin{proposition}
  Assume that $W:\R^m\rightarrow\R$ is of class $C^3$, $a_\pm$ are non
  degenerate, and $u\in\mathcal{H}^1=\bar{\mathrm{u}}+H^1(\R;\R^m)$.
  
 Then, for each $p>0$ there is $e_p>0$ such that
\item
  \begin{equation}
    \|u-\bar{u}_\pm(\cdot-r)\|_1\geq q_1^u\geq {p},\;\;r\in\R.
    \label{W-distance}
  \end{equation}
  implies
  \[
  J_\R(u)-c_0\geq e_{p}.
  \]
  Moreover $e_{p}$ is continuous in $p$ and for $p\leq\|v\|_1$ small it results
  \[
    e_{p}\leq J_\R(\bar{u}+v)-c_0\leq C^1\|v\|_1^2,\;\;v\in H^1(\R;\R^m),\;\bar{u}\in\{\bar{u}_-,\bar{u}_+\},
  \]
  with $C^1>0$ a constant.
  \label{away1}
\end{proposition}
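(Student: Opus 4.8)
The plan is to prove the asserted lower bound in the equivalent form
\[
e_p:=\inf\bigl\{J_\R(u)-c_0\,:\,u\in\mathcal{H}^1,\ q_1^u\ge p\bigr\}>0,
\]
by contradiction and compactness, and then to read off the continuity of $p\mapsto e_p$ and the two-sided estimate. Since $\bar u_\pm$ are global minimizers of $J_\R$ on $\mathcal{H}^1$ we have $c_0=\inf_{\mathcal{H}^1}J_\R$; so if the claim failed there would be $p>0$ and a minimizing sequence $u_n\in\mathcal{H}^1$ with $q_1^{u_n}\ge p$ and $J_\R(u_n)\to c_0$. Note that a translation $u_n\mapsto u_n(\cdot-r_n)$ changes neither $J_\R(u_n)$ nor $q_1^{u_n}$, and that every member of $\mathcal H^1$ tends to $a_\pm$ at $\pm\infty$.

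The heart of the matter is to show that, after suitable translations, this sequence is precompact. First, $\mathbf{h}_1$ (in particular $\liminf_{|u|\to\infty}W(u)>0$), the bound $J_\R(u_n)\le c_0+1$ and the elementary inequality $J_\R(u)\ge\int_\R\sqrt{2W(u)}\,|u'|\,dy$, applied to any excursion of $u_n$ out of a large ball, give a uniform bound $\|u_n\|_{L^\infty}\le M'$. Since $W\ge w_0>0$ on the compact set $\{|z|\le M'\}\setminus\bigl(B_{r_0}(a_-)\cup B_{r_0}(a_+)\bigr)$, the uniform bound on $\int_\R W(u_n)$ forces $\{y:|u_n(y)-a_-|\ge r_0,\ |u_n(y)-a_+|\ge r_0\}$ to have measure bounded uniformly in $n$. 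Moreover, again by $J_\R(u)\ge\int_\R\sqrt{2W(u)}\,|u'|\,dy$ and $c_0=\inf_{\mathcal H^1}J_\R$, each maximal excursion of $u_n$ from one ball $B_{r_0}(a_\pm)$ to the other costs energy at least $c_0-O(r_0^2)$; as $c_0>0$ (because $a_-\ne a_+$), for $r_0$ small a map with two or more such excursions would have $J_\R(u_n)>c_0$, which is impossible for large $n$. Hence, for $n$ large, after translating so that the single excursion is centred at the origin, $u_n$ lies within $r_0$ of $a_-$ on $(-\infty,-R_0]$ and within $r_0$ of $a_+$ on $[R_0,+\infty)$, with $R_0$ independent of $n$; by $\mathbf{h}_2$ and the estimates (\ref{second-derW}) the energy carried by these two tails controls $\|u_n-a_-\|_{H^1((-\infty,-R_0))}$ and $\|u_n-a_+\|_{H^1((R_0,+\infty))}$, and since the bulk interval already carries energy at least $c_0-O(r_0^2)$, the tail contributions can be made arbitrarily small (sliding $R_0$ outward and diagonalising).

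The passage to the limit is then routine. On any fixed bounded interval $\{u_n\}$ is bounded in $H^1$, so a subsequence converges weakly in $H_{\mathrm{loc}}^1$ and locally uniformly to some $u_*$, which by the vanishing tails belongs to $\mathcal{H}^1$ and, by lower semicontinuity, satisfies $J_\R(u_*)\le\liminf J_\R(u_n)=c_0$; thus $u_*$ is a global minimizer of $J_\R$, so by uniqueness modulo translation $u_*=\bar u_\sigma(\cdot-\bar r)$ for some $\sigma\in\{-,+\}$, $\bar r\in\R$. Since in addition $J_\R(u_n)\to J_\R(u_*)$, the $L^2$-norms of $u_n'$ and the integrals $\int_\R W(u_n)$ converge to those of $u_*$, which together with the uniform smallness of the tails upgrades the convergence to $u_n\to u_*$ strongly in $H^1(\R;\R^m)$. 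Then $q_1^{u_n}\le\|u_n-\bar u_\sigma(\cdot-\bar r)\|_1\to0$, contradicting $q_1^{u_n}\ge p$; hence $e_p>0$, which is the asserted lower bound. The continuity of $p\mapsto e_p$ follows from its monotonicity (the admissible set shrinks as $p$ increases) together with the same compactness: a jump at some $p_0$ is ruled out by applying the argument above to near-optimal sequences with $q_1^{u_n}\ge p_n\to p_0$, whose limit $u_*$ satisfies $q_1^{u_*}\ge p_0$ and $J_\R(u_*)-c_0\le\lim_{p\to p_0}e_p$.

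For the two-sided estimate, fix small $p>0$ and choose $v\in H^1(\R;\R^m)$ with $\langle v,\bar u'\rangle_1=0$ and $\|v\|_1=p$; by Lemma~\ref{lemmaw} the orthogonality (\ref{orthogonal}) then holds for $\bar u+v$, so $q_1^{\bar u+v}=\|v\|_1=p$ and $\bar u+v$ is admissible for $e_p$, whence $e_p\le J_\R(\bar u+v)-c_0$. Expanding $J_\R$ to second order at the critical point $\bar u$ — using that its first variation vanishes, that $|\langle Tv,v\rangle|\le C\|v\|_1^2$, and that $W\in C^3$ makes the remainder cubic, $O(\|v\|_{L^3}^3)=O(\|v\|_1^3)$ — gives $J_\R(\bar u+v)-c_0\le C^1\|v\|_1^2$ for $\|v\|_1$ small; this yields the displayed inequality and shows $e_p\to0$ as $p\to0^+$. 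The main obstacle is the compactness step above: excluding loss of compactness of the minimizing sequence on the noncompact line — no energy splitting into a second transition and no mass drifting along the wells — and extracting the uniform decay at $\pm\infty$ needed both to place the weak limit in $\mathcal{H}^1$ and to promote weak to strong $H^1$ convergence. It is precisely here that the non-degeneracy $\mathbf{h}_2$ of $a_\pm$ and the uniqueness of $\bar u_\pm$ modulo translation enter in an essential way.
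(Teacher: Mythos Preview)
Your argument is sound and parallels the paper's up to the endgame, where it diverges. Both proofs argue by contradiction, localizing (after translation) a sequence $u_n$ with $q_1^{u_n}\ge p$ and $J_\R(u_n)\to c_0$, and showing it subconverges to a translate of $\bar u_\pm$. The paper then extracts the contradiction by an explicit energy computation on the tails $(l,\infty)$ and $(-\infty,-l)$ (the chain \eqref{J-J}), which controls only $\int|v_j|^2$; this is why it first proves the result under the $L^2$ condition \eqref{l2-distance} and afterwards reduces the $H^1$ statement to it via a dichotomy (either $\|u-\bar u\|\ge\tilde p$, or $\|u'-\bar u'\|$ is large and a direct Taylor estimate applies). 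You instead upgrade weak $H^1_{\mathrm{loc}}$ convergence to strong $H^1(\R)$ convergence, using that $J_\R(u_n)\to J_\R(u_*)$ forces convergence of each term by lower semicontinuity, so that $q_1^{u_n}\to 0$ directly contradicts $q_1^{u_n}\ge p$. Your route avoids the $L^2/H^1$ split and is more conceptual; the paper's is more hands-on and yields explicit constants, which is presumably what ``elementary'' refers to.

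One step is not justified as written. Ruling out multiple \emph{crossings} between $B_{r_0}(a_-)$ and $B_{r_0}(a_+)$ does not force $|u_n-a_\pm|<r_0$ on the half-lines $|y|\ge R_0$ with $R_0$ uniform: out-and-back excursions from one well that do not reach the other are not excluded by the energy count, and nothing prevents them from sitting far out. Fortunately you do not need this confinement. After translation the crossing argument gives only that $u_n$ avoids the \emph{opposite} ball on each half-line; together with $u_*'\in L^2(\R)$ and $\int W(u_*)<\infty$ (both from lower semicontinuity on exhausting intervals) this already places $u_*\in\mathcal H^1$ with the correct limits at $\pm\infty$. For the strong $L^2$ tail control, use $\int_\R W(u_n)\to\int_\R W(u_*)$ together with $\int_{-R}^R W(u_n)\to\int_{-R}^R W(u_*)$ to get $\int_{|y|>R}W(u_n)\to\int_{|y|>R}W(u_*)$; then the quadratic lower bound on $W$ near $a_\pm$ and the uniform $L^\infty$ bound on the (small-measure) residual set outside both balls bound $\|u_n-a_\pm\|_{L^2}$ on the tails. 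Your continuity sketch has a related circularity --- the compactness you invoke used $J_\R(u_n)\to c_0$, which near-optimal sequences at level $e_{p_0^-}$ need not satisfy --- but the paper's own treatment of this point is equally terse, and only $e_p\to 0$ as $p\to 0$ is actually used later, which your upper bound already gives.
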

\begin{proof}
If $u$ satisfies (\ref{W-distance}) and has $J_\R(u)\geq 2c_0$ we can take $e_p=c_0$. It follows that in the proof we can assume
\begin{equation}
J_\R(u)<2c_0.
\label{E-bounded}
\end{equation}
Note also that $u\in\mathcal{H}^1$ implies
\begin{equation}
  \lim_{s\rightarrow\pm\infty}u(s)=a_\pm.
  \label{ConVerge}
\end{equation}
and  set
\begin{equation}
  q_0=\min\Bigl\{r_0,\frac{\gamma^2}{8C_W}\Bigr\},\quad 
  \label{q}
\end{equation}
where 
 $r_0$ and $\gamma$ are the constants in (\ref{second-derW})
  and 
 \[C_W=\max\{\vert W_{uuu}(a_\pm+z)\vert: \vert z\vert\leq3r_0\}.\]
Given $q\in(0,q_0)$ define
\[
\begin{split}
  &J_z^+(q)=\min_{v\in\mathcal{V}_z^+(q)} J(v),\\
  &\mathcal{V}_z^+(q)=\{v\in H^1_{loc}((0,\tau^v);\R^m): v(0)=z, \vert z-a_+\vert=q,\;\lim_{s\rightarrow\tau^v}v(s)=a_+\},\\
  &J^-(q)=\min_{v\in\mathcal{V}^-(q)} J(v),\\
  &\mathcal{V}^-(q)=\{v\in H^1_{loc}((0,\tau^v);\R^m):\vert v(0)-a_+\vert=q,\;\lim_{s\rightarrow\tau^v}v(s)=a_-\},\\
  &J_0(q)=\min_{v\in\mathcal{V}_0(q)} J(v),\\
  &\mathcal{V}_0(q)=\{v\in H^1((0,\tau^v);\R^m):\vert v(0)-a_+\vert=q_0,\;\vert v(\tau^v)-a_+\vert=q\}.
\end{split}
\]
Observe that there exists a positive functions $\psi:(0,q_0)\rightarrow\R$ that converges to zero with $q$ and
 satisfies
\[
 J_z^+(q)\leq\psi(q).
\]
Note also that $J_\R(\bar{u}_\pm)=c_0$ and the minimality of $\bar{u}_\pm$ imply $J^-(q)+\psi(q)\geq c_0$ and therefore we have
\begin{equation}
c_0-\psi(q)\leq J^-(q).
\label{J-bound}
\end{equation}
For $u\in\mathcal{H}^1$ define
\[
\begin{split}
  &s^{u,-}(\rho)=\max\{s:\vert u(t)-a_-\vert\leq\rho,\;\text{ for }\;t\leq s\},\\
  &s^{u,+}(\rho)=\min\{s:\vert u(t)-a_+\vert\leq\rho,\;\text{ for }\;t\geq s\}.
\end{split}
\]
Since $\psi(q)\rightarrow 0$ as $q\rightarrow 0$ while
$\lim_{q\rightarrow 0}J_0(q)=\mathsf{J}_0$, $\mathsf{J}_0$ a positive
constant, we can fix $q=q(q_0)$ in such a way that
\begin{equation}
2J_0(q(q_0))-\psi(q(q_0))\geq \mathsf{J}_0.
\label{qdef}
\end{equation}
We claim that in this proposition it suffices to consider only maps
that satisfy the condition
\begin{equation}
s^{u,+}(q_0)-s^{u,-}(q_0)\leq\frac{2c_0}{W_m(q(q_0))},
\label{s+-bounds}
\end{equation}
where $W_m(t)=\min_{a\in\{a_-,a_+\},\vert z\vert\geq t}W(a+z)$.
To see this set
\[
\begin{split}
&\bar{s}^{u,-}=\max\{s:\vert u(s)-a_-\vert=q(q_0)\},\\
&\bar{s}^{u,+}=\min\{s:\vert u(s)-a_+\vert=q(q_0)\},
\end{split}
\]
and observe that the definition of $\bar{s}^{u,\pm}$ implies $\vert u(s)-a_\pm\vert>q(q_0)$, for $s\in(\bar{s}^{u,-},\bar{s}^{u,+})$. It follows 
\begin{equation}
  (\bar{s}^{u,+}-\bar{s}^{u,-})W_m(q(q_0))\leq 2c_0.
  \label{bars-bound}
\end{equation}
Assume first that
\begin{equation}
  \begin{split}
    &\vert u(s)-a_-\vert<q_0,\;\text{ for }\;s\in(-\infty,\bar{s}^{u,-}),\\
    &\vert u(s)-a_+\vert<q_0,\;\text{ for }\;s\in(\bar{s}^{u,+},+\infty).
  \end{split}
  \label{inside}
\end{equation}
In this case we have
\[
\bar{s}^{u,-}<s^{u,-}(q_0)<s^{u,+}(q_0)<\bar{s}^{u,+},
\]
that together with \eqref{bars-bound} implies \eqref{s+-bounds}.  Now
assume that \eqref{inside} does not hold and there exists
$s^*\in(\bar{s}^{u,+},+\infty)$ such that $\vert u(s^*)-a_+\vert=q_0$
(or $s^*\in(-\infty,\bar{s}^{u,-})$ such that $\vert
u(s^*)-a_-\vert=q_0$). For definiteness we consider the first
eventuality, the other possibility is discussed in a similar way. To
estimate the energy of $u$ we focus on the intervals
$(-\infty,\bar{s}^{u,+})$, $(\bar{s}^{u,+},s^{u,+}(q(q_0)))$, and
$(s^{u,+}(q(q_0)),+\infty)$. We have
$J_{(-\infty,\bar{s}^{u,+})}(u)\geq J^-(q(q_0))$ and since
$s^*\in(\bar{s}^{u,+},s^{u,+}(q(q_0)))$ we also have
$J_{(\bar{s}^{u,+},s^{u,+}(q(q_0)))}(u)\geq 2J_0(q(q_0))$. This,
\eqref{J-bound} and \eqref{qdef} imply
\[
\begin{split}
&J_\R(u)\geq J_{(-\infty,\bar{s}^{u,+})}(u)+ J_{(\bar{s}^{u,+},s^{u,+}(q(q_0)))}(u)
\geq J^-(q(q_0))+2J_0(q(q_0))\\
&\geq c_0-\psi(q(q_0))+2J_0(q(q_0))\geq c_0+\mathsf{J}_0.
\end{split}
\]
This completes the proof of the claim. Indeed this computation shows
that, if $s^*$ with the above properties exists, then we can take
$e_p=\mathsf{J}_0$.

Since $J_\R$ is translation invariant we can also restrict ourselves
to the set of the maps that satisfy
\begin{equation}
  -s^{u,-}(q(q_0))=s^{u,+}(q(q_0))\leq\frac{c_0}{W_m(q(q_0))}.
  \label{+=-}
\end{equation}
and assume that also $\bar{u}_\pm$ satisfy \eqref{+=-}.  We remark
that the set of maps that satisfy \eqref{E-bounded} and
\eqref{s+-bounds} is equibounded and equicontinuous. Indeed
\eqref{E-bounded} implies
\[
  \vert u(s_1)-u(s_2)\vert\leq\sqrt{2c_0}\vert s_1-s_2\vert^\frac{1}{2},
\]
which together with \eqref{s+-bounds} yield
\[
  \vert u(s)\vert\leq M_0:=\vert a_-\vert +3q_0+\sqrt{2c_0}\Bigl(\frac{2c_0}{W_m(q(q_0))}\Bigr)^\frac{1}{2}.
\]

We first prove the proposition with (\ref{W-distance}) replaced by
\item
  \begin{equation}
    \|u-\bar{u}_\pm(\cdot-r)\|\geq q^u\geq {p},\;\;r\in\R.
    \label{l2-distance}
  \end{equation}
  
Assume the proposition is false. Then there is a sequence
$\{u_j\}\subset\mathcal{H}^1$ that satisfies (\ref{ConVerge}) and
\item
  \[
\begin{split}
&\lim_{j\rightarrow+\infty}J_\R(u_j)=c_0,\\
&\|u_j-\bar{u}_\pm(\cdot-r)\|\geq {p},\;\;r\in\R.
\end{split}
\]
Since the sequence $\{u_j\}$ is equibounded and equicontinuous there
is a subsequence, still labeled $\{u_j\}$ and a continuous map
$\bar{u}:\R\rightarrow\R^m$ such that
\[
\lim_{j\rightarrow+\infty}u_j(s)=\bar{u}(s),
\]
uniformly in compact sets. From $\int_\R\vert u_j^\prime\vert^2<4c_0$
and the fact that $u_j$ is uniformly bounded, by passing to a further
subsequence if necessary, we have that $u_j$ converges to $\bar{u}$
weakly in $H_{\mathrm{loc}}^1(\R;\R^m)$. A standard argument then
shows that
\[
J_\R(\bar{u})=c_0,
\]
and therefore, by the assumption that $\bar{u}_\pm$ and their
translates are the only minimizers of $J_\R$, we conclude that
$\bar{u}$ coincides either with $\bar{u}_-(\cdot-r)$ or with
$\bar{u}_+(\cdot-r)$ with $\vert r\vert\leq\lambda_0$ where
$\lambda_0$ is determined by the condition that $\bar{u}$ satisfies
\eqref{+=-}.
\vskip.2cm Since $\lambda_0$ is fixed, from \eqref{bar-exp} it follows
that we can assume $|\bar{u(s)} - a_+|\leq Ke^{-ks}$ for $s>0$.  Fix a
number $l>\lambda_0$ such that
 \begin{equation}
 Ke^{-kl}\leq q_0,\quad\text{and}\quad \frac{K}{C_W}e^{-kl}\leq \frac{p^2}{8},
 \label{l}
 \end{equation}
 and observe that $\bar{u}$ restricted to the interval $[-l,l]$ is a
 minimizer of $J_{(-l,l)}(u)$ in the class of $u$ that satisfy $u(\pm
 l)=\bar{u}(\pm l)$. From this observation it follows
\begin{equation}
J_{(-l,l)}(u_j)\geq J_{(-l,l)}(\bar{u})-Cl\delta_j,
\label{in-l+l}
\end{equation}
where $C>0$ is a constant and $\delta_j=\max_\pm\vert u_j(\pm
l)-\bar{u}(\pm l)\vert$.

From the properties of $u$ and (\ref{bar-exp}) we have 
\begin{equation}\label{vj-small}
\vert u_j(s)-\bar{u}(s)\vert\leq\vert u_j(s)-a_+\vert+\vert\bar{u}(s)-a_+\vert\leq q_0+Ke^{-kl}\leq 2q_0,
\;\,\text{ for }\;s\geq l.
\end{equation}
We estimate the differences $J_{(-\infty,-l)}(u_j)-J_{(-\infty,-l)}(\bar{u})$ and $J_{(l,+\infty)}(u_j)-J_{(l,+\infty)}(\bar{u})$.
We have with $u_j=\bar{u}+v_j$
\begin{equation}
\begin{split}
& J_{(l,+\infty)}(u_j)-J_{(l,+\infty)}(\bar{u})
=\int_l^{+\infty}\Big(\bar{u}^\prime\cdot v_j^\prime+\frac{1}{2}\vert v_j^\prime\vert^2+W(\bar{u}+v_j)-W(\bar{u})\Big)ds\\
&=-\bar{u}^\prime(l)\cdot v_j(l)+\int_l^{+\infty}\Big(-\bar{u}^{\prime\prime}\cdot v_j+\frac{1}{2}\vert v_j^\prime\vert^2+W(\bar{u}+v_j)-W(\bar{u})\Big)ds\\
&=-\bar{u}^\prime(l)\cdot v_j(l)+\int_l^{+\infty}\Big(\frac{1}{2}\vert v_j^\prime\vert^2+W(\bar{u}+v_j)-W(\bar{u})-W_u(\bar{u})\cdot v_j\Big)ds\\
&\geq-2q_0Ke^{-kl}
+\int_l^{+\infty}\Big(\frac{1}{2}(\vert v_j^\prime\vert^2+W_{uu}(\bar{u})v_j\cdot v_j)\\
&\quad\quad+W(\bar{u}+v_j)-W(\bar{u})-W_u(\bar{u})\cdot v_j-\frac{1}{2}W_{uu}(\bar{u})v_j\cdot v_j\Big)ds
\end{split}
\label{J-J}
\end{equation}
Set $I(v_j)=W(\bar{u}+v_j)-W(\bar{u})-W_u(\bar{u})\cdot v_j-\frac{1}{2}W_{uu}(\bar{u})v_j\cdot v_j$. Then we have
\[\begin{split}
&I(v_j)=\int_0^1\int_0^1\int_0^1\rho^2\sigma W_{uuu}(\bar{u}+\rho\sigma\tau v_j)(v_j,v_j,v_j)d\tau d\sigma d\rho\\
&=\int_0^1\int_0^1\int_0^1\rho^2\sigma W_{uuu}(a_++(\bar{u}-a_+)+\rho\sigma\tau v_j)(v_j,v_j,v_j)d\tau d\sigma d\rho.
\end{split}\]
It follows $\vert I(v_j)\vert\leq 2q_0C_W\vert v_j\vert^2$. This and \eqref{J-J} imply 
\[
\begin{split}
& J_{(l,+\infty)}(u_j)-J_{(l,+\infty)}(\bar{u})\\
&\geq-2q_0Ke^{-kl}+\int_l^\infty\frac{1}{2}(\vert v_j^\prime\vert^2+\gamma^2\vert v_j\vert^2)ds-2q_0C_W\int_l^\infty\vert v_j\vert^2 ds\\
&\geq-\frac{\gamma^2}{4C_W}Ke^{-kl}+\frac{1}{4}\gamma^2\int_l^\infty\vert v_j\vert^2ds\\
&\geq-\gamma^2\frac{p^2}{32}+\frac{1}{4}\gamma^2\int_l^\infty\vert v_j\vert^2ds,
\end{split}
\]
where we have used (\ref{q}) and \eqref{l}.
From this, the analogous estimate valid in the interval $(-\infty,-l)$, and (\ref{in-l+l}) we obtain
\begin{equation}
\begin{split}
&0=\lim_{j\rightarrow+\infty}(J_\R(\bar{u}+v_j)-c_0)\\
&\geq\lim_{j\rightarrow+\infty}\Big(-Cl\delta_j-\gamma^2\frac{p^2}{16}
+\frac{1}{4}\gamma^2\Big(\int_{-\infty}^{-l}\vert v_j\vert^2ds+\int_l^\infty\vert v_j\vert^2ds\Big)\Big).
\end{split}
\label{lim}
\end{equation}
Since $v_j$ converges to $0$ uniformly in $[-l,l]$, for $j$ large we have
\[\int_{-l}^l\vert v_j\vert^2\leq\frac{p^2}{2}\]
and therefore from \eqref{l2-distance}
\[\int_{-\infty}^{-l}\vert v_j\vert^2ds+\int_l^\infty\vert v_j\vert^2ds\geq\frac{p^2}{2}.\]
This and  $\eqref{lim}$ imply
\[
\begin{split}
&0=\lim_{j\rightarrow+\infty}(J_\R(\bar{u}+v_j)-c_0)\\
&\geq\lim_{j\rightarrow+\infty}\Big(-Cl\delta_j-\gamma^2\frac{p^2}{16}
+\gamma^2\frac{p^2}{8}\Big)=\gamma^2\frac{p^2}{16}.
\end{split}
\]

This contradiction concludes the proof of the proposition when
(\ref{W-distance}) is replaced by (\ref{l2-distance}).  To complete
the proof we note that it suffices to consider the case $p\leq
2(2+\sqrt{2})\sqrt{c_0}=:2p_0$. Indeed \eqref{E-bounded} implies
$\|u^\prime\|\leq2\sqrt{c_0}$ that together with
$\|\bar{u}_\pm^\prime\|\leq\sqrt{2c_0}$ yields
\begin{equation}
\|u^\prime-\bar{u}_\pm^\prime(\cdot-r)\|\leq p_0,\;\;r\in\R.
\label{vprime<}
\end{equation}
It follows that $p > 2p_0$ implies
$\|u-\bar{u}_\pm^\prime(\cdot-r)\| > p_0$ and the existence of $e_p$
follows from the first part of the proof.

Set
\[
C_W^0=\max\{\vert W_{uu}(\bar{u}_\pm(s)+z)\vert:s\in\R,\;\vert z\vert\leq 2p_0\},
\]
and define $\tilde{p}=\tilde{p}(p)$ by
\[
\tilde{p}(p)=\frac{p}{\sqrt{2(1+C_W^0)}}.
\]
We distinguish the following alternatives:

a) $\|u-\bar{u}_\pm(\cdot-r)\|\geq\tilde{p},\;\text{ for }\;r\in\R$,
\vskip.2cm
b) there exists $\bar{r}\in\R$ and $\bar{u}\in\{\bar{u}_-,\bar{u}_+\}$ such that
\begin{equation}
\|u-\bar{u}(\cdot-\bar{r})\|< \tilde{p}.
\label{l2-small}
\end{equation}
In case a) the proposition is true from the first part of the proof
with $e_p=e_{\tilde{p}}$.
\vskip.2cm
Case b). From  (\ref{W-distance}) and (\ref{l2-small}) it follows
\begin{equation}
\|u^\prime-\bar{u}^\prime(\cdot-\bar{r})\|^2>p^2-\tilde{p}^2.
\label{W1>}
\end{equation}
For simplicity we write $\bar{u}$ instead of $\bar{u}(\cdot-\bar{r})$
and set $v=u-\bar{u}$. Note that from \eqref{vprime<},
\eqref{l2-small} and $\tilde{p}\leq p_0$ it follows
\[
\vert v(s)\vert^2\leq2\int_{-\infty}^s\vert v(s)\vert\vert
v^\prime(s)\vert ds\leq2\|v\|\|v^\prime\|\leq4p_0^2.
\]

We compute

\begin{equation}
J(u)-c_0=\frac{1}{2}\|v^\prime\|^2+\int_\R\int_0^1\Big(W_u(\bar{u}+\tau
v)-W_u(\bar{u})\Big)vd\tau ds.
\label{jW11>}
\end{equation}
Since
\begin{equation}
\Bigl\vert \int_0^1\Big(W_u(\bar{u}+\tau
v)-W_u(\bar{u})\Big)vd\tau\Bigr\vert\leq\frac{1}{2}C_W^0\vert v\vert^2,
\label{>}
\end{equation}
 we have from (\ref{W1>}) and (\ref{jW11>})
\[
J(u)-c_0\geq\frac{1}{2}(p^2-\bar{p}^2)-\frac{1}{2}C_W^0\bar{p}^2=\frac{1}{4}p^2.
\]
This concludes the first part of the lemma. The last statement is a
consequence of the fact that $J_\R(u)$ is continuous in
$\mathcal{H}^1$ and of (\ref{jW11>}) and (\ref{>}).
\end{proof}

\bibliographystyle{plain}

\end{document}